\newcommand{\ud}{\mathrm{d}}
\newcommand{\ska}[1]{\big\langle#1\big\rangle}
\newcommand{\invv}{\tilde{v}}
\newcommand{\normiii}[1]{{\left\vert\kern-0.25ex\left\vert\kern-0.25ex\left\vert #1
    \right\vert\kern-0.25ex\right\vert\kern-0.25ex\right\vert}}
\newcommand{\f}{\varphi}
\newcommand{\di}[1]{#1\nobreakdash-\hspace{0pt}dimensional}
\newcommand{\msp[1]}[1]{\mspace{#1mu}}
\newcommand{\R}[1][n+1]{{\protect\mathbb R}^{#1}}
\newcommand{\N}{{\protect\mathbb N}}
\newcommand{\eR}{\stackrel{\lower1ex \hbox{\rule{6.5pt}{0.5pt}}}{\msp[3]\R[]}}
\newcommand{\eN}{\stackrel{\lower1ex \hbox{\rule{6.5pt}{0.5pt}}}{\msp[1]\N}}
\newcommand{\eO}{\stackrel{\lower1ex \hbox{\rule{6pt}{0.5pt}}}{\msc O}}
\newcommand{\uu}{\cup}
\newcommand{\uuu}{\bigcup}
\newcommand{\uud}{ \stackrel{\lower 1ex \hbox {.}}{\uu}}
\newcommand{\uuud}[1]{ \stackrel{\lower 1ex \hbox {.}}{\uuu_{#1}}}
\newcommand{\sminus}[1][28]{\raise 0.#1ex\hbox{$\scriptstyle\setminus$}}
\newcommand{\abs}[1]{\lvert#1\rvert}
\newcommand{\norm}[1]{\lVert#1\rVert}
\newcommand{\msc}{\protect\mathscr}
\providecommand{\bysame}{\makebox[3em]{\hrulefill}\thinspace}
\newcommand{\bt}{\begin{thm}}
\newcommand{\bl}{\begin{lem}}
\newcommand{\bc}{\begin{cor}}
\newcommand{\bd}{\begin{definition}}
\newcommand{\bpp}{\begin{prop}}
\newcommand{\br}{\begin{rem}}
\newcommand{\bn}{\begin{note}}
\newcommand{\be}{\begin{ex}}
\newcommand{\bes}{\begin{exs}}
\newcommand{\bb}{\begin{example}}
\newcommand{\bbs}{\begin{examples}}
\newcommand{\ba}{\begin{axiom}}
\newcommand{\bas}{\begin{assumption}}
\newcommand{\et}{\end{thm}}
\newcommand{\el}{\end{lem}}
\newcommand{\ec}{\end{cor}}
\newcommand{\ed}{\end{definition}}
\newcommand{\epp}{\end{prop}}
\newcommand{\er}{\end{rem}}
\newcommand{\en}{\end{note}}
\newcommand{\ee}{\end{ex}}
\newcommand{\ees}{\end{exs}}
\newcommand{\eb}{\end{example}}
\newcommand{\ebs}{\end{examples}}
\newcommand{\ea}{\end{axiom}}
\newcommand{\eas}{\end{assumption}}
\newcommand{\bp}{\begin{proof}}
\newcommand{\ep}{\end{proof}}
\newcommand{\eps}{\renewcommand{\qed}{}\end{proof}}
\newcommand{\bal}{\begin{align}}
\newcommand{\bi}[1][1.]{\begin{enumerate}[\upshape #1]}
\newcommand{\bia}[1][(1)]{\begin{enumerate}[\upshape #1]}
\newcommand{\bin}[1][1]{\begin{enumerate}[\upshape\bfseries #1]}
\newcommand{\bir}[1][(i)]{\begin{enumerate}[\upshape #1]}
\newcommand{\bic}[1][(i)]{\begin{enumerate}[\upshape\hspace{2\cma}#1]}
\newcommand{\bis}[2][1.]{\begin{enumerate}[\upshape\hspace{#2\parindent}#1]}
\newcommand{\ei}{\end{enumerate}}
\newcommand\ndots{\raise 0.47ex \hbox {,}\hskip0.06em\cdots %
     \raise 0.47ex \hbox {,}\hskip0.06em}
\newcommand\nd{\noindent}
\newskip\Csmallskipamount
\newskip\Cmedskipamount
\newskip\Cbigskipamount
\newcommand\cvs{\vspace\Csmallskipamount}
\newcommand\cvm{\vspace\Cmedskipamount}
\newskip\csa
\newskip\cma
\newskip\cba
\newdimen\spt
\newcommand\citem{\cvs\advance\itemno by
1{(\romannumeral\the\itemno})\hskip3pt}
\newcommand{\bitem}{\cvm\nd\advance\itemno by
1{\bf\the\itemno}\hspace{\cma}}
\newskip\thmskip
\newskip\hsk
\newenvironment{hinw}{\labelsep=0pt\begin{list}{}{\labelsep=0pt\itemindent=0pt\labelwidth=0pt\leftmargin=\parindent\rightmargin=0pt\partopsep=\cba}%
\item\it\nopagebreak\nopagebreak}%
{\end{list}}
\newcommand\bh{\begin{hinw}}
\newcommand{\eh}{\end{hinw}}
\newtheoremstyle{normal}
  {\cba}
  {\cba}
  {}
  {\thmskip}
  {\bfseries}
  {.}
  {\hsk}
  {}
\newtheoremstyle{abschnitt}
  {\cba}
  {\cba}
  {}
  {\thmskip}
  {\bfseries}
  {.}
  {\hsk}
  {}
\newtheoremstyle{italic}
  {\cba}
  {\cba}
  {\itshape}
  {\thmskip}
  {\bfseries}
  {.}
  {\hsk}
  {}
\newtheoremstyle{aufgaben}
  {\cba}
  {\cba}
  {}
  {}
  {\normalsize\bfseries}
  {.}
  {\hsk}
  {}
\newtheoremstyle{break}
  {\cba}
  {\cba}
  {\itshape}
  {}
  {\bfseries}
  {.}
  {\newline}
  {}
\theoremstyle{italic}
\newtheorem{thm}[subsection]{Theorem}
\newtheorem{lem}[subsection]{Lemma}
\newtheorem{prop}[subsection]{Proposition}
\newtheorem{cor}[subsection]{Corollary}
\theoremstyle{normal}
\newtheorem{rem}[subsection]{Remark}
\newtheorem{definition}[subsection]{Definition}
\newtheorem{example}[subsection]{Example}
\newtheorem{examples}[subsection]{Examples}
\newtheorem{ex}[subsection]{Exercise}
\newtheorem{note}[subsection]{}
\newtheorem{axiom}[subsection]{Axiom}
\newtheorem{assumption}[subsection]{Assumption}
\theoremstyle{aufgaben}
\newtheorem{exs}[subsection]{Exercises}
\numberwithin{equation}{section}
\numberwithin{figure}{section}
\newenvironment{textequation}[1][0.8]
{\begin{equation}
\begin{aligned}
\begin{minipage}{#1\linewidth}}
{\end{minipage}
\end{aligned}
\end{equation}
\ignorespacesafterend}
\newcommand{\btext}{\begin{textequation}}
\newcommand{\etext}{\end{textequation}}
\def\hinweis{\@startsection{subsection}{2}%
 \z@{0.7\linespacing\@plus 0.5\linespacing}{0.7\linespacing}%
{\normalfont\itshape\indent}}
\newcounter{hours}\newcounter{minutes}
\newcommand{\printtime}{%
\setcounter{hours}{\time/60}%
\setcounter{minutes}{\time-\value{hours}*60}%
\ifthenelse{\value{minutes}<10}{\thehours :0\theminutes}{\thehours:\theminutes}}
\newcommand{\ann}[1]{\renewcommand{\@makefnmark}{\mbox{$^{\color{red}{\@thefnmark}}$}}%
\footnote {#1}}
\newlength{\oddsidemarginlength}
\newlength{\topmarginlength}
\newcounter{numberoflines}
\newcounter{tempcc}
\begin{document}

\flushbottom


\title{Dual flows in hyperbolic space and de Sitter space}

\author{Hao Yu}
\address{Ruprecht-Karls-Universit\"at, Institut f\"ur Angewandte Mathematik,
Im Neuenheimer Feld 294, 69120 Heidelberg, Germany}
\email{\href{mailto:h.yu@stud.uni-heidelberg.de}{h.yu@stud.uni-heidelberg.de}}
\thanks{This work has been supported by the Deutsche Forschungsgemeinschaft.}

%
\subjclass[2000]{35J60, 53C21, 53C44, 53C50, 58J05}
\keywords{curvature flows, inverse curvature flows, hyperbolic space, de Sitter space, dual flows.}
\date{\today}
%


\begin{abstract}
We consider contracting flows in \di{($n+1$)} hyperbolic space and expanding flows in \di{($n+1$)} de Sitter space.
When the flow hypersurfaces are strictly convex we relate the contracting hypersurfaces and the expanding hypersurfaces by the Gau{\ss} map.
The contracting hypersurfaces shrink to a point $x_0$ in finite time while the expanding hypersurfaces converge to the maximal slice $\{ \tau =0\}$.
After rescaling, by the same scale factor, the resclaed contracting hypersurfaces converge to a unit geodesic sphere,
while the rescaled expanding hypersufaces converge to slice $\{ \tau = -1\}$ exponential fast in $C^\infty(\mathbb{S}^n)$.
\end{abstract}

\maketitle

\tableofcontents

\setcounter{section}{0}
\section{Introduction}
In a recent paper \cite{Ger13} a pair of dual flows was considered in $\mathbb{S}^{n+1}$. The one flow is the contracting flow
\begin{equation}
\label{direct-flow-Sn+1}
\dot{x} = -F \nu,
\end{equation}
while the other is an expanding flow
\begin{equation}
\label{inverse-flow-Sn+1}
\dot{x} = \tilde{F}^{-1} \nu ,
\end{equation}
where $F \in C^{\infty}(\varGamma_+)$ and $\tilde{F}$ is its inverse
\begin{equation}
\label{inverse-F}
\tilde{F} (\kappa_i) = \frac{1}{F(\kappa_i^{-1})}.
\end{equation}
There is a Gau{\ss} map for the pair $(\mathbb{S}^{n+1}, \mathbb{S}^{n+1})$, which maps closed, strictly convex hypersurfaces $M$ to their polar
sets $\tilde{M}$, cf. \cite[Chapter 9]{Ger06}. Gerhardt \cite{Ger13} proved, that the flow hypersurfaces of (\ref{direct-flow-Sn+1})
and (\ref{inverse-flow-Sn+1}) are polar sets of each other, if the initial hypersurface have this property. Under the assumption that $F$ is symmetric,
monotone, positive, homogeneous of degree 1, $F$ strictly concave (cf. \ref{definition-strictly-concavity}) and $\tilde{F}$ concave,
it is proved in \cite{Ger13} that the contracting flows contract to a round point and the expanding flows converge to an equator
such that after appropriate rescaling, both flows converge to a geodesic sphere exponential fast.

The Gau{\ss} map exists also for the pair $(\mathbb{H}^{n+1}, N)$, where $\mathbb{H}^{n+1}$ is the \di{$(n+1)$} hyperbolic space and $N$ is
the \di{$(n+1)$} de Sitter space, cf. \cite[Chapter 10]{Ger06}. We prove in this work similar results as in \cite{Ger13} by using this
duality.
Let $M(t)$ resp. $\tilde{M}(t)$ be solutions of the contracting flows
\begin{equation}
\label{direct-flow-Hn+1}
\dot{x} = -F \nu
\end{equation}
in $\mathbb{H}^{n+1}$ resp. the dual flows
\begin{equation}
\label{inverse-flow-N}
\dot{x} = - \tilde{F}^{-1} \nu
\end{equation}
in $N$, where $\tilde{F}$ is the inverse of $F$ defined by (\ref{inverse-F}).
We impose the following assumptions.
\begin{assumption}
\label{general-assmption}
Let $F \in C^{\infty}(\varGamma_+)$ be a symmetric, monotone, 1-homogeneous and concave curvature function satisfying the normalization
\begin{equation}
\label{normalization-condition}
F(1, \dots, 1)=1.
\end{equation}
We assume further, either
\begin{enumerate}
\item{$F$ is concave and $\tilde{F}$ is concave and the initial hypersurface $M_0$ is horoconvex (i.e. all principal curvatures $\kappa_i \geq 1$),}
\begin{description}
\hspace{-1.43cm} or
\end{description}
\item{$\tilde{F}$ is convex and $M_0$ is strictly convex.}
\end{enumerate}
\end{assumption}
We now state our main results
\begin{thm}
\label{main-results}
We consider curvature flows $(\ref{direct-flow-Hn+1})$ and $(\ref{inverse-flow-N})$ under assumption \ref{general-assmption} with initial smooth
hypersurfaces $M_0$ and $\tilde{M}_0$, where $\tilde{M}_0$ is the polar hypersurface of $M_0$. Then the both flows exist on the maximal time
interval $[0, T^*)$ with finite $T^*$. The hypersurfaces $\tilde{M}(t)$ are the polar hypersurfaces of $M(t)$ and vice versa during the evolution.
The contracting flow hypersurfaces in $\mathbb{H}^{n+1}$ shrink to a point $x_0$ while the expanding flow hypersurfaces in $N$ converge to a totally
geodesic hypersurface which is isometric to $\mathbb{S}^n$. We may assume the point $x_0$ is the Beltrami point by applying an isometry
such that the hypersurfaces of the expanding flow are all contained in $N_-$ and converge to the coordinate slice $\{ \tau =0 \}$.\\
Viewing $\mathbb{H}^{n+1}$ and $N$ as submanifolds of $\mathbb{R}^{n+1,1}$ and by introducing polar coordinates in the Euclidean part of
$\mathbb{R}^{n+1,1}$ centered in $(0, \dots, 0) \in \mathbb{R}^{n+1}$, we can write flow hypersurfaces in $\mathbb{H}^{n+1}$ resp. $N$ as graphs of
functions $u$ resp. $u^*$ over $\mathbb{S}^n$.
Let $\Theta = \Theta (t, T^*)$ be the solution of $(\ref{direct-flow-Hn+1})$ with spherical initial hypersurface and exitence
intervall $[0, T^*)$. Then the rescaled functions
\begin{equation}
\label{function-tilde-u}
\tilde{u} = u \Theta^{-1}
\end{equation}
and
\begin{equation}
\label{function-w}
w = u^* \Theta^{-1}
\end{equation}
are uniformly bounded in $C^\infty (\mathbb{S}^n)$. The rescaled principal curvatures $\kappa_i \Theta$ as well as $\tilde{\kappa}_i \Theta^{-1}$ are
uniformly positiv, where $\tilde{\kappa}_i$ are the principal curvatures of $\tilde{M}(t)$.\\
If the curvature function $F$ is further strictly concave or $F = \tfrac{1}{n} H$, then the rescaled functions $(\ref{function-tilde-u})$ resp.
$(\ref{function-w})$ converge to the constant functions $1$ resp. $-1$ in $C^\infty(\mathbb{S}^n)$ exponentially fast.
\end{thm}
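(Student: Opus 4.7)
The plan is to organise the proof into four parts: (i) short-time existence of both flows, preservation of the polar duality, and finiteness of $T^*$; (ii) the unscaled asymptotic behaviour (contraction of $M(t)$ to a point $x_0$, convergence of $\tilde M(t)$ to a totally geodesic $\mathbb{S}^n$); (iii) uniform $C^\infty(\mathbb{S}^n)$ bounds on the rescaled functions $\tilde u$ and $w$; (iv) exponential convergence under strict concavity (or $F=\tfrac{1}{n}H$). Short-time existence is standard parabolic theory, since $F$ is concave and elliptic on $\varGamma_+$ and the initial data are strictly convex. For the duality I would use the Gauss map for the pair $(\mathbb{H}^{n+1},N)$ from \cite[Ch.~10]{Ger06}: if $M(t)$ solves (\ref{direct-flow-Hn+1}) and $\tilde M(t)$ denotes its polar set, a direct calculation using the reciprocal relation between the principal curvatures of a hypersurface and its polar, together with the definition (\ref{inverse-F}) of $\tilde F$, shows that $\tilde M(t)$ satisfies (\ref{inverse-flow-N}); this is the hyperbolic/de Sitter counterpart of Gerhardt's argument in $\mathbb{S}^{n+1}$. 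Comparison of the contracting flow with the explicit concentric spherical solution then forces $T^*<\infty$.

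For the unscaled asymptotic picture the key tool is preservation of horoconvexity in case~(1), respectively of strict convexity in case~(2). Both are obtained by applying the parabolic maximum principle to the evolution equation of $h^i_j-\delta^i_j$, respectively $h^i_j$, using that the ambient sectional curvature of $\mathbb{H}^{n+1}$ equals $-1$. With convexity preserved, comparison with spherical solutions shows that the diameter of $M(t)$ tends to zero in finite time, yielding the extinction point $x_0$. By polar duality this immediately gives convergence of $\tilde M(t)$ to the totally geodesic hypersurface polar to $x_0$. Applying an ambient isometry that maps $x_0$ to the Beltrami point identifies this hypersurface with $\{\tau=0\}\subset N$ and confines the dual flow to $N_-$.

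The hardest step, which I expect to be the main obstacle, is (iii). I parametrize both flows as radial graphs over $\mathbb{S}^n$ using polar coordinates on the Euclidean part of $\mathbb{R}^{n+1,1}$; the scalar $\Theta$ is simply the radial coordinate of the concentric spherical solution of (\ref{direct-flow-Hn+1}). The central task is a uniform two-sided bound on the rescaled principal curvatures $\kappa_i\Theta$, equivalently $\tilde\kappa_i\Theta^{-1}$. A lower bound on $\kappa_i\Theta$ follows from preserved horoconvexity or convexity combined with inclusion of $M(t)$ in a concentric geodesic ball given by the spherical comparison solution. For the upper bound one applies the parabolic maximum principle to an auxiliary function of the form $\log F+\log\Theta$, exploiting concavity of $F$ and the constant negative ambient curvature; on the dual side, concavity of $\tilde F$ (case~1) or convexity of $\tilde F$ (case~2) supplies the matching bound on $\tilde\kappa_i\Theta^{-1}$. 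Once the rescaled curvatures are pinched, Krylov--Safonov estimates, Evans--Krylov for the concave fully nonlinear operator $F$, and parabolic Schauder bootstrapping give the uniform $C^\infty(\mathbb{S}^n)$ bounds on $\tilde u$ and $w$.

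For exponential convergence I linearize the rescaled flow around the stationary profile $\tilde u\equiv 1$, respectively $w\equiv -1$. Strict concavity of $F$ contributes a definite coercive term to the evolution equations of $\osc_{\mathbb{S}^n}\tilde u$ and of $|\nabla\tilde u|^2$, which by a maximum-principle argument produces exponential decay of the $C^0$ and $C^1$ norms; in the mean curvature case $F=\tfrac{1}{n}H$ one obtains the analogous coercivity directly from the linearization. Interpolating this decay against the uniform $C^\infty$ bounds from (iii) upgrades it to exponential convergence in $C^\infty(\mathbb{S}^n)$. The corresponding assertion for $w$ then follows either by repeating the argument on the de Sitter side or by transferring the estimate through the Gauss map, which is uniformly bi-Lipschitz between the rescaled pictures.
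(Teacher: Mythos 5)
Your outline follows the paper's broad skeleton (Gauss-map duality, spherical barriers, rescaling by $\Theta$, Krylov--Safonov/Schauder, extra structure under strict concavity), but two of your central steps fail as described. First, the lower bound on the rescaled curvatures: horoconvexity only gives $\kappa_i\geq 1$, hence $\kappa_i\Theta\geq\Theta\rightarrow 0$, and enclosing $M(t)$ in a geodesic ball of radius comparable to $\Theta$ gives $\kappa_i\gtrsim\Theta^{-1}$ only at points where $M(t)$ touches the enclosing sphere; since the pinching estimate is pointwise, it cannot propagate that bound over the whole hypersurface. The paper's mechanism is different: it first establishes the pinching $\kappa_1\geq\epsilon\kappa_n$ (Andrews' tensor maximum principle applied to $T_{ij}=h_{ij}-g_{ij}-\epsilon(H-n)g_{ij}$ in case (1), Gerhardt's dual-flow pinching in case (2)), then shows that $\tilde F=F\Theta$ satisfies a uniformly parabolic scalar equation on $\mathbb{S}^n$ (this requires the uniform $C^2$-bounds on $\varphi$ and on the Christoffel-symbol difference of Lemma \ref{estimates-varphi}) and uses the parabolic Harnack inequality to spread the one-point bound $\sup_M\tilde F\geq c>0$, obtained at the maximum of $u$, into a global lower bound. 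The pinching also cannot be bypassed earlier: your claim that "comparison with spherical solutions shows the diameter tends to zero" is precisely where the paper needs the pinching transferred through the Beltrami map to the Euclidean ball together with Andrews' circumradius/inradius estimate to rule out degenerate collapse; likewise an upper bound on $F\Theta$ alone does not bound $\kappa_n\Theta$ for a general concave $F$ (e.g.\ $F=\sigma_n$) without pinching, and your dual-side hint only reproduces the same upper bound, not the missing lower one.

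Second, the exponential convergence. Linearizing about $\tilde u\equiv 1$ presupposes the convergence you are trying to prove (the uniform $C^\infty$ bounds give only compactness), and strict concavity of $F$ does not furnish any identifiable coercive term in the evolution of the oscillation of $\tilde u$ or of $|D\tilde u|^2$ by a direct maximum-principle argument. The paper's proof is integral: strict concavity enters the evolution inequality for $f_\sigma=F^{-\alpha}(|A|^2-nF^2)$, producing the good terms $-\epsilon^2|DA|^2F^{-\alpha}$ and $\epsilon^2F^{ij}h_{ki}h^k_jf$, after first absorbing the new term caused by $K_N=-1$ via the inequality $F^{kl}g_{kl}|A|^2-FH\leq C\sum_{i<j}(\kappa_i-\kappa_j)^2$; then $L^p$ bounds, a Stampacchia iteration with the Hoffman--Spruck Sobolev inequality giving $|A|^2-nF^2\leq c_0F^{2-\delta}$, exponential decay of $\int|D\tilde A|^2$, of the oscillation of $\tilde F$ and of $|D\tilde u|$, and only at the end interpolation against the $C^\infty$ bounds. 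Finally, the uniform $C^\infty$ bounds for $w$ do not follow from the Gauss map being "uniformly bi-Lipschitz" (that transfers at most Lipschitz bounds); the paper proves them on the de Sitter side directly, via $u_{\max}=-u^*_{\min}$, a test-function argument for $|Dw|$, the identity $\tilde h_i^j=\hat h_i^j$ to control the rescaled dual curvatures, and elliptic bootstrapping of the graph equation for $w$.
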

Let us review some results concerning the contracting flows in $\mathbb{H}^{n+1}$. Under the assumption that the initial hypersurface
is strictly convex and satisfies the condition $\kappa_i H > n$ for each $i$, Huisken \cite{Hui86} proved that
the flow (\ref{direct-flow-Hn+1}) with $F= H$ converges in finite time to a round sphere.
Andrews \cite{And94-2} proved similar results for a general class of curvature function with argument $\kappa_i -1$.
Makowski \cite{Mak12} proved the contracting flow with a volume preserving term exists for all times and converges
to a geodesic sphere exponentially fast.\\
The key ingredient treating the contracting flow is the pinching estimates. Under assmuption \ref{general-assmption} (1) it follows by a similar
calculation as in \cite{Mak12}, while Gerhardt \cite{Ger15} proved the pinching estimates under assumption \ref{general-assmption} (2).\\
The elementary symmetric polynomials are defined by
\begin{equation}
H_k(\kappa_1, \dots, \kappa_n) = \sum_{1 \leq i_1< \cdots < i_k \leq n} \kappa_{i_1} \dots \kappa_{i_k}, \quad 1 \leq k \leq n.
\end{equation}
Examples of curvature functions $F$ satisfying assumption \ref{general-assmption} (1) (up to normalization condition (\ref{normalization-condition})) are
\begin{itemize}
\label{concave-and-inverse-concave-functions}
\item{the power means $\left( \tfrac{1}{n} \sum_{i} \kappa_i^r \right)^{1/r}$ for $\abs{r} \leq 1$, }
\item{$\sigma_k = H_k^{1/k}$ for $1 \leq k \leq n$,}
\item{the inverse $\tilde{\sigma}_k$ of $\sigma_k$ for $1 \leq k \leq n$, }
\item{$\left( H_k/H_l\right)^{1/(k-l)}$ for $0 \leq l < k \leq n$,}
\item{$H_n^{\alpha_n} H_{n-1}^{\alpha_{n-1} - \alpha_n} \cdots H_2^{\alpha_2 - \alpha_3} H_1^{\alpha_1 - \alpha_2}$ for $\alpha_i \geq 0$
and $\sum_i \alpha_i =1$.}
\end{itemize}
For a proof see \cite[Chapter 2]{And07}. Moreover, the curvature functions in the above list are all strictly concave with exception of the mean curvature
(cf. Section \ref{curvature-functions})\\
Examples of convex curvature functions $\tilde{F}$, which is used in assumption \ref{general-assmption} (2)
(up to normalization condition (\ref{normalization-condition})) are (cf. \cite[Remark 2.2.13]{Ger06})
\begin{itemize}
\item{the mean curvature $H$, }
\item{the length of the second fundamental form $\abs{A} = \left( \sum_i \kappa_i^2\right)^{1/2}$,}
\item{the complete symmetric functions\\ $\gamma_k (\kappa_1, \dots, \kappa_n) =
\left( \sum_{\abs{\alpha} = k} \kappa_1^{\alpha_1} \kappa_2^{\alpha_2} \dots \kappa_n^{\alpha_n}\right)^{1/k}$ for $1 \leq k \leq n$.}
\end{itemize}
Note that for convex $\tilde{F}$ under assumption \ref{general-assmption} (2), $F$ is of class $(K)$ and homogeneous of degree 1,
hence strictly concave. (cf. \cite[Definition 2.2.1, Lemma 2.2.12, 2.2.14]{Ger06}, \cite[Lemma 3.6]{Ger13})
\section{Setting and general facts}
\label{section:general-facts}
We now review some general facts about hypersurfaces from \cite[Chapter 1]{Ger06}.
Let $N$ be a \di{$(n+1)$} dimensional semi-Riemannian manifold and $M$ be a hypersurface in $N$. Geometric quantities in $N$ will be denoted
by $(\bar{g}_{\alpha \beta}), (\bar{R}_{\alpha \beta \gamma \delta}) $, etc., where greek indices range from $0$ to $n$. Quantities in $M$
will be denoted by $(g_{ij}), (h_{ij})$ etc., where latin indices range from $1$ to $n$. Generic coordinate systems in $N$ resp. $M$ will
be denoted by $(x^{\alpha})$ resp. $(\xi^i)$.

Covariant differentiation will usually be denoted by indices, only if ambiguities are possible, by a semicolon, e.g. $h_{ij;k}$.

Let $x: M \hookrightarrow N$ be a spacelike hypersurface (i.e. the induced metric is Riemannian) with a differentiable normal $\nu$, which is always
supposed to be normalized, and $(h_{ij})$ be the second fundamental form, and set $\sigma = \ska{\nu,\nu}$.\\
We have the \emph{Gau{\ss} formula}
\begin{equation}
x_{ij}^\alpha = - \sigma h_{ij} \nu^{\alpha},
\end{equation}
the \emph{Weingarten equation}
\begin{equation}
\nu_i^{\alpha} = h_i^k x_k^\alpha,
\end{equation}
the \emph{Codazzi equation}
\begin{equation}
h_{ij;k} - h_{ik;j} = \bar{R}_{\alpha \beta \gamma \delta} \nu^{\alpha} x_i^{\beta} x_j^{\gamma} x_k^{\delta},
\end{equation}
and the \emph{Gau{\ss} equation}
\begin{equation}
R_{ijkl} = \sigma \{ h_{ik} h_{jl} - h_{il} h_{jk}\} + \bar{R}_{\alpha \beta \gamma \delta} x_i^\alpha x_j^\beta x_k^\gamma x_l^\delta.
\end{equation}

Let us review some properties of $\mathbb{H}^{n+1}$ and $N$, cf. \cite[Section 10.2]{Ger06}.
We label the coordinates in the \di{$(n+2)$} Minkowski space $\mathbb{R}^{n+1,1}$ as $x = (x^a), 0 \leq a \leq n+1$, where $x^0$ is
the time function.
Recall that the hyperbolic space $\mathbb{H}^{n+1}$ and de Sitter space $N$ are the subspaces of  $\mathbb{R}^{n+1,1}$ defined by
\begin{equation}
\mathbb{H}^{n+1} = \{ x \in \mathbb{R}^{n+1,1}: \ska{x,x} = -1, x^0>0 \},
\end{equation}
\begin{equation}
N = \{ x \in \mathbb{R}^{n+1,1}: \ska{x,x} = 1\}.
\end{equation}
Introduce polar coordinates in the Euclidean part of $\mathbb{R}^{n+1,1}$ centered in $(0, \dots, 0) \in \mathbb{R}^{n+1}$ such that the
metric in $\mathbb{R}^{n+1,1}$ is expressed as
\begin{equation}
d \bar{s}^2 = - {dx^0}^2 + dr^2 + r^2 \sigma_{ij} d \xi^i d \xi^j,
\end{equation}
where $\sigma_{ij}$ is the spherical metric.

By viewing $\mathbb{H}^{n+1}$ as
\begin{equation}
\mathbb{H}^{n+1} =\{ (x^0, r, \xi^i): r = \sqrt{\abs{x^0}^2-1}, x^0 >0, \xi \in \mathbb{S}^n\},
\end{equation}
and by setting
\begin{equation}
\varrho = \textrm{arccosh} \, x^0,
\end{equation}
$\mathbb{H}^{n+1}$ has coordinates $(\varrho, \xi^i)$ and the metric
\begin{equation}
d \bar{s}_{\mathbb{H}^{n+1}}^2  = d \varrho^2 + \sinh^2 \varrho \, \sigma_{ij}\, d \xi^i d \xi^j.
\end{equation}
Similarly,
\begin{equation}
N = \{ (x^0, r, \xi^i): r = \sqrt{1 + \abs{x^0}^2}, x^0 \in \mathbb{R}, \xi \in \mathbb{S}^n\},
\end{equation}
and by setting the eigentime
\begin{equation}
\label{eigen-time}
\tau = \textrm{arcsinh} \, x^0,
\end{equation}
$N$ has coordinates $(\tau, \xi^i)$ and the metric
\begin{equation}
\label{eigen-time-coordinates}
d \bar{s}_{N}^2  = - d \tau^2 + \cosh^2 \tau \sigma_{ij} d \xi^i d \xi^j.
\end{equation}
\section{Strictly concave curvature functions}
\label{curvature-functions}
For $\xi, \kappa \in \mathbb{R}^n$, we write $\xi \sim \kappa$, if there is $\lambda \in \mathbb{R}$ such that $\xi = \lambda \kappa$.
\begin{definition}
\label{definition-strictly-concavity}
Let $F \in C^2(\varGamma)$ be a symmetric, monotone, 1-homogeneous and concave curvature function.
We call $F$ strictly concave (in non-radial directions), if
\begin{equation}
F_{ij} \xi^i \xi^j <0 \quad \forall \xi \not\sim \kappa \textrm{ and } \xi \not=0 ,
\end{equation}
or equivalently, if the multiplicity of the zero eigenvalue for $D^2 F(\kappa)$ is one for all $\kappa \in \varGamma$.
\end{definition}
Note since $F$ is homogeneous of degree $1$, $\kappa \in \varGamma$ is an eigenvector of $D^2 F(\kappa)$ with zero eigenvalue.
In \cite[Chapter 3]{Ger13} it is proved that $\sigma_k, \, 2 \leq k \leq n$ and the inverses
$\tilde{\sigma}_k$ of $\sigma_k$, $1 \leq k \leq n$ are strictly concave. In \cite[Chapter 2]{Hui99} it is proved that
$Q_k = H_{k+1}/H_k, 1 \leq k \leq n-1$ are strictly concave in $\varGamma_+$.
We consider the rest of the concave and inverse concave curvature functions listed on page \pageref{concave-and-inverse-concave-functions}.
\begin{lem}
The curvature functions
\begin{equation}
F = ( \tfrac{1}{n} \sum_{i} \kappa_i^r )^{1/r} \quad -1 \leq r <1
\end{equation}
are strictly concave in $\varGamma_+$.
\end{lem}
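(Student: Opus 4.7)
The plan is to differentiate the defining identity $F^r = \tfrac{1}{n}\sum_i \kappa_i^r$ twice and read off a Cauchy--Schwarz comparison. A first differentiation yields $F_i = \tfrac{1}{n}\, F^{1-r}\kappa_i^{r-1}$, and differentiating once more and substituting back the relation $\tfrac{1}{n}\kappa_i^{r-1} = F^{r-1} F_i$ produces
\begin{equation*}
F_{ij} = (1-r)\Bigl[F^{-1} F_i F_j - \tfrac{1}{n}\, F^{1-r} \kappa_i^{r-2}\,\delta_{ij}\Bigr].
\end{equation*}
Since $r < 1$ the prefactor $1-r$ is strictly positive, so strict concavity reduces to showing that the bracketed expression is negative on every $\xi \not\sim \kappa$ with $\xi \neq 0$.

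Contracting with $\xi^i\xi^j$ and using $\sum_i \kappa_i^r = n F^r$ to clear the powers of $F$, the required inequality $F_{ij}\xi^i\xi^j < 0$ will reduce to
\begin{equation*}
\Bigl(\sum_i \kappa_i^{r-1}\xi^i\Bigr)^2 < \Bigl(\sum_i \kappa_i^r\Bigr)\Bigl(\sum_i \kappa_i^{r-2}(\xi^i)^2\Bigr),
\end{equation*}
which is precisely the Cauchy--Schwarz inequality applied to the vectors with components $a_i = \kappa_i^{r/2}$ and $b_i = \kappa_i^{(r-2)/2}\xi^i$. Both vectors are well-defined on $\varGamma_+$ because $\kappa_i > 0$, and $a_i > 0$ in particular.

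Since all $a_i > 0$, equality in Cauchy--Schwarz holds if and only if $b_i = \lambda\, a_i$ for some $\lambda \in \mathbb{R}$, that is $\xi^i = \lambda\, \kappa_i$, which is exactly the radial condition $\xi \sim \kappa$. Hence for $\xi \not\sim \kappa$ the inequality is strict, and $F_{ij}\xi^i\xi^j < 0$ follows, giving the defining property of strict concavity.

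I do not anticipate a genuine obstacle: the proof is routine differentiation plus one application of Cauchy--Schwarz. The only step requiring attention is verifying that the equality case matches precisely the radial direction $\xi \sim \kappa$, which works out cleanly because $\kappa \in \varGamma_+$ keeps every factor $\kappa_i^{\alpha}$ strictly positive. Note that the argument uses only $r < 1$; the lower bound $r \geq -1$ in the lemma is presumably imposed for concavity of the inverse $\tilde{F}$ (needed elsewhere in Assumption \ref{Ass:general-assmption}), not for strict concavity of $F$ itself.
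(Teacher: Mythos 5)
Your computation is correct and, for $r\neq 0$, it gives a complete proof: the Hessian identity $F_{ij}=(1-r)\{F^{-1}F_iF_j-\tfrac1n F^{1-r}\kappa_i^{r-2}\delta_{ij}\}$ is exactly the formula the paper obtains by direct differentiation (written there with the explicit factor $n^{-1/r}(\sum_l\kappa_l^r)^{1/r-2}$), and contracting with $\xi^i\xi^j$ and clearing powers of $F$ does reduce $F_{ij}\xi^i\xi^j<0$ to the strict Cauchy--Schwarz inequality you state, whose equality case is precisely $\xi\sim\kappa$. Where you differ from the paper is the finishing step: the paper never checks the sign of the quadratic form directly; it solves $F_{ij}\eta^j=0$, concludes that any null vector of $D^2F$ is radial, and then combines this with the already-known concavity of the power means for $\abs{r}\le1$ via the equivalent formulation of strict concavity in Definition~\ref{definition-strictly-concavity} (multiplicity one of the zero eigenvalue). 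Your route is more self-contained, since the Cauchy--Schwarz argument yields concavity and the kernel identification in one stroke, and it in fact proves strict concavity for every $r<1$; this also confirms your closing remark that the restriction $r\ge-1$ is only there so that $\tilde F$, which is the power mean with exponent $-r$, is concave, as required elsewhere in the paper.

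One small point to patch: the lemma includes $r=0$, where your defining identity $F^r=\tfrac1n\sum_i\kappa_i^r$ is vacuous and the curvature function must be read as the limiting geometric mean $\sigma_n=H_n^{1/n}$. The paper disposes of this case in one line by identifying the limit with $\sigma_n$, whose strict concavity is quoted from \cite{Ger13}. Within your framework you can either do the same, or note that your Hessian formula with $r=0$ still holds for $\sigma_n$ by direct computation, so the identical Cauchy--Schwarz argument applies.
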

\begin{proof}
Note that $F$ converges locally uniformly to $\sigma_n = (\kappa_1 \cdots \kappa_n)^{1/n}$ as $r \rightarrow 0$ and $\sigma_n$ is strictly concave.
Furthermore, for $-1 \leq r <1$ and $r \not= 0$,
\begin{equation}
\frac{\partial F}{\partial \kappa^i} = n^{-1/r} \left(\sum_{l} \kappa_l^r \right)^{\frac{1}{r} - 1} \kappa_i^{r-1},
\end{equation}
\begin{equation}
\frac{\partial^2 F}{\partial \kappa^i \partial \kappa^j} = n^{-1/r} (1-r) \left(\sum_{l} \kappa_l^r \right)^{\frac{1}{r} - 2} \kappa_i^{r-2}
( \kappa_i \kappa_j^{r-1} - \sum_{l} \kappa_l^r \delta_{ij} ).
\end{equation}
Consider $\eta$ such that $F_{ij} \eta^j =0$. Since $r \not=1$,
\begin{equation}
\eta_i = \left( \sum_l \kappa_l^r \right)^{-1} \kappa_j^{r-1} \eta^j \kappa_i.
\end{equation}
Knowing that $F$ is concave for $|r| \leq 1$ we conclude that $F$ is strictly concave for $-1 \leq r <1$.
\end{proof}
\begin{lem}
Let $f^\alpha$ be concave in $\varGamma_+$ for all $1 \leq \alpha \leq k$
and strictly concave in $\varGamma_+$ for at least one index in $1 \leq \alpha \leq k$.
Let $\varphi$ be strictly monotone increasing and concave in $\varGamma_+$, then
\begin{equation}
F (\kappa_1, \cdots, \kappa_n) = \varphi(f^1(\kappa_1, \cdots, \kappa_n), \cdots, f^k(\kappa_1, \cdots, \kappa_n))
\end{equation}
is strictly concave in $\varGamma_+$.
\end{lem}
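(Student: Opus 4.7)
The plan is to write the Hessian of $F=\varphi(f^1,\dots,f^k)$ via the chain rule as a ``$\varphi$-concavity'' quadratic form plus a nonnegatively-weighted sum of the Hessians of the $f^\alpha$, and then to exploit the strictly concave index together with the strict monotonicity of $\varphi$ to force a strictly negative contribution in every non-radial direction.

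First I would differentiate twice to obtain
\begin{equation*}
F_{ij}=\varphi_\alpha f^\alpha_{ij}+\varphi_{\alpha\beta}f^\alpha_i f^\beta_j,
\end{equation*}
and contract against $\xi^i\xi^j$:
\begin{equation*}
F_{ij}\xi^i\xi^j=\sum_{\alpha=1}^{k}\varphi_\alpha\bigl(f^\alpha_{ij}\xi^i\xi^j\bigr)+\varphi_{\alpha\beta}\eta^\alpha\eta^\beta,\qquad \eta^\alpha:=f^\alpha_i\xi^i.
\end{equation*}
The second term is $\leq 0$ by concavity of $\varphi$; in the first, each summand is the product of the nonnegative weight $\varphi_\alpha$ (monotonicity of $\varphi$) and the nonpositive quantity $f^\alpha_{ij}\xi^i\xi^j$ (concavity of $f^\alpha$). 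This already yields the (non-strict) concavity of $F$.

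To upgrade to strict concavity, I would fix $\xi\not\sim\kappa$ with $\xi\neq 0$ and let $\alpha_0$ be an index for which $f^{\alpha_0}$ is strictly concave. Strict monotonicity of $\varphi$ gives $\varphi_{\alpha_0}>0$, and strict concavity of $f^{\alpha_0}$ applied to the non-radial direction $\xi$ gives $f^{\alpha_0}_{ij}\xi^i\xi^j<0$. Hence the $\alpha_0$-summand is already strictly negative, while every remaining summand and the $\varphi_{\alpha\beta}$-term stay $\leq 0$, so $F_{ij}\xi^i\xi^j<0$. No step looks genuinely obstructive; the only point one must not overlook is that ``strictly monotone increasing'' has to be read in the strong sense $\varphi_{\alpha_0}>0$ rather than merely $\varphi_{\alpha_0}\geq 0$, since otherwise the strict negativity produced by $f^{\alpha_0}$ could be annihilated by a vanishing coefficient.
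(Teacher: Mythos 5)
Your proposal is correct and is essentially the paper's own argument: the same chain-rule expansion $F_{ij}\xi^i\xi^j=\varphi_\alpha f^\alpha_{ij}\xi^i\xi^j+\varphi_{\alpha\beta}f^\alpha_i f^\beta_j\xi^i\xi^j$, with the $\varphi$-Hessian term nonpositive by concavity and the strictly concave $f^{\alpha_0}$ paired with $\varphi_{\alpha_0}>0$ delivering the strict inequality in non-radial directions. Your explicit remark that strict monotonicity must be used as $\varphi_{\alpha_0}>0$ is exactly the hypothesis the paper invokes.
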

\begin{proof}
Let $0 \not= \xi \in \mathbb{R}^n$ and $\xi \not\sim \kappa$, then
\begin{equation}
F_{ij} \xi^i \xi^j = \varphi_\alpha f_{ij}^\alpha \xi^i \xi^j + \varphi_{\alpha \beta} f_i^\alpha f_j^\beta \xi^i \xi^j <0,
\end{equation}
since by assumption
\begin{equation}
\varphi_\alpha>0,  \quad \varphi_{\alpha \beta} \leq 0,  \quad f_{ij}^\alpha \xi^i \xi^j \leq 0
\end{equation}
and
\begin{equation}
\quad f_{ij}^\alpha \xi^i \xi^j<0 \textrm{ for at least one } 1 \leq \alpha \leq k.
\end{equation}
\end{proof}
Note that the weighted geometric mean
\begin{equation}
\varphi (f^1, \cdots, \f^k) = (f^1)^{\alpha_1} \cdots (f^k)^{\alpha_k} \quad \textrm{ with } \sum_{i} \alpha_i=1
\end{equation}
is a strictly monotone increasing and concave function.
Knowing that $H_{k+1}/H_k, 1 \leq k \leq n-1$ are strictly concave in $\varGamma_+$, we conclude that
\begin{equation}
\left( H_k/H_l\right)^{1/(k-l)}  = \left( H_{l+1}/ H_l \right)^{1/(k-l)} \cdots \left( H_{k}/ H_{k-1}\right)^{1/(k-l)}
\quad 0 \leq l < k \leq n
\end{equation}
and
\begin{equation}
H_n^{\alpha_n} H_{n-1}^{\alpha_{n-1} - \alpha_n} \cdots H_2^{\alpha_2 - \alpha_3} H_1^{\alpha_1 - \alpha_2} =
\left( \frac{H_1}{H_0}\right)^{\alpha_1} \left( \frac{H_2}{H_1}\right)^{\alpha_2} \cdots \left( \frac{H_n}{H_{n-1}}\right)^{\alpha_n}
\end{equation}
with $\alpha_i \geq 0$, $\sum_i \alpha_i =1$ and $\alpha_1 \not= 1$
are strictly concave in $\varGamma_+$.
\section{Polar sets and dual flows}
We state some facts about Gau{\ss} maps for $(\mathbb{H}^{n+1}, N)$, cf. \cite[Section 10.4]{Ger06}.
\begin{thm}
Let $x: M_0 \rightarrow M \subset \mathbb{H}^{n+1}$ be a closed, connected, strictly convex hypersurface.
Consider $M$ as a codimension $2$ immersed submanifold in $\mathbb{R}^{n+1,1}$ such that
\begin{equation}
x_{ij} = g_{ij} x - h_{ij} \tilde{x},
\end{equation}
where $\tilde{x} \in T_x (\mathbb{R}^{n+1,1})$ is the representation of the exterior normal vector
$\nu = (\nu^\alpha)$ of $M$ in $T_x(\mathbb{H}^{n+1})$.
Then the Gau{\ss} map
\begin{equation}
\tilde{x} : M_0 \rightarrow N
\end{equation}
is the embedding of a closed, spacelike, achronal, strictly convex hypersurface $\tilde{M} \subset N$.
Viewing $\tilde{M}$ as a codimension $2$ submanifold in $\mathbb{R}^{n+1,1}$, its Gaussian formula is
\begin{equation}
\tilde{x}_{ij} = - \tilde{g}_{ij} \tilde{x} + \tilde{h}_{ij} x,
\end{equation}
where $\tilde{g}_{ij},\tilde{h}_{ij}$ are the metric and second fundamental form of $\tilde{M}$ and $x$ is the embedding of $M$ which
also represents the future directed normal vector of $\tilde{M}$. The second fundamental form $\tilde{h}_{ij}$ is defined with respect to
the future directed normal vector, where the time orientation of $N$ is inherited from $\mathbb{R}^{n+1,1}$.
Furthermore, there holds
\begin{equation}
\tilde{h}_{ij} = h_{ij},
\end{equation}
\begin{equation}
\tilde{\kappa}_i = \kappa_i^{-1}.
\end{equation}
\qed
\end{thm}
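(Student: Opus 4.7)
The plan is to derive the two codimension-two Gauss formulae for $M$ and $\tilde M$ sitting inside $\mathbb{R}^{n+1,1}$, and then read the stated identities directly off of them. Everything else is global/topological bookkeeping.

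First I would verify the target and regularity of the Gauss map. Since $\nu\in T_x\mathbb{H}^{n+1}$ is a unit tangent, its representative $\tilde x\in T_x\mathbb{R}^{n+1,1}$ satisfies $\langle\tilde x,\tilde x\rangle=1$ and $\langle x,\tilde x\rangle=0$, so $\tilde x:M_0\to N$ lands in de Sitter space. The Weingarten equation in $\mathbb{H}^{n+1}$ yields $\tilde x_i=h_i^k x_k$; strict convexity makes $(h_i^k)$ invertible, so $\tilde x$ is an immersion and $\{\tilde x_i\}$ spans the same $n$-plane of $\mathbb{R}^{n+1,1}$ as $\{x_k\}$. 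Therefore $T_{\tilde x}\tilde M=T_xM$ as subspaces of the ambient space, and the induced metric $\tilde g_{ij}=\langle\tilde x_i,\tilde x_j\rangle=h_i^k h_{kj}$ is Riemannian, so $\tilde M$ is spacelike. Injectivity of the Gauss map on a closed strictly convex hypersurface is classical, which gives the embedding; the resulting compact spacelike hypersurface in the globally hyperbolic spacetime $N$ is then achronal since it is a graph over the Cauchy surface $\{\tau=0\}\cong\mathbb{S}^n$.

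Next I view $M$ as codimension-two in $\mathbb{R}^{n+1,1}$ with normal frame $\{x,\tilde x\}$ satisfying $\langle x,x\rangle=-1$, $\langle\tilde x,\tilde x\rangle=1$, $\langle x,\tilde x\rangle=0$. Differentiating $\langle x,x\rangle=-1$ twice gives $\langle x_{ij},x\rangle=-g_{ij}$, while differentiating $\langle x_i,\tilde x\rangle=0$ and invoking Weingarten gives $\langle x_{ij},\tilde x\rangle=-h_{ij}$. The intrinsic covariant Hessian $x_{ij}$ has no component in $T_xM$, so the normal decomposition yields
\begin{equation*}
x_{ij}=g_{ij}\,x-h_{ij}\,\tilde x.
\end{equation*}
For the dual formula I differentiate $\tilde x_i=h_i^k x_k$ once more in the flat ambient connection and substitute the formula just obtained. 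The resulting terms along $x_l$ are tangential to $\tilde M$ (because $T_{\tilde x}\tilde M=T_xM$) and drop out when one passes to the intrinsic Hessian on $\tilde M$; the purely normal part, using $h_i^k h_{kj}=\tilde g_{ij}$, gives
\begin{equation*}
\tilde x_{ij}=-\tilde g_{ij}\,\tilde x+h_{ij}\,x.
\end{equation*}
Since $x^0>0$, the vector $x$ is future-directed timelike and serves as the future-directed unit normal of $\tilde M\subset N$, and comparison with the standard Gauss formula of $\tilde M$ in $N$ with respect to this normal forces $\tilde h_{ij}=h_{ij}$.

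For the principal curvatures, write $h=gA$ and $\tilde g=A^{\top} g A$ with $A$ the Weingarten operator of $M$. The symmetry of $h_{ij}$ is equivalent to $g$-self-adjointness of $A$, i.e., $A^{\top}=gAg^{-1}$, and a short matrix computation then gives $\tilde A=\tilde g^{-1}\tilde h=A^{-1}$, hence $\tilde\kappa_i=\kappa_i^{-1}$. The only genuinely nontrivial step in the whole argument is the global claim that $\tilde x$ is an embedding with achronal image; everything else is orthogonal decomposition in the two-dimensional normal bundle and one differentiation of the Weingarten equation.
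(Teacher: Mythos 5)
The paper does not prove this theorem at all: it is quoted, with a \qed, from \cite[Section 10.4]{Ger06}, so your write-up can only be measured against the standard argument given there --- and it essentially reproduces it correctly. The normal-bundle decomposition with frame $\{x,\tilde x\}$, $\langle x,x\rangle=-1$, $\langle \tilde x,\tilde x\rangle=1$, giving $\langle x_{ij},x\rangle=-g_{ij}$, $\langle x_{ij},\tilde x\rangle=-h_{ij}$ and hence $x_{ij}=g_{ij}x-h_{ij}\tilde x$, is right; so is the differentiation of $\tilde x_i=h_i^k x_k$, where the Codazzi term $h^k_{i;j}x_k$ and the terms coming from the difference of the Levi-Civita connections of $g_{ij}$ and $\tilde g_{ij}=h_i^kh_{kj}$ are tangential to $\tilde M$ and therefore disappear from the normal-valued intrinsic Hessian, leaving $\tilde x_{ij}=-\tilde g_{ij}\tilde x+h_{ij}x$; and the matrix identity $\tilde A=(A^{\top}gA)^{-1}gA=A^{-1}$ (using $A^{\top}=gAg^{-1}$) correctly yields $\tilde h_{ij}=h_{ij}$, $\tilde\kappa_i=\kappa_i^{-1}$, which also gives the strict convexity of $\tilde M$ that you leave implicit.

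The only thin spots are the two global claims, which you assert rather than prove. Injectivity of $\tilde x$ is indeed classical, but it deserves at least the one-line reason: $\tilde x(p)$ is the pole of the totally geodesic support hyperplane of the convex body $\hat M$ at $x(p)$, and a closed strictly convex hypersurface meets each support hyperplane in exactly one point. The achronality argument as written is mildly circular, since you deduce achronality from the graph property without establishing the latter. The standard way to close this is: the projection $N\cong\mathbb{R}\times\mathbb{S}^n\to\mathbb{S}^n$ restricted to the compact spacelike image is a local diffeomorphism (a kernel vector would be proportional to $\partial_\tau$, hence timelike and tangent to a spacelike hypersurface), so it is a covering of the simply connected $\mathbb{S}^n$ and therefore a diffeomorphism, i.e.\ $\tilde M$ is a graph; achronality then follows in the conformal time coordinate, where spacelikeness means the graph function has $\sigma$-gradient of norm strictly less than $1$, or alternatively by quoting the general fact that a compact spacelike hypersurface in a globally hyperbolic spacetime with compact Cauchy surfaces is itself a Cauchy surface. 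With these two points either cited or filled in as above, your proof is complete and coincides with the argument in the reference the paper relies on.
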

We prove in the following that the duality is also valid in case of curvature flows.
\begin{lem}
\label{duality-result}
Let $\varPhi \in C^{\infty} (\mathbb{R}_+)$ be strictly monotone, $\dot{\varPhi} >0$, and let $F \in C^{\infty}(\varGamma_+)$ be a symmetric, monotone,
1-homogeneous curvature function such that $F |_{\varGamma_+}>0$ and such that the flows
\begin{equation}
\label{flow-eq-in-Hn+1-general-form}
\dot{x} = - \varPhi(F) \nu
\end{equation}
in $\mathbb{H}^{n+1}$ resp.
\begin{equation}
\dot{\tilde{x}} = - \varPhi(\tilde{F}^{-1}) \tilde{\nu}
\end{equation}
in $N$ with initial strictly convex hypersurfaces $M_0$ resp. $\tilde{M}_0$ exist on maximal time intervals $[0,T^*)$ resp.
$[0, \tilde{T}^*)$, where $\nu$ and $\tilde{\nu}$ are the exterior normal resp. past directed normal. The flow hypersurfaces
are then strictly conxex. Let $M(t)$ resp. $\tilde{M}(t)$ be the corresponding flow hypersurfaces, then $T^* = \tilde{T}^*$
and $M(t) = \tilde{M}(t)$ for all $t \in [0, T^*)$.
\end{lem}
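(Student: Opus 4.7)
The plan is to take the solution $x(\cdot, t)$ of the $\mathbb{H}^{n+1}$-flow, form its pointwise Gau\ss\ image $\tilde{x}(\xi, t) := \nu(\xi, t) \in N$, and verify that $\tilde{x}$ satisfies the dual flow equation up to tangential diffeomorphisms. Uniqueness of geometric parabolic curvature flows modulo reparametrization then identifies $\tilde{M}(t)$ with the dual evolution of $\tilde{M}_0$, and the symmetric argument interchanging the roles of $(M,\mathbb{H}^{n+1})$ and $(\tilde{M},N)$ forces $T^* = \tilde{T}^*$. First I would record short-time existence together with short-time preservation of strict convexity --- both flows are strictly parabolic under the stated assumptions --- so that the Gau\ss\ correspondence of the preceding theorem applies along the evolution and $\tilde{x}$ is a legitimate embedding into $N$.

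At a point of $M(t)$ the frame $\{x,\nu,x_1,\ldots,x_n\}$ is a basis of $\mathbb{R}^{n+1,1}$ with $\ska{x,x}=-1$, $\ska{\nu,\nu}=1$, $\ska{x_i,x_j}=g_{ij}$ and vanishing mixed inner products. Differentiating the orthogonality relations $\ska{\nu,x}=0$ and $\ska{\nu,x_i}=0$ along $\dot{x} = -\varPhi(F)\nu$ and decomposing $\partial_t\nu$ in this frame yields, after a short calculation,
\begin{equation}
\dot{\tilde{x}} \;=\; -\varPhi(F)\, x \;+\; g^{ij}\,\varPhi(F)_{j}\, x_i .
\end{equation}
By the cited duality theorem, $x$ is the future-directed unit normal $\tilde{\nu}$ of $\tilde{M}(t) \subset N$, and the Weingarten relation $\tilde{x}_i = \nu_i = h_i^{k}x_k$, combined with the strict convexity of $M(t)$, lets one express the $x_k$ as linear combinations of the $\tilde{x}_i$; hence the second summand is purely tangential to $\tilde{M}(t)$ and only reparametrizes it.

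Since $\tilde{\kappa}_i = \kappa_i^{-1}$ and $\tilde{F}$ is defined by $\tilde{F}(\kappa)=1/F(\kappa^{-1})$, one has $F(\kappa)=\tilde{F}(\tilde{\kappa})^{-1}$, so the normal component of $\dot{\tilde{x}}$ in $N$ equals precisely $-\varPhi(\tilde{F}^{-1})\tilde{\nu}$, matching the dual flow. Because curvature flows are determined by their normal velocity up to tangential diffeomorphisms, $\tilde{M}(t)$ coincides as a hypersurface with the solution of the dual flow starting at $\tilde{M}_0$ on any common interval of existence; interchanging the roles of $M$ and $\tilde{M}$ then yields $T^* = \tilde{T}^*$. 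The main delicate point is to upgrade the tangential-reparametrization step into a genuine smooth diffeomorphism intertwining the two flow parametrizations; this, together with keeping $h_i^k$ invertible and both flows strictly parabolic on the whole of $[0,T^*)$, is where the argument needs the most care.
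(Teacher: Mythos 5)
Your strategy is the same as the paper's: view both flows inside $\mathbb{R}^{n+1,1}$, differentiate the Gau\ss{} image $\tilde x=\nu$ along $\dot x=-\varPhi(F)\nu$, split $\dot{\tilde x}$ into a normal part with speed $\varPhi(F)=\varPhi(\tilde F^{-1})$ and a tangential part $\varPhi^m x_m=\varPhi^m\tilde h^k_m\tilde x_k$, and conclude by uniqueness together with the symmetry of the polar correspondence. The two places where your sketch stops short are exactly where the paper does concrete work.

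First, orientation bookkeeping. In your displayed formula the normal component of $\dot{\tilde x}$ is a multiple of $x$, which by the duality theorem is the \emph{future} directed normal of $\tilde M(t)$, whereas the lemma is stated with $\tilde\nu$ the \emph{past} directed normal; moreover the identities $\tilde h_{ij}=h_{ij}$, $\tilde\kappa_i=\kappa_i^{-1}$ (hence the value of $\tilde F$ and the meaning of strict convexity of the spacelike hypersurfaces) are tied to the choice of normal. Asserting that the normal component ``equals precisely $-\varPhi(\tilde F^{-1})\tilde\nu$, matching the dual flow'' therefore conflates the two time orientations: the paper has to switch the light cone in $N$ (replace $\tau$ by $-\tau$) to bring the induced flow into the stated form with past directed normal, and your argument needs the same step. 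Second, the principle ``curvature flows are determined by their normal velocity up to tangential diffeomorphisms,'' which you yourself single out as the delicate point, is not invoked abstractly in the paper. Instead, the induced motion of the polar hypersurfaces is written locally as graphs over $\mathbb{S}^n$, the parametric equation is converted into the scalar equation $\partial\tilde u/\partial t=\varPhi\tilde v$, this is recognized as the scalar equation of the genuine dual flow, and uniqueness for the scalar parabolic problem does the identification; the remaining continuation issues (strict convexity beyond short time and $T^*=\tilde T^*$) are then quoted from Gerhardt's sphere paper, Theorem 4.2. Supplying this nonparametric (or an equivalent DeTurck-type reparametrization) argument is what is needed to close your proof; with it in place, interchanging the roles of $M$ and $\tilde M$ gives $T^*=\tilde T^*$ as you indicate.
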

\begin{proof}
The arguments are similar to those in \cite[Section 4]{Ger13} with combination with the results from \cite[Section 10.4]{Ger06}.
Since there holds
\begin{equation}
\ska{x,x} = 1,  \, \ska{\dot{x},x} = 0, \, \ska{x_j,x} =0, \, \ska{\tilde{x},x} =0,
\end{equation}
(see \cite[Lemma 10.4.1]{Ger06} for the last identity)
we can consider the flow (\ref{flow-eq-in-Hn+1-general-form}) as flow in $\mathbb{R}^{n+1,1}$
\begin{equation}
\dot{x} = - \varPhi \tilde{x},
\end{equation}
and we have the decomposition
\begin{equation}
\label{decomp-tanget-space}
T_x(\mathbb{R}^{n+1,1}) = T_x(\mathbb{H}^{n+1})  \, \oplus \, \ska{x}.
\end{equation}
Furthermore, we conclude from
\begin{equation}
\ska{\dot{\tilde{x}},x_j} = \varPhi_j, \, \ska{\dot{\tilde{x}}, \tilde{x}} = 0,  \, \ska{\dot{\tilde{x}},x} = \varPhi,
\end{equation}
from the Weingarten equation (see \cite[Lemma 10.4.3, 10.4.4]{Ger06})
\begin{equation}
x_j = \tilde{h}_j^k \tilde{x}_k,
\end{equation}
and from (\ref{decomp-tanget-space}) that
\begin{equation}
\dot{\tilde{x}} = \varPhi x + \varPhi^m x_m = \varPhi x + \varPhi^m \tilde{h}_m^k \tilde{x}_k,
\end{equation}
where
\begin{equation}
\varPhi^m = g^{mj} \varPhi_j,
\end{equation}
and the second fundamental form $\tilde{h}_{ij}$ is defined with respect to the future directed normal vector $\tilde{\nu}$.
The corresponding flow equation in $N$ has the form
\begin{equation}
\label{flow-eq-in-N}
\dot{\tilde{x}} = \varPhi \tilde{\nu} + \varPhi^m \tilde{h}_m^k \tilde{x}_k.
\end{equation}
Let $t_0 \in [0,T^*)$ and introduce polar coordinates in the Euclidean part of the Minkowski space as well as
an eigentime coordinate system in $N$ as in Section \ref{section:general-facts}.
For $\epsilon$ small and $t_0<t<t_0+ \epsilon$, $\tilde{M}(t)$ can be written as graph over $\mathbb{S}^n$
\begin{equation}
\tilde{M}(t) = \textrm{graph} \, \tilde{u}|_{\mathbb{S}^n},
\end{equation}
and we obtain the scalar flow equation
\begin{equation}
\frac{d \tilde{u}}{d t} = \varPhi \tilde{v}^{-1} + \varPhi^m \tilde{h}_m^{k} \tilde{u}_k,
\end{equation}
where
\begin{equation}
\invv^2 = 1 - \abs{D \tilde{u}}^2 = 1 - \frac{1}{\cosh^2 \tilde{u}} \sigma^{ij} \tilde{u}_i \tilde{u}_j.
\end{equation}
Note that $\tilde{\nu}$  in (\ref{flow-eq-in-N}) is the future directed normal
\begin{equation}
(\tilde{\nu}^\alpha) = \invv^{-1} (1,\check{\tilde{u}}^i),
\end{equation}
where
\begin{equation}
\check{\tilde{u}}^i = \frac{1}{\cosh^2 \tilde{u}} \sigma^{ij} \tilde{u}_j.
\end{equation}
Thus it holds in view of (\ref{flow-eq-in-N})
\begin{equation}
\begin{split}
\frac{\partial \tilde{u}}{\partial t} &= \frac{d \tilde{u}}{d t} - \tilde{u}_i \dot{\tilde{x}}^i\\
&= \varPhi \tilde{v}^{-1} + \varPhi^m h_m^{k} \tilde{u}_k - \varPhi \invv^{-1} \abs{D \tilde{u}}^2
- \varPhi^m \tilde{h}_m^k \delta_k^i \tilde{u}_i\\
&= \varPhi \invv.
\end{split}
\end{equation}
This is exactly the scalar curvature equation of the flow equation
\begin{equation}
\label{flow-eq-in-N-simplified}
\dot{\tilde{x}} = -\varPhi \tilde{\nu},
\end{equation}
where  $\tilde{\nu}$ in (\ref{flow-eq-in-N-simplified})
is the future directed normal and
\begin{equation}
\varPhi = \varPhi(F) = \varPhi(\tilde{F}^{-1}).
\end{equation}
Now $\tilde{h}_{ij}$ in $N$ is defined with respect to the future directed normal. By adapting the convention in \cite[p.307]{Ger06}
we switch the light cone in $N$ and by defining $\tau = - \mathrm{arcsinh} \, x^0$ in (\ref{eigen-time}) we still derive the flow
(\ref{flow-eq-in-N-simplified}) in $N$, where $\tilde{\nu}$ is now the past directed normal and the second fundamental form is defined
with respect to this normal.
The rest of the proof is identical to \cite[Theorem 4.2]{Ger13}.
\end{proof}

From now we shall employ this duality by choosing
\begin{equation}
\varPhi(r) = r.
\end{equation}

Note that the expanding flows in $\mathbb{H}^{n+1}$ was already considered in \cite{Ger11} and its non-scale-invariant version in \cite{Sch12}.

\section{Pinching estimates}
We consider the contracting flow
\begin{equation}
\label{flow-eq}
\begin{split}
&\dot{x} = - F \nu,\\
&x(0) = M_0
\end{split}
\end{equation}
in $\mathbb{H}^{n+1}$ with initial smooth and strictly convex hypersurfaces $M_0$, where $\nu$ is the exterior normal vector.

Under the assumptions of Theorem \ref{main-results} the curvature flow (\ref{flow-eq}) exists on a maximal time interval
$[0,T^*), 0 < T^* \leq \infty$, cf. \cite[Theorem 2.5.19, Lemma 2.6.1]{Ger06}.

\begin{thm}
\label{preservation-h-convexity}
Let $M(t)$ be a solution of the flow $(\ref{flow-eq})$
in $\mathbb{H}^{n+1}$.
If the initial hypersurface $M_0$ in $\mathbb{H}^{n+1}$ satisfies
\begin{equation}
\kappa_i > 1,
\end{equation}
then this condition will also be satisfied by the flow hypersurfaces $M(t)$ during the evolution.
\end{thm}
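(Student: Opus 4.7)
The plan is to apply Hamilton's tensor maximum principle to the $(0,2)$\nbd tensor
\[
M_{ij} := h_{ij} - g_{ij},
\]
which is positive definite on $M_0$ by the hypothesis $\kappa_i > 1$, and to show that this positivity persists on $[0,T^*)$. Using the standard identities from \cite[Chapter~2]{Ger06} together with $\bar R_{\alpha\beta\gamma\delta}\nu^\alpha x_i^\beta\nu^\gamma x_j^\delta = -g_{ij}$ in $\mathbb{H}^{n+1}$, the flow $\dot x = -F\nu$ gives
\[
\partial_t g_{ij} = -2F h_{ij}, \qquad \partial_t h_{ij} = F_{;ij} - F h_i^k h_{kj} - F g_{ij},
\]
and subtracting yields
\[
\partial_t M_{ij} = F_{;ij} - F h_i^k h_{kj} - F g_{ij} + 2F h_{ij}.
\]

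Arguing by contradiction, let $t_0\in(0,T^*)$ be the first time at which $M_{ij}$ admits a null eigenvector, at some $p_0$, with unit null vector $\xi$. Choosing an orthonormal frame diagonalizing $h$ at $(p_0,t_0)$ with $e_1 = \xi$, one has $\kappa_1 = 1$ and $\kappa_j \geq 1$ for $j \geq 2$. A striking cancellation then occurs when the evolution is contracted with $\xi^i\xi^j$: the three bare algebraic terms give $-F - F + 2F = 0$, so that
\[
\partial_t(M_{ij}\xi^i\xi^j)\big|_{(p_0,t_0)} = F_{;ij}\xi^i\xi^j,
\]
and the task reduces to showing $F_{;ij}\xi^i\xi^j > 0$ at $(p_0,t_0)$.

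Expanding $F_{;ij} = F^{kl}h_{kl;ij} + F^{kl,rs}h_{kl;i}h_{rs;j}$, I commute $h_{kl;ij}$ into $h_{ij;kl}$ via Codazzi and Ricci in the space form, using the intrinsic Gauss equation $R_{abcd} = h_{ac}h_{bd} - h_{ad}h_{bc} - (g_{ac}g_{bd} - g_{ad}g_{bc})$. A direct computation in the diagonal frame produces the clean identity
\[
F^{kl}\bigl(h_{kl;ij} - h_{ij;kl}\bigr)\xi^i\xi^j = \sum_k F^k(\kappa_k - 1)^2,
\]
which is the nonnegative commutator contribution generated by the ambient curvature $\bar K = -1$. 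Combined with the spatial Hessian estimate $F^{kl}M_{ij;kl}\xi^i\xi^j \geq 0$ (so $F^{kl}h_{ij;kl}\xi^i\xi^j \geq 0$) and the first-order vanishing $h_{11;k} = 0$ (whence $h_{1j;1} = 0$ by Codazzi), one obtains
\[
F_{;ij}\xi^i\xi^j \;\geq\; \sum_k F^k(\kappa_k - 1)^2 + F^{kl,rs} h_{kl;1} h_{rs;1},
\]
with the last term $\leq 0$ by concavity of $F$. Controlling this concavity term is the remaining ingredient: under Assumption~\ref{general-assmption}(1) this is the argument of Makowski~\cite{Mak12} based on the inverse concavity of $F$, while under (2) it follows from the class-$(K)$ structure of $F$ (cf.\ \cite[Lemma~2.2.14]{Ger06}), as in \cite{Ger15}. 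In either case the sum is strictly positive, contradicting the minimality of $t_0$.

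The main obstacle is exactly the balancing in the last display. The positive commutator contribution $\sum_k F^k(\kappa_k - 1)^2$ arises \emph{precisely} from the sign of $\bar K = -1$, which is the geometric reason horoconvexity rather than mere convexity is the natural cone preserved in $\mathbb{H}^{n+1}$; but one must combine Codazzi bookkeeping at the null eigenvector with either inverse concavity of $F$ or the class-$(K)$ hypothesis on $\tilde F$ to guarantee that the concavity gradient term cannot absorb it. The degenerate subcase where all $\kappa_k = 1$ at $p_0$ (so $\sum_k F^k(\kappa_k-1)^2 = 0$) must be handled separately by a strong maximum principle argument.
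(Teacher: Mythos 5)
Your choice of tensor and your zero--order computation do match the paper: the paper also works with $S_{ij}=h_{ij}-g_{ij}$, derives its parabolic evolution, and finds that at a null eigenvector the algebraic terms reduce to $\sum_i F_i(\kappa_i-1)^2\,\abs{\eta}^2\geq 0$, which is exactly your commutator contribution. The genuine gap is in how you dispose of the concavity term $F^{kl,rs}h_{kl;1}h_{rs;1}$. At your spatial minimum you only get $h_{11;k}=0$; the derivatives $h_{kl;1}$ with $k,l\geq 2$ do not vanish, so by concavity this term is nonpositive and of completely uncontrolled size relative to $\sum_k F_k(\kappa_k-1)^2$. Your assertion that ``in either case the sum is strictly positive'' is therefore unjustified, and as a pointwise statement it is false in general: inverse concavity does \emph{not} make $F^{kl,rs}h_{kl;1}h_{rs;1}$ dominated by the zero--order term at a point. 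The mechanism that actually works, and the one the paper uses, is Andrews' refined tensor maximum principle: one needs the inequality
\begin{equation*}
\tilde N_{ij}\eta^i\eta^j+\sup_{\varGamma}2F^{kl}\bigl\{2\varGamma^r_l S_{ir;k}\eta^i-\varGamma^r_k\varGamma^s_l S_{rs}\bigr\}\geq 0,
\end{equation*}
proved in Andrews \cite[Theorem 3.3, Lemma 4.4]{And07} using only inverse concavity of $F$, where the gradient terms $S_{1r;k}$ (which survive at the null vector) are essential to absorb the negative concavity term. A first--null--time pointwise contradiction cannot accommodate these $\sup_{\varGamma}$ gradient corrections without essentially reproving Andrews' theorem, so citing Makowski or the class $(K)$ structure in passing does not close the argument.

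Two further points. First, your scheme needs \emph{strict} positivity of $F_{;ij}\xi^i\xi^j$ at $(p_0,t_0)$ to derive a contradiction, and you concede the degenerate umbilic case $\kappa_k\equiv 1$ is untreated; the paper avoids this entirely because Andrews' maximum principle \cite[Theorem 3.2]{And07} preserves nonnegativity of $S_{ij}$ without any strictness requirement (the borderline situation $\kappa_i\geq 1$ is then handled in the paper by a separate constant rank theorem). Second, under Assumption \ref{general-assmption} (2) the function $F$ is not inverse concave ($\tilde F$ is convex, not concave), so your parenthetical treatment of that case does not apply as stated; the proof given in the paper is the inverse--concave case, which is the one relevant for preserving $\kappa_i>1$.
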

\begin{proof}
The tensor
\begin{equation}
\label{S_ij-def}
S_{ij} = h_{ij} - g_{ij}
\end{equation}
satisfies the equation
\begin{equation}
\begin{split}
\dot{S}_{ij} - F^{kl} S_{ij;kl} = &\, F^{kl} h_{rk} h^r_l h_{ij} - 2F h_i^k h_{kj}\\
& + K_N \{ 2F g_{ij} - F^{kl} g_{kl} h_{ij}\} + 2F h_{ij} + F^{kl,rs} h_{kl;i} h_{rs;j}\\
\equiv & \, N_{ij} + \tilde{N}_{ij},
\end{split}
\end{equation}
where $\tilde{N}_{ij} = F^{kl,rs} h_{kl;i} h_{rs;j}$.
At every point where $h_{ij} \eta^j = \eta_i$  there holds
\begin{equation}
N_{ij} \eta^i \eta^j = \, \{ F^{kl} h_{rk} h_l^r - 2F + F^{kl} g_{kl} \} \abs{\eta}^2 \geq 0.
\end{equation}
It was proved in \cite[Theorem 3.3, Lemma 4.4]{And07} that
\begin{equation}
\tilde{N}_{ij} \eta^i \eta^j + \sup_{\varGamma} 2 F^{kl} \{ 2 \varGamma_l^r S_{ir;k} \eta^i - \varGamma_k^r \varGamma_l^s S_{rs}\} \geq 0,
\end{equation}
where only the inverse concavity of $F$ was used. Andrews' maximum principle in \cite[ Theorem 3.2]{And07} implies that $S_{ij} > 0$ during
the evolution.
\end{proof}
In the next step we use a constant rank theorem to allow the condition $\kappa_i \geq 1$ in the proof of the succeeding Lemma \ref{pinching-estimates-thm}.
\begin{lem}
Let $M(t)$ be a solution of the flow $(\ref{flow-eq})$ in $\mathbb{H}^{n+1}$ and assume that the tensor $S_{ij}$ satisfies  $S_{ij} \geq 0$
on the hypersurfaces $M(t)$ for $t \in [0, T^*)$, then  $S_{ij}$ is of constant rank $l(t)$ for every $t \in (0,T^*)$.
\end{lem}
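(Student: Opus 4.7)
The plan is to apply a parabolic constant rank theorem in the spirit of Caffarelli--Guan--Ma and Bian--Guan (for tensors), using the evolution equation for $S_{ij}$ already derived in the proof of Theorem \ref{preservation-h-convexity} together with the non-negativity hypothesis $S_{ij}\geq 0$. Fix $t_0 \in (0,T^*)$ and let
\begin{equation*}
l(t_0) = \min_{x\in M(t_0)} \operatorname{rank} S_{ij}(x,t_0),
\end{equation*}
attained at some point $x_0 \in M(t_0)$. Choose local coordinates around $x_0$ diagonalising $S_{ij}$ at $(x_0,t_0)$ so that the first $n-l$ eigenvalues vanish while the remaining $l$ are strictly positive. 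Since rank is lower semicontinuous on the set $\{S\geq 0\}$, the rank is $\geq l$ in a parabolic neighbourhood, so one only needs to propagate the bound $\leq l$.

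Introduce the auxiliary function
\begin{equation*}
\phi(x,t) = \sigma_{l+1}(S(x,t)),
\end{equation*}
i.e.\ the sum of all $(l+1)\times(l+1)$ principal minors of $S_{ij}$. Then $\phi \geq 0$ on an open parabolic neighbourhood $U$ of $(x_0,t_0)$ (since $S\geq 0$), and $\phi(x_0,t_0)=0$ because the rank there is $l$. The heart of the argument is to show the differential inequality
\begin{equation*}
\dot{\phi} - F^{kl}\phi_{;kl} \;\leq\; C\bigl(\phi + |\nabla \phi|\bigr) \qquad \text{in } U,
\end{equation*}
for some constant $C$ depending on $F$ and on the bounds for $h_{ij}$ in $U$. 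The derivation combines three inputs: (a) the tensor evolution $\dot S_{ij} - F^{kl} S_{ij;kl} = N_{ij} + \tilde{N}_{ij}$; (b) the fact that $N_{ij}\eta^i\eta^j \geq 0$ on vectors $\eta$ in the kernel of $S$ (already established in Theorem \ref{preservation-h-convexity}); and (c) the Bian--Guan-type algebraic identity for $\tilde{N}_{ij} = F^{kl,rs} h_{kl;i} h_{rs;j}$ that exactly the problematic third-order terms arising when differentiating $\sigma_{l+1}(S)$ twice are compensated by the quadratic-in-derivative terms bounded in \cite[Thm 3.3, Lem 4.4]{And07}. This absorption step is precisely where the inverse-concavity of $F$ (Assumption \ref{general-assmption}) enters.

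Granting the differential inequality, the strong minimum principle of Bony for degenerate parabolic operators (applied to $\phi$ which is non-negative and vanishes at $(x_0,t_0)$) yields $\phi \equiv 0$ in a backward and forward parabolic neighbourhood of $(x_0,t_0)$. Hence $\operatorname{rank} S \leq l$ throughout that neighbourhood, while lower semicontinuity gives the reverse inequality; therefore $\operatorname{rank} S \equiv l$ nearby. Since $M(t)$ is connected (each flow hypersurface is topologically $\mathbb{S}^n$), a standard connectedness argument upgrades this from a local statement to the conclusion that $\operatorname{rank} S_{ij}(\cdot,t)$ is a constant $l(t)$ on all of $M(t)$ for every $t\in(0,T^*)$.

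The main obstacle is not the maximum-principle step, which is now fairly standard, but the algebraic manipulation required to verify the differential inequality for $\phi$. In particular, when $\sigma_{l+1}(S)$ is differentiated twice one obtains, besides the "good" quadratic-minor terms, derivatives of $h_{ij}$ which must be controlled by $|\nabla \phi|$ modulo terms cancelling against $F^{kl,rs}h_{kl;i}h_{rs;j}$; this cancellation is the delicate point and relies essentially on inverse-concavity, precisely as in Andrews' framework. Once this structural lemma is in place the remainder of the argument is routine.
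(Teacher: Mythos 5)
Your overall strategy is the same as the paper's (evolution equation for $S_{ij}$, a constant rank test function, a strong maximum principle of Bony type, then connectedness of $M(t_0)$), but the step you yourself flag as delicate is exactly where your version breaks down. You take $\phi=\sigma_{l+1}(S)$ alone and assert the inequality $\dot\phi-F^{kl}\phi_{;kl}\leq C(\phi+|\nabla\phi|)$, attributing the needed cancellation to a ``Bian--Guan-type identity''. But Bian--Guan's computation is not available for $\sigma_{l+1}$ by itself: when you differentiate $\sigma_{l+1}(S)$ twice, the terms $F^{kl}\sigma_{l+1}^{ij,rs}S_{ij;k}S_{rs;l}$ together with the contribution $\phi^{ij}F^{kl,rs}h_{kl;i}h_{rs;j}$ produce terms that are \emph{linear} in the derivatives $S_{ij;k}$ with both indices in the null (``bad'') block $B$, and these cannot be bounded by $C(\phi+|\nabla\phi|)$, nor does inverse concavity of $F$ remove them. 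The whole point of the Bian--Guan test function is that the quotient correction $H_{l+2}(W)/H_{l+1}(W)$ has a Hessian contributing negative quadratic terms in precisely those bad-block derivatives, which are then used to absorb the linear ones via Cauchy--Schwarz. Accordingly, the paper works with $W_{ij}=S_{ij}+\epsilon g_{ij}$ and
\begin{equation*}
\phi=H_{l+1}(W)+\frac{H_{l+2}(W)}{H_{l+1}(W)},
\end{equation*}
proves the differential inequality with constants independent of $\epsilon$ (using $\epsilon=O(\phi)$, $\phi^{ii}=O(\phi)$ for $i\in G$, and the $C^{1,1}$ regularity of $\phi$), and only then lets $\epsilon\to 0$ before applying the strong maximum principle. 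Without the quotient term (and the $\epsilon$-regularisation that keeps it well defined), the structural inequality you rely on is not established, so the core of your proof is missing rather than merely deferred.

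Two smaller points: the strong maximum principle gives $\phi\equiv 0$ only on a backward neighbourhood $(t_0-\delta,t_0]\times\mathcal{O}$, not forward in time as you claim (this does not matter for the conclusion, which is spatial constancy of the rank at each fixed $t_0$); and your concluding openness--closedness argument on the closed hypersurface $M(t_0)$ is fine and is essentially what the paper does.
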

\begin{proof}
The proof is similar to those in \cite[Theorem 3.2]{Wan14}, where the main part is based on the computation in \cite[Theorem 3.2]{Bia09}.
For $\epsilon>0$, let
\begin{equation}
W_{ij} = S_{ij} + \epsilon g_{ij}.
\end{equation}
Let $l(t)$ be the minimal rank of $S_{ij}(t)$. For a fixed $t_0 \in (0,T^*)$, let $x_0 \in M(t_0)$ be the point such that $S_{ij}(t_0,\xi)$
attains its minimal rank at $x_0$. Set
\begin{equation}
\phi(t,\xi) = H_{l+1}(W_{ij}(t,\xi)) + \frac{H_{l+2}(W_{ij}(t,\xi))}{H_{l+1}(W_{ij}(t,\xi))},
\end{equation}
where $H_l$ is the elementary symmetric polynomials of eigenvalues of $W_{ij}$, homogeneous of order $l$.
A direct computation shows
\begin{equation}
\begin{split}
F^{kl} W_{ij;kl} - \dot{W}_{ij} =& -F^{kl} h_{rk} h^r_l W_{ij} - F^{kl} g_{kl} W_{ij} + 2F h_i^k W_{kj}\\
&\, - F^{kl,rs} W_{kl;i} W_{rs;j} +2F \epsilon g_{ij} \\
&\,- (1 - \epsilon) \{ F^{kl}h_{rk} h^r_l - 2F + F^{kl} g_{kl}\} g_{ij}.
\end{split}
\end{equation}
As in \cite{Bia09}, we consider a neighborhood  $ (t_0 - \delta ,t_0] \times \mathcal{O}$ around $(t_0,\xi_0)$.
We use the notation $h = O(f)$
if $\abs{h(\xi)} \leq C f(\xi)$ for every $(t,\xi) \in (t_0 - \delta ,t_0] \times \mathcal{O}$, where $C$ is a
constant, depending on the $C^{1,1}$ norm of the second fundamental form on $ (t_0 - \delta ,t_0] \times \mathcal{O}$, but independent of $\epsilon$.
It was proved in \cite[Corollary 2.2]{Bia09} that $\phi$ is in $C^{1,1}$.
And as in \cite{Bia09}, let $G= \{ n-l+1, n-l+2, \dots, n\}$ and $B= \{ 1, \dots, n-l\}$.
We choose the coordinates such that $h_{ij} = \kappa_i \delta_{ij}$ and $g_{ij} = \delta_{ij}$.
In view of \cite[(3.14)]{Bia09}, in such coordinates $\phi^{ij}$ is up to $O(\phi)$ non-negative in $\mathcal{O}$ and we have
\begin{equation}
\begin{split}
F^{kl} \phi_{;kl} - \dot{\phi} \leq& \, \phi^{ij} \{ -F^{kl} h_{rk} h^r_l W_{ij} - F^{kl} g_{kl} W_{ij} + 2F h_i^k W_{kj}\\
& \, +2F \epsilon g_{ij} - F^{kl,rs} W_{kl;i} W_{rs;j}\} + F^{kl} \phi^{ij,rs} W_{ij;k} W_{rs;l}+ O (\phi).
\end{split}
\end{equation}
We can choose $\mathcal{O}$ small enough, such that $\epsilon = O (\phi)$ as in \cite[(3.8)]{Bia09}. It was proved in \cite[(3.14)]{Bia09} that
$\phi^{ii} = O(\phi)$ for $i \in G$ and since $W_{ii} \leq \phi$ for $i \in B$, we infer that
\begin{equation}
F^{kl} \phi_{;kl} - \dot{\phi} \leq - \phi^{ij} F^{kl,rs} W_{kl;i} W_{rs;j} + F^{kl} \phi^{ij,rs} W_{ij;k} W_{rs;l} + O (\phi).
\end{equation}
Using the inverse concavity of $F$ and proceed as in \cite[Theorem 3.2]{Bia09}, we conclude
\begin{equation}
F^{kl} \phi_{;kl} - \dot{\phi} \leq C \{ \phi + \abs{D \phi} \},
\end{equation}
where $C$ is a constant independent of $\epsilon$ and $\phi$.
Taking $\epsilon \rightarrow 0$, the strong maximum principle for parabolic equations yields
\begin{equation}
H_{l(t_0)+1} (S_{ij}(t,\xi)) \equiv 0 \quad \forall (t,\xi) \in (t_0 - \delta, t_0] \times \mathcal{O}.
\end{equation}
Since $M(t_0)$ is a closed hypersurface, $S_{ij}(t_0,\xi)$ is of constant rank $l(t_0)$ on $M(t_0)$.
\end{proof}
Note that the proof of the Lemma \ref{preservation-h-convexity} implies, if the initial hypersurface satisfies $\kappa_i \geq 1$, then this
condition remains true during the evolution. Furthermore, every closed hypersurface in $\mathbb{H}^{n+1}$ contains a point on which holds $\kappa_i>1$.
Thus we conclude
\begin{cor}
Let $M(t)$ be a solution of the flow $(\ref{flow-eq})$
in $\mathbb{H}^{n+1}$. If the initial hypersurface $M_0$ in $\mathbb{H}^{n+1}$ satisfies $\kappa_i \geq 1$, then $\kappa_i>1$ for every
$t \in (0, T^*)$.
\end{cor}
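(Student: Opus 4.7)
The plan is to combine the preservation argument behind Theorem~\ref{preservation-h-convexity} with the constant rank lemma just proved. First, I note that the proof of Theorem~\ref{preservation-h-convexity} also yields the borderline statement: if $S_{ij} = h_{ij} - g_{ij} \ge 0$ holds initially, then $S_{ij}(t) \ge 0$ for all $t \in [0, T^*)$. This is because Andrews' tensor maximum principle requires only the null-eigenvector inequality for the reaction terms, and the computations $N_{ij}\eta^i\eta^j \ge 0$ together with the control of $\tilde{N}_{ij}\eta^i\eta^j$ on null eigenvectors $\eta$ of $S_{ij}$ used in that proof apply without change.

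Having preserved $S_{ij} \ge 0$, I invoke the preceding constant rank lemma to conclude that $S_{ij}(t)$ has constant rank $l(t)$ on all of $M(t)$, for every $t \in (0, T^*)$.

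The remaining ingredient is the geometric claim that every closed hypersurface $M \subset \mathbb{H}^{n+1}$ contains a point at which all principal curvatures strictly exceed $1$. To see this, recall that $M$ bounds a compact domain by Jordan--Brouwer; pick an interior point $q$ and let $p \in M$ realize the maximum of $r(x) := \dist(x, q)$. Since $M$ is compact, $r(p) < \infty$. At $p$ the hypersurface $M$ is tangent from inside to the geodesic sphere of radius $r(p)$ about $q$, whose principal curvatures all equal $\coth r(p)$. Comparing second fundamental forms at this internal tangency yields $\kappa_i(p) \ge \coth r(p) > 1$ for every $i$.

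Consequently, $S_{ij}$ has full rank $n$ at $p$, so by the constant rank lemma $l(t) = n$ on the whole of $M(t)$. This gives $S_{ij}(t) > 0$, equivalently $\kappa_i(t) > 1$, everywhere on $M(t)$ for every $t \in (0, T^*)$. The only subtle point is the weak-inequality extension of Theorem~\ref{preservation-h-convexity}, but this is a standard feature of Andrews' maximum principle and needs no new computation; the rest is the direct combination of the two preceding results with the distance-comparison argument.
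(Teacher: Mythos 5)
Your proof is correct and takes essentially the same route as the paper: preservation of $S_{ij}=h_{ij}-g_{ij}\geq 0$ via the computation in Theorem \ref{preservation-h-convexity} and Andrews' maximum principle, the constant rank lemma, and the fact that every closed hypersurface in $\mathbb{H}^{n+1}$ has a point with all $\kappa_i>1$, which forces full rank everywhere. The only difference is cosmetic: the paper merely asserts this last geometric fact, while you supply the standard inscribed-in-a-geodesic-sphere tangency comparison giving $\kappa_i(p)\geq \coth r(p)>1$ at a farthest point.
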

\begin{lem}
\label{pinching-estimates-thm}
Let $M(t)$ be a solution of the flow $(\ref{flow-eq})$
in $\mathbb{H}^{n+1}$ under assumption \ref{general-assmption} $(1)$,
then there exists a uniform positive constant $\epsilon>0$ such that
\begin{equation}
\kappa_1 \geq \epsilon \kappa_n
\end{equation}
during the evolution, where the principal curvatures are labeled as
\begin{equation}
\kappa_1 \leq \cdots \leq \kappa_n.
\end{equation}
\end{lem}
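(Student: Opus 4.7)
The plan is to apply Andrews' tensor maximum principle \cite[Theorem 3.2, 4.1]{And07} to preserve the positivity of a suitable pinching tensor, following closely the strategy used by Makowski in \cite{Mak12}. The two ingredients from Assumption \ref{general-assmption} (1) that drive the argument are the concavity of the dual curvature function $\tilde F$ (i.e.\ inverse concavity of $F$), which governs the second-derivative gradient quadratic $F^{kl,rs}h_{kl;i}h_{rs;j}$, and the preserved horoconvexity $h_j^i \geq \delta_j^i$ from Theorem \ref{preservation-h-convexity}, which converts the constant sectional curvature $-1$ of $\mathbb H^{n+1}$ into a favorable zeroth-order contribution.

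The first step is to derive the evolution equation of the Weingarten operator $h_j^i$ under the flow $(\ref{flow-eq})$; by the Codazzi and Gau{\ss} equations of Section \ref{section:general-facts} it takes the schematic form
\begin{equation*}
  \dot h_j^i - F^{kl}h_{j;kl}^i = F\, h_k^i h_j^k - F^{kl}h_{rk}h_l^r\, h_j^i + F^{kl,rs} h_{kl;}{}^{i}h_{rs;j} + \{\text{zeroth-order terms from }\mathbb H^{n+1}\}.
\end{equation*}
The next step is to introduce a pinching tensor. Motivated by the Andrews choice in the Euclidean setting together with Makowski's modification in hyperbolic space, consider
\begin{equation*}
  T_j^i = C\, F\, \delta_j^i - h_j^i,
\end{equation*}
with $C$ so large that $T_j^i(0)\geq 0$ on $M_0$, which is possible since $M_0$ is smooth, closed and strictly convex. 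Preservation of $T_j^i \geq 0$ gives $\kappa_n \leq C\, F$; combining this with the preserved horoconvexity $\kappa_i \geq 1$, the two-sided bound $\kappa_1 \leq F \leq \kappa_n$ coming from monotonicity and $1$-homogeneity, and a Kato-type scalar inequality that trades $\kappa_n/F$ for $\kappa_n/\kappa_1$ in the horoconvex regime, yields the ratio bound $\kappa_1 \geq \epsilon\, \kappa_n$ with $\epsilon = \epsilon(M_0, F, n) > 0$.

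The principal obstacle is verifying the two null-vector conditions required by \cite[Theorem 3.2]{And07} at a first-touch point with null eigenvector $\eta$: namely $N_{ij}\eta^i\eta^j \leq 0$, and more delicately
\begin{equation*}
  \tilde N_{ij}\eta^i\eta^j + \sup_{\Gamma} 2F^{kl}\bigl\{2\Gamma_l^r T_{ir;k}\eta^i - \Gamma_k^r\Gamma_l^s T_{rs}\bigr\} \leq 0.
\end{equation*}
The first condition is settled by the same type of ambient-curvature computation as in the proof of Theorem \ref{preservation-h-convexity}, using $h_j^i \geq \delta_j^i$ to absorb the contribution from $\bar R_{\alpha\beta\gamma\delta}$. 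The second, which is the technical heart of the matter, is precisely where the concavity of $\tilde F$ is invoked via \cite[Theorem 4.1]{And07} to dominate the $F^{kl,rs}$-term; the extra reaction terms arising from the ambient geometry beyond Andrews' Euclidean case are manageable because they are controlled by the preserved horoconvexity, which is the very observation Makowski exploited in \cite{Mak12}. Once $T_j^i \geq 0$ is shown to persist on $[0,T^*)$, reading off the eigenvalue bounds and combining with $\kappa_1 \geq 1$ yields the stated pinching estimate.
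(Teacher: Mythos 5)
Your overall strategy (a pinching tensor preserved via Andrews' maximum principle, with the zero-order terms handled by horoconvexity and the gradient terms by inverse concavity) is the right family of ideas, but your specific pinching tensor does not do the job, and the final deduction from it is incorrect. Preserving $T^i_j = C F\,\delta^i_j - h^i_j \geq 0$ only yields $\kappa_n \leq C F$, and for a general $F$ satisfying Assumption \ref{general-assmption} (1) this is compatible with arbitrarily bad pinching: take $F = \tfrac1n H$ (which is concave and inverse concave) and $\kappa = (1,1,\dots,1,K)$ with $K \to \infty$; then $\kappa_n/F \leq n$ stays bounded while $\kappa_n/\kappa_1 = K$ blows up, and the configuration is horoconvex. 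So the ``Kato-type scalar inequality that trades $\kappa_n/F$ for $\kappa_n/\kappa_1$ in the horoconvex regime'' does not exist; horoconvexity gives $\kappa_1 \geq 1$, but since $\kappa_n \to \infty$ as the flow contracts, that lower bound cannot convert $\kappa_n \leq CF$ into $\kappa_1 \geq \epsilon\,\kappa_n$. In addition, even the preservation of your tensor is not justified as stated: the gradient-term inequality you want to quote (\cite[Theorem 4.1]{And94}, with the modification \cite[Theorem B.2]{Mak12}) is proved for lower-pinching tensors built from $h_{ij}$ minus a multiple of $H g_{ij}$ (plus constants), not for $CF g_{ij} - h_{ij}$, and the zero-order terms produced by the ambient curvature $K_N = -1$ do not have an evident sign for your choice at a null eigenvector.

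The paper's proof uses a different tensor, and this is exactly where your argument should be repaired. After using the constant rank theorem to pass to an initial hypersurface with $\kappa_i > 1$ strictly, one sets
\begin{equation*}
T_{ij} = h_{ij} - g_{ij} - \epsilon\,(H-n)\,g_{ij},
\end{equation*}
which encodes the lower pinching $\kappa_1 - 1 \geq \epsilon (H-n)$. For this tensor the hyperbolic curvature terms combine with the $-g_{ij}$ and $-\epsilon(H-n)g_{ij}$ corrections so that at a null eigenvector the zero-order terms reduce to $(1-\epsilon n)\sum_i F_i(\kappa_i - 1)^2\,\abs{\eta}^2 \geq 0$, while the gradient terms are handled precisely by the cited inverse-concavity inequality of Andrews/Makowski; Andrews' maximum principle \cite[Theorem 3.2]{And07} then preserves $T_{ij}\geq 0$ for $\epsilon$ small. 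The conclusion follows because horoconvexity gives $H - n \geq \kappa_n - 1$, hence $\kappa_1 - 1 \geq \epsilon(\kappa_n - 1)$ and so $\kappa_1 \geq \epsilon\,\kappa_n$. In short: the missing idea is to pinch $\kappa_1$ from below against $H$ (shifted by the horoconvexity threshold $1$), not to bound $\kappa_n$ from above by $F$; the latter is too weak to imply the lemma.
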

\begin{proof}
The proof is similar to \cite[Lemma 4.2]{Mak12}. By Replacing $M_0$ by $M(t_0)$ for a $t_0 \in (0,T^*)$ as initial hypersurface,
we can assume that $\kappa_i >1$ on $M_0$
Let $F$ be a concave and inverse concave curvature function, then
\begin{equation}
T_{ij} = h_{ij} - g_{ij} - \epsilon (H-n) g_{ij}
\end{equation}
satisfies the equation
\begin{equation}
\begin{split}
\dot{T}_{ij} - F^{kl} T_{ij;kl} = & \,F^{kl} h_{rk} h^r_l \{ h_{ij} - \epsilon H g_{ij}\} - 2 F h_i^k \{ h_{kj} - \epsilon H g_{kj}\}\\
& + 2 K_N F g_{ij} - 2 n \epsilon K_N F g_{ij} - K_N F^{kl} g_{kl} \{ h_{ij} - \epsilon H g_{ij} \}\\
&- 2 F (\epsilon n -1) h_{ij} + F^{kl,rs} h_{kl;i} h_{rs;j} - \epsilon F^{kl,rs} h_{kl;p} h_{rs;q} g^{pq} g_{ij}\\
\equiv & \, N_{ij} + \tilde{N}_{ij},
\end{split}
\end{equation}
where $\tilde{N}_{ij} = F^{kl,rs} h_{kl;i} h_{rs;j} - \epsilon F^{kl,rs} h_{kl;p} h_{rs;q} g^{pq} g_{ij}$.\\
At every point where $T_{ij} \eta^j =0$ there holds
\begin{equation}
\begin{split}
N_{ij} \eta^i \eta^j = & \, F^{kl} h_{rk} h^r_l (1 - \epsilon n) \abs{\eta}^2 + 2F h_{ij} (\epsilon n -1) \eta^i \eta^j \\
& + \{ F^{kl} g_{kl} - 2F \} (1 - \epsilon n ) \abs{\eta}^2 -2 F (\epsilon n -1) h_{ij} \eta^i \eta^j\\
= & \, (1 - \epsilon n ) \sum_{i} F_i( \kappa_i^2 - 2 \kappa_i + 1) \abs{\eta}^2 \geq 0.
\end{split}
\end{equation}
It is proved in \cite[Theorem 4.1]{And94} (see also the modification in \cite[Theorem B.2]{Mak12}) that
\begin{equation}
\tilde{N}_{ij} \eta^i \eta^j + \sup_{\varGamma} 2 F^{kl} \{ 2 \varGamma_l^r T_{ir;k} \eta^i - \varGamma_k^r \varGamma_l^s T_{rs}\} \geq 0,
\end{equation}
We can choose $\epsilon >0$ sufficiently small, such that $T_{ij} \geq 0$ on $M_0$, then the
Andrews' maximum principle \cite[Theorem 3.2]{And07} implies $T_{ij} \geq 0$ and hence
\begin{equation}
\kappa_1 -1 \geq \epsilon  (H-n)
\end{equation}
during the evolution.
\end{proof}
The following pinching results is due to Gerhardt. By using \cite[Theorem 1.1]{Ger15} and the duality result Lemma \ref{duality-result} we obtain
\begin{thm}
Let $M(t)$ be a solution of the flow $(\ref{flow-eq})$
in $\mathbb{H}^{n+1}$ under the assumption \ref{general-assmption} $(2)$,
then there exists a uniform constant $\epsilon>0$ such that
\begin{equation}
\kappa_1 \geq \epsilon \kappa_n
\end{equation}
during the evolution.
\end{thm}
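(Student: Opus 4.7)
The plan is to reduce the pinching estimate for the contracting flow in $\mathbb{H}^{n+1}$ to the corresponding pinching estimate for the dual expanding flow in de Sitter space $N$, where a result of Gerhardt is already available. The bridge is Lemma~\ref{duality-result} together with the relation $\tilde\kappa_i = \kappa_i^{-1}$ between the principal curvatures of a strictly convex $M\subset\mathbb{H}^{n+1}$ and those of its polar image $\tilde M\subset N$.

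Concretely, I would first invoke Lemma~\ref{duality-result} with $\varPhi(r)=r$ to identify the solution $M(t)$ of (\ref{flow-eq}) with the polar evolution $\tilde M(t)\subset N$ satisfying
\begin{equation*}
\dot{\tilde x} = -\tilde F^{-1}\tilde\nu
\end{equation*}
on the same time interval $[0,T^*)$, with $\tilde M(t)$ strictly convex (as a spacelike hypersurface in $N$) and $\tilde M_0$ the polar of $M_0$. Under assumption~\ref{general-assmption}(2) the curvature function $\tilde F$ is convex, so the curvature function $G:=\tilde F^{-1}$ driving the expanding flow in $N$ is of the inverse-concave / class-$(K)$ type that appears in Gerhardt's pinching framework. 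This sets the stage to apply \cite[Theorem~1.1]{Ger15} to $\tilde M(t)$, yielding a uniform $\tilde\varepsilon>0$ with
\begin{equation*}
\tilde\kappa_1(t) \,\ge\, \tilde\varepsilon\,\tilde\kappa_n(t),\qquad t\in[0,T^*).
\end{equation*}

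Next I would translate this back to $M(t)$. Ordering $\kappa_1\le\cdots\le\kappa_n$ on $M(t)$, the duality $\tilde\kappa_i=\kappa_i^{-1}$ gives the reversed ordering on $\tilde M(t)$, so that the smallest dual principal curvature is $\kappa_n^{-1}$ and the largest is $\kappa_1^{-1}$. The inequality above therefore reads $\kappa_n^{-1}\ge \tilde\varepsilon\,\kappa_1^{-1}$, i.e.\ $\kappa_1\ge\tilde\varepsilon\,\kappa_n$, which is exactly the claimed estimate with $\epsilon=\tilde\varepsilon$.

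The one genuine point to verify, and the step I expect to require the most care, is that the expanding flow in $N$ actually falls under the hypotheses of \cite[Theorem~1.1]{Ger15}: namely, that $G=\tilde F^{-1}$ is a smooth symmetric monotone $1$-homogeneous curvature function of the required concavity/inverse-concavity type on $\varGamma_+$, that the ambient geometry of $N$ (a Lorentzian space form of constant sectional curvature) is covered by that result, and that the sign conventions for the second fundamental form and the normal used in Lemma~\ref{duality-result} match those in \cite{Ger15}. The functional-analytic properties of $G$ follow from the remark at the end of the introduction (convexity of $\tilde F$ makes $F$ of class $(K)$, and the relation $G=\tilde F^{-1}=F\circ(\cdot)^{-1}$ transfers the required structure), and the sign/normal conventions were already reconciled inside the proof of Lemma~\ref{duality-result}. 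Once these ingredients are checked, no further curvature computation is needed; the estimate is purely a consequence of duality plus the cited theorem.
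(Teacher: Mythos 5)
Your proposal is correct and follows essentially the same route as the paper, which likewise obtains this theorem by combining the duality statement of Lemma~\ref{duality-result} (with $\varPhi(r)=r$) with the pinching estimate of \cite[Theorem 1.1]{Ger15} for the dual flow, the relation $\tilde{\kappa}_i=\kappa_i^{-1}$ reversing the ordering exactly as you describe. The paper records no further detail, so your additional check of the hypotheses of \cite{Ger15} is a sensible, but not divergent, elaboration.
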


\section{Contracting flows - convergence to a point}
Fix a point $p_0 \in \mathbb{H}^{n+1}$, the hyperbolic metric in the geodesic polar coordinates centered at $p_0$ can be expressed as
\begin{equation}
\ud \bar{s}^2 = dr^2 + \sinh^2 r \sigma_{ij} dx^i dx^j,
\end{equation}
where $\sigma_{ij}$ is the canonical metric of $\mathbb{S}^n$.\\
Geodesic spheres with center in $p_0$ are totally umbilic. The induced metric, second fundamental form and the principal curvatures of the
coordinate slices $S_r = \{ x^0 = r\}$ are given by
\begin{equation}
\bar{g}_{ij} = \sinh^2 r \sigma_{ij}, \, \bar{h}_{ij} = \tfrac{1}{2} \dot{\bar{g}}_{ij} = \coth r \bar{g}_{ij}, \, \bar{\kappa}_i = \coth r,
\end{equation}
respectively. See \cite[(1.5.12)]{Ger06}.
\begin{lem}
\label{spherical-flow}
Consider $(\ref{flow-eq})$ with initial hypersurface $x(0) = S_{r_0}$, then the corresponding flow exists in a maximal time intervall
$[0, T^*)$ with $T^*$ finite and will shrink to a point. The flow hypersurfaces $M(t)$ are all geodesic spheres with the same center and
their radii $\Theta= \Theta(t)$ solve the ODE
\begin{equation}
\begin{split}
\label{ode-equation}
&\dot{\Theta} = -\coth \Theta,\\
&\Theta(0) = r_0.
\end{split}
\end{equation}
\end{lem}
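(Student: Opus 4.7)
The plan is to reduce the full geometric flow to a scalar ODE by exploiting rotational symmetry, and then to solve the ODE explicitly to read off both the maximal existence time and the collapse to a point.

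First I would invoke a symmetry/uniqueness argument. The initial hypersurface $S_{r_0}$ is invariant under the isotropy subgroup of isometries of $\mathbb{H}^{n+1}$ fixing $p_0$, and the flow equation $\dot{x} = -F\nu$ is geometric (equivariant under isometries of the ambient space since $F$ is a symmetric function of the principal curvatures). By uniqueness of the short-time solution to \eqref{E:flow-eq}, the flow hypersurface $M(t)$ must therefore be invariant under the same isotropy group, i.e.\ a geodesic sphere centered at $p_0$. Denote its radius by $\Theta(t)$, so that in the geodesic polar coordinates above $M(t)=\{r=\Theta(t)\}$.

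Next I would derive \eqref{E:ode-equation}. From the data recalled just before the lemma, the principal curvatures of $S_\Theta$ are all equal to $\coth\Theta$. By the symmetry, $1$-homogeneity, and the normalization $F(1,\dots,1)=1$, this gives
\begin{equation}
F(\kappa_1,\dots,\kappa_n) = F(\coth\Theta,\dots,\coth\Theta) = \coth\Theta.
\end{equation}
The exterior unit normal to $M(t)$ is $\nu = \partial_r$, so tracking a point with fixed angular coordinates in the polar chart, the flow $\dot{x} = -F\nu$ becomes $\dot{\Theta} = -\coth\Theta$, with the stated initial value $\Theta(0)=r_0$.

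Finally I would integrate this separable ODE. Rewriting it as $\tanh\Theta\,d\Theta = -\,dt$ and integrating yields
\begin{equation}
\log\cosh\Theta(t) = \log\cosh r_0 - t,
\end{equation}
hence $\cosh\Theta(t) = e^{-t}\cosh r_0$. The right-hand side is strictly decreasing and reaches the value $1$ at $t=\log\cosh r_0 =: T^*<\infty$, so $\Theta(t)\searrow 0$ as $t\nearrow T^*$ and the flow hypersurfaces contract to the single point $p_0$. As $\Theta>0$ on $[0,T^*)$ the solution remains smooth there, and $T^*$ is manifestly the maximal time of existence. There is no substantive obstacle in this lemma; the only subtlety is invoking uniqueness for the parabolic system \eqref{E:flow-eq} to pass from symmetry of the initial data to symmetry of the evolution, which is standard once short-time existence (already cited from \cite{Ger06}) is in hand.
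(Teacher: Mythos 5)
Your proposal is correct and follows essentially the same route as the paper: reduction to the radial ODE $\dot{\Theta}=-\coth\Theta$ via $F(\coth\Theta,\dots,\coth\Theta)=\coth\Theta$ and explicit integration to $\cosh\Theta=e^{-t}\cosh r_0$, which is exactly the paper's \eqref{E:spherical-sol}. The only cosmetic difference is that the paper writes down the spherical ansatz \eqref{E:ode-flow} and verifies directly that it solves \eqref{E:flow-eq} (uniqueness then identifying it with the flow), whereas you obtain sphericality from equivariance plus uniqueness --- the same fact packaged differently.
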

\begin{proof}
We set
\begin{equation}
\begin{split}
\label{ode-flow}
x^0(t,\xi) &= \Theta(t),\\
x^i(t, \xi) &= x^i(0 , \xi).
\end{split}
\end{equation}
In view of \cite[(1.5.7)]{Ger06} the exterior normal of a geodesic sphere is (1,0,\dots,0). Using that $F(\bar{h}^i_j) = \coth \Theta$,
we see that $x$ in (\ref{ode-flow}) solves the flow equation (\ref{flow-eq}). Now the solution of (\ref{ode-equation}) is given by
\begin{equation}
\label{spherical-sol}
\cosh \Theta = (\cosh r_0) e^{-t}.
\end{equation}
Thus the spherical flow exists only for a finite time $[0,T^*)$. Note that (\ref{spherical-sol}) can be rewritten as
\begin{equation}
\label{sol-spherical-flow}
\Theta = \mathrm{arccosh} \, e^{(T^*-t)}.
\end{equation}
\end{proof}

Next we want to prove that the flow (\ref{flow-eq}) shrinks to a point.
Using the inverse of the Beltrami map, $\mathbb{H}^{n+1}$ is parametrizable over $B_1(0)$ yielding the metric (cf. \cite[Section 10.2]{Ger06})
\begin{equation}
d \bar{s}^2 =  \frac{1}{(1-r^2)^2} \, dr^2 + \frac{r^2}{1-r^2}\sigma_{ij} d \xi^i d \xi^j.
\end{equation}

Define the variable $\varrho$ by
\begin{equation}
\label{relation-rho-r}
\varrho = \mathrm{arctanh} r = \tfrac{1}{2} (\log(1+r) -\log(1-r)),
\end{equation}
then
\begin{equation}
d \bar{s}^2  = d \varrho^2 + \sinh^2 \varrho \, \sigma_{ij}\, d \xi^i d \xi^j.
\end{equation}
Let
\begin{equation}
\label{euclidean-polar-coordinates}
d \tilde{s}^2 = dr^2 + r^2 \sigma_{ij} d \xi^i d \xi^j
\end{equation}
be the Euclidean metric over $B_1(0)$.
Define
\begin{equation}
d \tau = \frac{1}{r \sqrt{1-r^2}} dr, \quad d \tilde{\tau} = r^{-2}  dr,
\end{equation}
we have further
\begin{equation}
\begin{split}
d \bar{s}^2 &= \frac{r^2}{1-r^2} \{ d \tau^2 + \sigma_{ij} d \xi^i d \xi^j \} \equiv e^{2 \psi} \{ d \tau^2 + \sigma_{ij} d \xi^i d \xi^j \},\\
d \tilde{s}^2 &= r^2 \{ d \tilde{\tau}^2 + \sigma_{ij} d \xi^i d \xi^j \} \equiv e^{2 \tilde{\psi}} \{ d \tilde{\tau}^2 + \sigma_{ij} d \xi^i d \xi^j\}.
\end{split}
\end{equation}
An arbitary closed, connected, strictly embedded hypersurface $M \subset \mathbb{H}^{n+1}$ bounds a convex body and we can write $M$
as a graph in geodesic polar coordinates.
\begin{equation}
M = \mathrm{graph }\,  u = \{ \tau = u(x): x \in \mathbb{S}^n\}.
\end{equation}
$M$ can also be viewed as a graph $\tilde{M}$ in $B_1(0)$ with respect to the Euclidean metric
\begin{equation}
\tilde{M} = \mathrm{graph } \, \tilde{u} = \{\ \tilde{\tau} = \tilde{u} (x): x \in \mathbb{S}^n \}.
\end{equation}
Writing $\tilde{u} = \varphi (u)$, then there holds (see \cite[(10.2.18)]{Ger06})
\begin{equation}
\dot{\varphi}^2 = 1 - r^2.
\end{equation}

The same argument as in \cite[Lemma 6.1]{Ger13} yields
\begin{lem}
\label{compare-spherical-barrier}
Let $M(t)$ be a solution of $(\ref{flow-eq})$ on a maximal time interval $[0,T^*)$ and represent $M(t)$, for a fixed $t \in [0,T^*)$, as
a graph in polar coordinates with center in $x_0 \in \hat{M}(t)$
\begin{equation}
M(t) = \mathrm{graph } \, u(t, \cdot),
\end{equation}
then
\begin{equation}
\inf_{M(t)} u \leq \Theta(t,T^*) \leq \sup_{M(t)} u,
\end{equation}
where the solution of the spherical flow $\Theta(t,T^*)$ is given by $(\ref{sol-spherical-flow})$. \qed
\end{lem}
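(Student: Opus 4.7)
\smallskip
\noindent\textbf{Proof plan.} The idea is to use geodesic spheres centered at $x_0$ as inner and outer barriers and invoke the avoidance principle together with the explicit lifetime of the spherical flow from Lemma \ref{spherical-flow}. Fix $t_0\in[0,T^*)$ and $x_0\in\hat M(t_0)$. Since $M(t_0)$ is the boundary of a convex body containing $x_0$ and is written as $\graph u(t_0,\cdot)$ in polar coordinates around $x_0$, the two quantities
\begin{equation*}
r := \inf_{\mathbb S^n} u(t_0,\cdot), \qquad R := \sup_{\mathbb S^n} u(t_0,\cdot)
\end{equation*}
are the inscribed and circumscribed radii. Recall from \re{spherical-sol} that the spherical flow with initial radius $\rho$ centered at $x_0$ has lifetime $\log\cosh\rho$, and, more generally, the spherical solution $\Theta(\cdot,T^*)$ satisfies $\cosh\Theta(t_0,T^*)=e^{T^*-t_0}$.

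For the outer barrier I would fix $\eta>0$ and start a spherical flow centered at $x_0$ of radius $R+\eta$ at time $t_0$; by construction $M(t_0)$ is strictly contained in the ball of radius $R+\eta$. The avoidance/comparison principle for the contracting curvature flow $(\ref{flow-eq})$ (a standard maximum principle argument, using the normalization and monotonicity of $F$ together with $F(1,\ldots,1)=1$) keeps $M(s)$ strictly inside the evolving sphere $S(s)$ for all $s$ on which both flows exist. Since $S(s)$ shrinks to the point $x_0$ at time $t_0+\log\cosh(R+\eta)$, the same must be true of $M(s)$ by continuity, so
\begin{equation*}
T^* \le t_0+\log\cosh(R+\eta),
\end{equation*}
i.e.\ $\cosh(R+\eta)\ge e^{T^*-t_0}=\cosh\Theta(t_0,T^*)$, hence $R+\eta\ge\Theta(t_0,T^*)$. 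Letting $\eta\to 0$ gives the upper inequality $\Theta(t_0,T^*)\le\sup_{M(t_0)}u$.

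For the inner barrier the roles are reversed: for $\eta>0$ small the ball of radius $r-\eta$ is strictly contained in $\hat M(t_0)$, and the spherical flow from $S_{r-\eta}$ stays inside $M(s)$ as long as it exists. This inner sphere must vanish before $M$ does, so $t_0+\log\cosh(r-\eta)\le T^*$, which by the same manipulation yields $r-\eta\le\Theta(t_0,T^*)$, and letting $\eta\to 0$ gives $\inf_{M(t_0)}u\le\Theta(t_0,T^*)$.

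The only non-routine step is the avoidance principle itself, which is the expected obstacle; however since one barrier is a totally umbilic round sphere and the other hypersurface satisfies the same flow equation with $F$ symmetric, monotone and $1$-homogeneous, one may write the squared geodesic distance between the two evolving surfaces and apply the parabolic maximum principle exactly as in \cite[Lemma 6.1]{Ger13}, so no new idea is needed beyond transcribing that argument to the hyperbolic setting.
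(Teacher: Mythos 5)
Your overall strategy --- comparing with geodesic spheres centred at $x_0$ and using the explicit spherical solution (\ref{spherical-sol}) together with the avoidance principle --- is exactly the argument the paper relies on (its proof is a reference to \cite[Lemma 6.1]{Ger13}), and your treatment of the upper inequality is complete: if $M(t_0)$ lies inside the sphere of radius $R+\eta$ about $x_0$, avoidance confines $M(s)$ to balls whose radii tend to $0$ at time $t_0+\log\cosh(R+\eta)$, and since a smooth closed hypersurface cannot be contained in the intersection of such balls, $T^*\le t_0+\log\cosh(R+\eta)$, whence $\Theta(t_0,T^*)\le\sup_{M(t_0)}u$ after letting $\eta\to 0$.

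The inner inequality, however, has a genuine gap at the sentence ``this inner sphere must vanish before $M$ does''. The avoidance principle only guarantees that the two configurations remain disjoint \emph{as long as both flows exist}; it does not by itself exclude that the enclosing flow reaches its maximal time $T^*$ (for instance through curvature blow-up) while it still encloses a sphere of definite radius, so the conclusion $t_0+\log\cosh(r-\eta)\le T^*$ does not follow from avoidance alone. (For non-convex mean curvature flow the corresponding assertion is simply false: a dumbbell pinches at the neck while still enclosing a ball in one of its bells, so some input beyond avoidance is unavoidable.) What is needed is a continuation criterion: as long as the convex flow hypersurfaces stay in a fixed compact region and enclose a ball of fixed positive radius, the curvatures and all derived quantities remain bounded and the flow can be extended past any such time; only with this does the assumption $T^*<t_0+\log\cosh(r-\eta)$ contradict the maximality of $T^*$. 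This ingredient comes from the convexity, pinching and curvature estimates of the kind developed in the paper and in \cite{Ger13}, and it should be invoked explicitly rather than hidden in the word ``must''. A minor further point: the avoidance principle is not \cite[Lemma 6.1]{Ger13} (that is the barrier lemma itself); since one barrier is a totally umbilic geodesic sphere, the cleanest justification is to compare the scalar graph equation for $u$ with the ODE (\ref{ode-equation}) via the scalar parabolic maximum principle.
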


\begin{lem}
\label{boundedness-tilde-u}
Let $x_0 \in \hat{M}(t)$ be as above and represent $M(t)$ in Euclidean polar coordinates $(\ref{euclidean-polar-coordinates})$,
 then there exists a constant $c_0 = c_0(M_0)<1$ such that the estimate
\begin{equation}
r \leq c_0
\end{equation}
holds for any $t \in [0, T^*).$
\end{lem}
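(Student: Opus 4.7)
The plan is to reduce the Euclidean bound $r \leq c_0 < 1$ to a uniform hyperbolic-distance bound between $x_0$ and points on $M(t)$, and then convert via the Beltrami chart.

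The geometric input is that the flow is contracting: since $\dot{x} = -F\nu$ with $F > 0$ and $\nu$ the outward normal, every point of $M(t)$ moves strictly inward. A standard application of the parabolic avoidance principle, comparing $M(t)$ with the spherical flow from Lemma \ref{L:spherical-flow} applied to an enclosing geodesic sphere $S_{R_0}(p_0) \supset M_0$, gives
\begin{equation*}
\hat{M}(t) \subset \overline{B_{R_0}(p_0)} \qquad \text{for every } t \in [0, T^*),
\end{equation*}
with $R_0 = R_0(M_0)$. In particular, $\hat{M}_0$ (and hence every $\hat{M}(t)$) is a compact subset of $\mathbb{H}^{n+1}$ whose hyperbolic diameter $d_0 = d_0(M_0) \leq 2R_0$ is finite.

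Now let $x_0 \in \hat{M}(t)$ be as in the statement. Since both $x_0$ and any $y \in M(t)$ lie in $\hat{M}_0 \subset \overline{B_{R_0}(p_0)}$, the triangle inequality yields $d_{\mathbb{H}^{n+1}}(x_0, y) \leq d_0$. By applying an isometry of $\mathbb{H}^{n+1}$ carrying $x_0$ to the Beltrami center (permitted since $\mathbb{H}^{n+1}$ is homogeneous), the relation \eqref{E:relation-rho-r} takes the form $\varrho = \mathrm{arctanh}\, r$, where $\varrho$ is the hyperbolic distance from $x_0$ and $r$ is the Euclidean radial coordinate in the Beltrami chart centered at $x_0$. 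Therefore
\begin{equation*}
r = \tanh \varrho \leq \tanh(d_0) =: c_0 < 1,
\end{equation*}
with $c_0 = c_0(M_0)$, which is the desired estimate.

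The main obstacle is establishing the avoidance/monotonicity $\hat{M}(t) \subset \overline{B_{R_0}(p_0)}$ rigorously for the general class of curvature functions permitted by Assumption \ref{Ass:general-assmption}. Though geometrically evident from the inward motion $\dot x = -F\nu$ with $F>0$, a formal proof invokes the parabolic maximum principle applied to the scalar equation for a graph representation of $M(t)$ measured against the spherical barrier of Lemma \ref{L:spherical-flow}; this is standard in the theory of contracting hypersurface flows and also gives an a posteriori upper bound $T^* \leq \log \cosh R_0$ on the maximal existence time.
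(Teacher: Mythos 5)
Your argument is correct and essentially the same as the paper's: both reduce the claim to a uniform bound on the hyperbolic radial coordinate $\varrho$ of $M(t)$ about $x_0$ and then conclude via $r=\tanh\varrho<1$. The only cosmetic difference is in how the containment in a fixed compact set is obtained — the paper simply notes, from the scalar flow equation with $F>0$, that the convex bodies $\hat{M}(t)$ are decreasing in $t$ (so everything stays inside $\hat{M}_0$), whereas you invoke an avoidance argument against an enclosing spherical barrier, which is a slightly heavier but equally valid route to the same uniform bound on $\varrho$.
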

\begin{proof}
The argument is similar to those in \cite[Lemma 6.3, Remark 6.5]{Ger13}. Looking at the scalar flow equation for a short time
interval, we conclude that the convex bodies $\hat{M}(t) \subset \mathbb{H}^{n+1}$ are decreasing with respect to $t$. Furthermore, $\hat{M}_0$ is
strictly convex. Thus $ \varrho $ is uniformly bounded and the claim follows from the relation
\begin{equation}
r = \tanh \varrho = 1 - \frac{2}{e^{2 \varrho} +1}.
\end{equation}
\end{proof}

Denote $h_{ij}$ resp. $\tilde{h}_{ij}$ the second fundamental forms and $\kappa_i$ resp $\tilde{\kappa}_i$ the principal curvatures
of $M$ with respect to the ambient metric $\bar{g}_{\alpha \beta}$ resp.
$\tilde{g}_{\alpha \beta}$.
\begin{lem}
The principal curvatures $\tilde{\kappa}_i$ of $M(t)$ are pinched, i.e., there exists a uniform constant $c$ such that
\begin{equation}
\label{pinching-estimates-euclidean}
\tilde{\kappa}_n \leq c \tilde{\kappa}_1,
\end{equation}
where the $\tilde{\kappa}_i$ are labeled as
\begin{equation}
\tilde{\kappa}_1 \leq \cdots \leq \tilde{\kappa}_n.
\end{equation}
\end{lem}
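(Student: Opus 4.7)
The plan is to exploit a pointwise scalar proportionality between the hyperbolic and Euclidean second fundamental forms of $M(t)$, together with the uniform equivalence of their induced metrics on the compact region $\{r \le c_0\} \Subset B_1(0)$ provided by Lemma \ref{boundedness-tilde-u}.

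I would first write $M(t)$ as a graph over $\mathbb{S}^n$ in both coordinate systems: in hyperbolic geodesic polar coordinates as $\varrho = u(\xi)$ and in Euclidean polar coordinates as $r = \tilde u(\xi) = \tanh u(\xi)$. Applying the standard warped product graph formula for the second fundamental form to $d\varrho^2 + \sinh^2 \varrho\, \sigma_{ij}\, d\xi^i d\xi^j$ and to $dr^2 + r^2 \sigma_{ij}\, d\xi^i d\xi^j$, and substituting $u = \mathrm{arctanh}\, \tilde u$ via $\sinh u = \tilde u/\sqrt{1-\tilde u^2}$, $\cosh u = 1/\sqrt{1-\tilde u^2}$ and $\coth u = 1/\tilde u$, a direct calculation should produce the scalar identity
\[
h_{ij} = \lambda\, \tilde h_{ij}, \qquad \lambda = \frac{\tilde v}{v\,(1-\tilde u^2)},
\]
with $v, \tilde v$ the respective gradient-norm factors. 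An elementary estimate shows $\lambda \in [1, (1-c_0^2)^{-1}]$; in particular $\lambda \to 1$ as $\tilde u \to 0$, so there is no degeneration near the point of contraction.

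A parallel computation gives $g_{ij} = (1-\tilde u^2)^{-1}\tilde g_{ij} + \tilde u^2 (1-\tilde u^2)^{-2} \tilde u_i \tilde u_j$, so that $\tilde g \le g \le C\, \tilde g$ on $\{\tilde u \le c_0\}$ for a constant $C$ depending only on $c_0$ and the $C^1$-bound for $\tilde u$ (which is automatic from strict convexity together with Lemma \ref{boundedness-tilde-u}). Writing
\[
\tilde g^{-1}\tilde h = \lambda^{-1} (\tilde g^{-1} g)(g^{-1} h),
\]
the eigenvalues $\tilde\kappa_i$ are then $\lambda^{-1}$ times the eigenvalues of a product of two $\tilde g$-symmetric positive operators: the first with eigenvalues confined to $[1,C]$, the second being the shape operator $g^{-1}h$ with eigenvalues $\kappa_i$ pinched by Lemma \ref{pinching-estimates-thm} (in case (1) of Assumption \ref{general-assmption}) or by the dual estimate following it (in case (2)). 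Standard Weyl-type inequalities for products of positive self-adjoint operators then transfer the multiplicative pinching, yielding $\tilde\kappa_n \le c\, \tilde\kappa_1$ with a uniform constant.

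The main technical step is the algebraic cancellation leading to the scalar proportionality $h_{ij} = \lambda\, \tilde h_{ij}$; this collapses what could have been only an affine relation between the two second fundamental forms into a pointwise scaling, making the passage from hyperbolic pinching to Euclidean pinching essentially automatic. Without this simplification, one would face eigenvalue perturbation of an affine map between shape operators, which is considerably more delicate; the present approach exploits the special structure of the Beltrami parametrisation to bypass that difficulty entirely.
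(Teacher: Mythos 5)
Your proposal is correct and follows essentially the same route as the paper: a pointwise scalar relation between the hyperbolic and Euclidean second fundamental forms (the paper quotes it as $\tilde h_{ij}\tilde v=(1-r^2)h_{ij}v$ from \cite[(10.2.33)]{Ger06}, which is your $h_{ij}=\lambda\,\tilde h_{ij}$), uniform equivalence of the induced metrics via $r\le c_0<1$ from Lemma \ref{boundedness-tilde-u}, and then transfer of the pinching of the $\kappa_i$ to the $\tilde\kappa_i$ by eigenvalue comparison (the paper's ``maximum--minimum principle'', your product-of-operators inequality). The only cosmetic differences are that the paper works in the conformal coordinates $\tau,\tilde\tau$ and cites the identity rather than deriving it, and that your $C^1$-bound on $\tilde u$ is not even needed since $\tilde u_i\tilde u_j\le\tilde g_{ij}$ pointwise.
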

\begin{proof}
The $h_{ij}$ and $\tilde{h}_{ij}$ are related through the formula (see \cite[(10.2.33)]{Ger06})
\begin{equation}
\tilde{h}_{ij} \tilde{v} = (1-r^2) h_{ij} v,
\end{equation}
where
\begin{equation}
\begin{split}
v^2 = 1 + \sigma^{ij} u_i u_j,\\
\tilde{v}^2 = 1 +  \dot{\varphi}^2 \sigma^{ij} u_i u_j.
\end{split}
\end{equation}
Because of Lemma \ref{boundedness-tilde-u} there exists $0 < \delta <1$ such that
\begin{equation}
r^2 \leq 1 - \delta,
\end{equation}
and thus
\begin{equation}
\delta v^2 \leq \tilde{v}^2 \leq v^2,
\end{equation}
\begin{equation}
\delta h_{ij} \leq \tilde{h}_{ij} \leq \delta^{-1} h_{ij}.
\end{equation}
Furthermore, there holds
\begin{equation}
\begin{split}
g_{ij} &= \frac{r^2}{1-r^2} \{ u_i u_j + \sigma_{ij}\},\\
\tilde{g}_{ij} &= r^2 \{ \dot{\varphi}^2 u_i u_j + \sigma_{ij}\}.
\end{split}
\end{equation}
and we conclude
\begin{equation}
\delta^2 g_{ij} \leq \tilde{g}_{ij} \leq g_{ij}.
\end{equation}
Now the claim follows from the maximum-minimum principle.
\end{proof}

For $\hat{M}(t) \subset \mathbb{H}^{n+1}$, the inradius $\rho_-(t)$ and circumradius $\rho_+(t)$ of $\hat{M}(t)$ are defined by
\begin{equation}
\begin{split}
\rho_-(t) =& \sup \{ r: B_r(y) \textrm{ is enclosed by } \hat{M}(t) \textrm{ for some } y \in \mathbb{H}^{n+1}\},\\
\rho_+(t) =& \inf \{ r: B_r(y) \textrm{ encloses } \hat{M}(t) \textrm{ for some } y \in \mathbb{H}^{n+1}\}.
\end{split}
\end{equation}
Now, choose $x_0 \in \hat{M}(t)$ to be the center of the inball of $\hat{M}(t) \subset \mathbb{H}^{n+1}$ and let $x_0$ be
the center of the geodesic polar coordinates. Note that the center of the Euclidean inball is also $x_0$.
Let $\rho_-(t)$ resp. $\rho_+(t)$ be the inradius resp. circumradius of $\hat{M}(t) \subset \mathbb{H}^{n+1}$,
and let $\tilde{\rho}_-(t)$ resp. $\tilde{\rho}_+(t)$ be the inradius resp.
circumradius of $\hat{M}(t) \subset \mathbb{R}^{n+1}$.
\begin{lem}
\label{shrink-the-convex-body}
Let $B_{{\rho}_-(t)}(x_0) \subset \hat{M}(t)$ be a geodesic inball, then there exist positive constants $c$ and $\delta$, such that
\begin{equation}
\hat{M}(t) \subset B_{4c \rho_-(t)}(x_0) \quad \forall t \in [T^* - \delta, T^*).
\end{equation}
\end{lem}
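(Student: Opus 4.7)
The plan is to pass to the Euclidean coordinates supplied by the inverse Beltrami map, invoke the Euclidean pinching \eqref{pinching-estimates-euclidean}, and exploit that $\hat{M}(t)$ shrinks to $x_0$, which makes the hyperbolic and Euclidean metrics pointwise comparable on $\hat M(t)$.

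First I would observe that $\rho_-(t)\to 0$ as $t\to T^*$. Applying Lemma~\ref{compare-spherical-barrier} with $x_0$ the inball centre and using \eqref{sol-spherical-flow}, $\inf u(t,\cdot)\leq \Theta(t,T^*)\to 0$, so the hyperbolic diameter of $\hat M(t)$ tends to $0$. On this shrinking region, the conformal factors $e^{\psi}$ and $e^{\tilde\psi}$ introduced above differ by a factor tending to $1$, so there exists $\delta_0>0$ with $\tfrac12\rho_\pm(t)\leq\tilde\rho_\pm(t)\leq 2\rho_\pm(t)$ for $t\in[T^*-\delta_0,T^*)$. This reduces the claim to bounding the Euclidean circumradius $\tilde\rho_+(t)$ by a uniform multiple of the Euclidean inradius $\tilde\rho_-(t)$.

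Second, in Euclidean coordinates $\hat M(t)$ is a smooth strictly convex body whose principal curvatures are pinched by \eqref{pinching-estimates-euclidean}. I would prove the auxiliary convex-body statement: any smooth strictly convex $K\subset\mathbb{R}^{n+1}$ with $\tilde\kappa_n\leq c\tilde\kappa_1$ pointwise satisfies $\tilde\rho_+\leq C(c,n)\tilde\rho_-$. The ingredients are the touch-point identities $\tilde\kappa_i(p)\geq 1/\tilde\rho_+$ at the circumball touching point $p$ and $\tilde\kappa_i(q)\leq 1/\tilde\rho_-$ at the inball touching point $q$, the global identity $\int_{\partial K}K_n\,d\sigma=|\mathbb{S}^n|$, and the pinched bounds $\tilde\kappa_1^n\leq K_n\leq c^n\tilde\kappa_1^n$. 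Equivalently, in terms of the support function $h$ of $K$, the matrix $h_{ij}+h\sigma_{ij}$ of principal radii has pinched eigenvalues, is therefore pointwise close to a multiple of the identity, and hence $h$ is close on $\mathbb{S}^n$ to the support function of a shifted ball; this yields $\max h/\min h\leq C$ and therefore $\tilde\rho_+\leq C\tilde\rho_-$.

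Combining the two steps gives $\rho_+(t)\leq 2\tilde\rho_+(t)\leq 2C\tilde\rho_-(t)\leq 4C\rho_-(t)$ on $[T^*-\delta_0,T^*)$, which is the claim with the constant $c$ of the statement taken to be $C$ and $\delta=\delta_0$. The main obstacle is the Euclidean convex-body fact: pointwise pinching does not reduce to information at the single points $p$ and $q$, so a global ingredient (such as the Gauss--Kronecker integral above or a support-function PDE argument) is needed to connect them; once that is in hand the hyperbolic-to-Euclidean comparison is routine thanks to the shrinking of the flow to the point $x_0$.
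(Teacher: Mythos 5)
Your overall route is the same as the paper's (pass to the Beltrami/Euclidean picture, use the Euclidean pinching \eqref{pinching-estimates-euclidean} to compare circumradius and inradius there, then transfer back to $\mathbb{H}^{n+1}$), but your first step is justified circularly. From Lemma~\ref{compare-spherical-barrier} you only get $\inf_{M(t)}u\le\Theta(t,T^*)\to0$, i.e.\ the \emph{inradius} tends to zero; this does not imply that the hyperbolic diameter of $\hat M(t)$ tends to zero -- that is, up to a constant, exactly the statement the lemma is supposed to prove, so it cannot be used to obtain the factor-$2$ comparability $\tfrac12\rho_\pm\le\tilde\rho_\pm\le2\rho_\pm$. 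What is actually available at this stage is Lemma~\ref{boundedness-tilde-u}, $r\le c_0<1$ uniformly, which (since the bodies are Euclidean-convex in the Beltrami model, geodesics being straight lines) gives uniform, though not $\to1$, comparability of the two metrics along segments inside $\hat M(t)$; that is enough, since the lemma only asks for some constant. Alternatively, order the argument as the paper does: apply the Euclidean radii comparison first to get $\tilde\rho_+\le c\,\tilde\rho_-$, combine with the Euclidean barrier \eqref{euclidean-ineq} to see $\tilde\rho_+\le c\,\tilde\Theta\to0$, and only then convert the small Euclidean circumball centered at $x_0$ into a hyperbolic ball using $\rho=\mathrm{arctanh}\,\tilde\rho\le 2\tilde\rho$ (valid once $\tilde\rho$ is small, which fixes $\delta$) together with $\tilde\rho_-\le\rho_-$.

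The second gap is the Euclidean convex-body fact itself: that pointwise pinching $\tilde\kappa_n\le c\,\tilde\kappa_1$ forces $\tilde\rho_+\le C(c,n)\,\tilde\rho_-$. Your ingredients do not assemble into a proof: the touch-point inequalities at the circumball and inball points are pointwise and disconnected, the identity $\int_{\partial K}K_n\,d\sigma=|\mathbb{S}^n|$ does not by itself relate them, and the step ``the Weingarten-type matrix $h_{ij}+h\sigma_{ij}$ has pinched eigenvalues, hence $h$ is close to the support function of a shifted ball'' is itself a nontrivial theorem of essentially the same depth as the claim -- as you yourself note when flagging the obstacle. This is precisely Andrews' Theorem 5.1 and Theorem 5.4 in \cite{And94}, which the paper simply quotes; you should either cite them as well or reproduce Andrews' width/projection argument in full. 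With these two repairs your proposal coincides with the paper's proof.
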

\begin{proof}
The pinching estimates in the Euclidean ambient space (\ref{pinching-estimates-euclidean}) and
\cite[Theorem 5.1, Theorem 5.4]{And94}  imply
\begin{equation}
\label{pinching-ineq}
\tilde{\rho}_+(t) \leq c \tilde{\rho}_-(t)
\end{equation}
with a uniform constant $c$, hence $\hat{M}(t)$ is contained in the Euclidean ball $B_{\tilde{\rho}}(0)$,
\begin{equation}
\hat{M}(t) \subset B_{\tilde{\rho}}(0), \quad \tilde{\rho}(t) = 2c \tilde{\rho}_-(t).
\end{equation}
Furthermore, we deduce from Lemma \ref{compare-spherical-barrier} that
\begin{equation}
\label{euclidean-ineq}
\inf_{M(t)} \tilde{u} \leq \tilde{\Theta} \leq \sup_{M(t)} \tilde{u},
\end{equation}
where $M(t) = \textrm{graph }\, \tilde{u}$ is a representation of $M(t)$ in Euclidean polar coordinates.
We conclude further
\begin{equation}
\tilde{\rho}(t) = 2c \tilde{\rho}_-(t) \leq 2c \tilde{\Theta}.
\end{equation}
Choose now $\delta>0$ small such that
\begin{equation}
2c \tilde{\Theta}(t,T^*) \leq 1 \quad \forall t \in [T^*-\delta,T^*).
\end{equation}
Now it holds for
\begin{equation}
\rho(t) = \mathrm{ arctanh } \, \tilde{\rho}(t)
\end{equation}
\begin{equation}
\hat{M}(t) \subset B_{\rho(t)}(x_0) \subset \mathbb{H}^{n+1}.
\end{equation}
Since
\begin{equation}
\tilde{\rho}(t) \leq 1,
\end{equation}
we conclude further
\begin{equation}
\tilde{\rho} \leq \rho \leq 2 \tilde{\rho}, \quad \tilde{\rho}_- \leq \rho_-.
\end{equation}
Thus
\begin{equation}
\rho \leq 2 \tilde{\rho} = 4c \tilde{\rho}_- \leq 4c \rho_-
\end{equation}
and the claim follows.
\end{proof}
\begin{lem}
\label{smooth-estimates-u}
During the evolution the flow hypersurfaces $M(t)$ are smooth and uniformly convex satisfying a priori estimates in any compact subinterval
$[0,T] \subset [0, T^*)$.
\end{lem}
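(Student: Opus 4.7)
The plan is to represent $M(t)$ as the radial graph $u(t,\cdot)$ in geodesic polar coordinates centered at a fixed interior point, reduce the flow to a scalar parabolic equation for $u$, and then bootstrap estimates $C^0 \to C^1 \to C^2 \to C^{2,\alpha} \to C^\infty$ on every compact subinterval $[0,T] \subset [0,T^*)$.

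First I would establish the $C^0$ bounds. Since $F>0$ and $\nu$ is outward-pointing, the convex bodies $\hat M(t)$ are strictly decreasing in $t$, which gives the upper bound $u \leq \sup_{\mathbb{S}^n} u(0,\cdot)$. For the lower bound, Lemma \ref{compare-spherical-barrier} provides $\sup_{M(t)} u \geq \Theta(t,T^*)$, and on $[0,T]$ the spherical solution $\Theta(t,T^*)$ of (\ref{ode-equation}) is bounded below by a positive constant; combining this with Lemma \ref{shrink-the-convex-body} yields a uniform positive lower bound on $u$ when the polar coordinates are centered at the inball center. The gradient estimate $|Du| \leq c(T)$ then follows from the (horo-)convexity of $\hat M(t)$ and the uniform $C^0$ control, via the standard maximum-principle argument applied to $v^2 = 1 + \sinh^{-2}u\,\sigma^{ij}u_iu_j$ (cf.\ \cite[Chapter 2]{Ger06}).

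The principal difficulty is the $C^2$ estimate, specifically a uniform upper bound on the largest principal curvature $\kappa_n$. The pinching Lemma \ref{pinching-estimates-thm} (and its analogue under assumption (2)) gives $\kappa_1 \geq \epsilon \kappa_n$, reducing the task to an upper bound for $F$. I would obtain this by computing the evolution equation of $F$ along (\ref{flow-eq}) and invoking the parabolic maximum principle: since the flow exists on the maximal interval $[0,T^*)$, $F$ can only blow up as $t \to T^*$, hence $F \leq c(T)$ on $[0,T]$. A matching lower bound $F \geq c(T) > 0$ is derived analogously by a maximum principle argument for $-F^{-1}$, so together with the pinching all principal curvatures lie in a compact subinterval of $(0,\infty)$.

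Once $0 < c_1(T) \leq \kappa_i \leq c_2(T)$ are in hand, the scalar equation for $u$ becomes uniformly parabolic; since $F$ is concave, the Krylov--Safonov H\"older estimates for concave fully nonlinear parabolic equations yield a uniform $C^{2,\alpha}$ bound. Linear parabolic Schauder theory then bootstraps this to uniform $C^\infty$ estimates on $[0,T]$, completing the a priori estimates.
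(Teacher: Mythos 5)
Your outline of the $C^0$ and $C^1$ steps matches the paper's proof (inradius comparable to $\Theta(T,T^*)$ via the Euclidean pinching \eqref{pinching-ineq} and \eqref{euclidean-ineq}, polar coordinates centered at the inball center of $\hat M(T)$ covering all of $[0,T]$, boundedness of $v$ from convexity), and so does the final regularity step (Krylov--Safonov, then Schauder). The genuine gap is in the crucial $C^2$ step, the uniform upper bound for $\kappa_n$ (equivalently for $F$). The evolution equation of $F$ along \eqref{flow-eq} is
\begin{equation*}
\dot F - F^{ij}F_{;ij} \;=\; F\,F^{ij}h_{ik}h^k_j \;-\; F\,F^{ij}g_{ij},
\end{equation*}
whose reaction term is cubic in the curvature; with the pinching estimate the maximum principle only yields a Riccati-type inequality $\tfrac{d}{dt}F_{\max}\le c\,F_{\max}^3$, whose comparison solution blows up in finite time, so this argument alone cannot produce a bound on all of $[0,T]$ for $T$ arbitrarily close to $T^*$. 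Your fallback, ``since the flow exists on the maximal interval $[0,T^*)$, $F$ can only blow up as $t\to T^*$, hence $F\le c(T)$,'' is not an a priori estimate at all: it is just continuity of a smooth solution on the compact set $[0,T]\times\mathbb S^n$, with a constant depending on the solution itself rather than on the data. Read that way the lemma becomes vacuous and, more importantly, useless for what it is needed for afterwards: to conclude that the flow shrinks to a point one must know that the curvature stays bounded as long as the inradius (equivalently $\Theta(t,T^*)$) stays bounded away from zero, i.e.\ the upper bound must be quantified in terms of $M_0$ and $\Theta(T,T^*)$. The paper obtains exactly such a bound by the argument of \cite[Theorem 6.6]{Ger13}, a Tso-type maximum-principle estimate for an auxiliary function coupling the largest principal curvature with a quantity such as $\chi=v/\sinh u$, which exploits the positive lower bound $u\ge\rho_-(T)\ge\tfrac c2\Theta(T,T^*)$ established in the first part of the proof. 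Some version of this (or another genuine estimate) is indispensable; your proposal omits it.

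A secondary discrepancy concerns the lower curvature bound. The paper does not run a maximum principle for $F$ from below: under Assumption \ref{general-assmption}~(1) it simply uses the preserved horoconvexity $\kappa_i\ge1$, and under Assumption \ref{general-assmption}~(2) it invokes the dual-flow estimate \cite[Lemma 4.4]{Ger15}, which gives $\tilde F\le c$ in $N$ and hence $\kappa_i\ge c>0$ in $\mathbb H^{n+1}$. Your proposed maximum principle for $-F^{-1}$ could likely be salvaged once the pinching estimate is used to bound $F^{ij}g_{ij}$ (and it is harmless under~(1)), but as written it is not justified under~(2) and is in any case not the route the paper takes.
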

\begin{proof}
Let $0<T < T^*$ be fixed.
From (\ref{pinching-ineq}) and (\ref{euclidean-ineq}) we infer
\begin{equation}
c \tilde{\Theta}(T, T^*) \leq  \tilde{\rho}_- (T).
\end{equation}
Since
\begin{equation}
\Theta(T,T^*) = \mathrm{arctanh} \tilde{\Theta}(T,T^*), \quad \rho_-(T) = \mathrm{arctanh} \tilde{\rho}_-(T),
\end{equation}
and $\tilde{\rho}_-(T)$, $\tilde{\Theta}(T,T^*)$ are uniformly bounded from above by 1 we infer that
\begin{equation}
0< \tfrac{c}{2} \Theta = \tfrac{c}{2} \mathrm{arctanh} \tilde{\Theta} \leq c \tilde{\Theta} \leq \mathrm{arctanh} (c \tilde{\Theta}) \leq \rho_-(T).
\end{equation}
Let $x_0 \in \hat{M}(T)$ be the center of an inball and introduce geodesic polar coordinates with center $x_0$.
This coordinate system will cover the flow in $0 \leq t \leq T$. Writing the flow hypersurfaces as graphs $u(t,\cdot)$ of a function we have
\begin{equation}
0 < c^{-1} \leq u \leq c.
\end{equation}
And since $M(t)$ are convex,
\begin{equation}
v^2 = 1 + \sinh^{-2} u \, \sigma^{ij} u_i u_j
\end{equation}
is uniformly bounded.
Under assumption \ref{general-assmption} $(1)$ we have $\kappa_i \geq 1$. And under assumption \ref{general-assmption} $(2)$ it is proved in
\cite[Lemma 4.4]{Ger15} that
\begin{equation}
\frac{1}{n} \tilde{\kappa}_n \leq \tilde{F} \leq c
\end{equation} in $N$ or equivalently, $\kappa_i \geq c$ in $\mathbb{H}^{n+1}$.
The proof of uniform boundedness of $\kappa_i$ from above is similar to those in \cite[Theorem 6.6]{Ger13}.
Since $F$ is concave, we may first apply the Krylov-Safonov and then the parabolic Schauder estimates to obtain the desired a priori estimates.
\end{proof}

In view of Lemma \ref{spherical-flow}, \ref{compare-spherical-barrier}, \ref{shrink-the-convex-body} and \ref{smooth-estimates-u},
the flow (\ref{flow-eq}) shrinks in finite time to a point $x_0$.

\section{The rescaled flow}
In view of Lemma \ref{compare-spherical-barrier} and \ref{shrink-the-convex-body} we can choose $\delta>0$ small and define
\begin{equation}
t_\delta = T^* - \delta,
\end{equation}
such that
\begin{equation}
\hat{M} (t_\delta) \subset B_{8c \rho_- (t_\delta)}(x_0) \quad \forall x_0 \in \hat{M}(t_\delta),
\end{equation}
and
\begin{equation}
8c \rho_- (t_\delta) \leq 8c \Theta(t_\delta, T^*) < 1.
\end{equation}

Fix now a $t_0 \in (t_\delta, T^*)$ and let $B_{\rho_- (t_0)} (x_0)$ be an inball of $\hat{M}(t_0)$. Choose $x_0$ to be the center of a
geodesic polar coordinate system, then the hypersurfaces $M(t)$ can be written as graphs
\begin{equation}
M(t) = \mathrm{graph \,}u(t, \cdot) \quad \forall t_\delta \leq t \leq t_0,
\end{equation}
such that
\begin{equation}
\rho_- (t_0) \leq u(t_0) \leq u(t) \leq 1.
\end{equation}
\begin{lem}
\label{chi-i-u-i}
Let
\begin{equation}
\chi = \frac{v}{\sinh u} \equiv v \eta(r),
\end{equation}
if $\chi_i =0$, then $u_i=0$.
\end{lem}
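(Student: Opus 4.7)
The plan is to turn the pointwise condition $\chi_i = 0$ into the equation $h_{ij}u^j = 0$ and then invoke strict convexity of $M(t)$. First I would differentiate $\chi = v/\sinh u$ directly to obtain
\begin{equation}
\chi_i = \frac{v_i}{\sinh u} - \frac{v\cosh u}{\sinh^2 u}\, u_i,
\end{equation}
so that $\chi_i = 0$ is equivalent to $v_i = (v\cosh u/\sinh u)\, u_i$. Separately, differentiating the defining identity $v^2 = 1 + \sigma^{jk} u_j u_k/\sinh^2 u$, with $u^j := \sigma^{jk} u_k$ and $u_{;i;j}$ the covariant Hessian with respect to $\sigma$, gives
\begin{equation}
v v_i = \frac{u_{;i;j}\, u^j}{\sinh^2 u} - \frac{(v^2-1)\cosh u}{\sinh u}\, u_i.
\end{equation}
Equating the two expressions for $v_i$ produces the critical-point identity
\begin{equation}
u_{;i;j}\, u^j = (2v^2-1)\,\sinh u\,\cosh u\, u_i.
\end{equation}

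Next I would use the standard formula for the second fundamental form of a graph in geodesic polar coordinates of $\mathbb{H}^{n+1}$,
\begin{equation}
v\, h_{ij} = -u_{;i;j} + \coth u\,(g_{ij} + u_i u_j),
\end{equation}
where $g_{ij} = u_i u_j + \sinh^2 u\,\sigma_{ij}$ is the induced metric. Contracting with $u^j$ and using $u^j u_j = (v^2-1)\sinh^2 u$ together with $u^j g_{ij} = v^2 \sinh^2 u\, u_i$ yields
\begin{equation}
v\, h_{ij} u^j = -u_{;i;j} u^j + (2v^2-1)\,\sinh u\,\cosh u\, u_i,
\end{equation}
which vanishes identically by the critical-point identity of the first paragraph. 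Hence $h_{ij} u^j = 0$ at the point in question.

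Finally, by Theorem~\ref{preservation-h-convexity} under Assumption~\ref{general-assmption}(1), or by the pinching estimate from \cite{Ger15} under Assumption~\ref{general-assmption}(2), the flow hypersurfaces are strictly convex, so $h_{ij}$ is positive definite. Consequently $h_{ij} u^j = 0$ forces $u^j = 0$, whence $u_i = 0$. The only genuine difficulty is bookkeeping the coefficients in the graph formula for $v h_{ij}$—specifically that both $g_{ij}$ and $u_i u_j$ enter with coefficient $\coth u$—so that the two identities cancel exactly; once this is verified, the positive-definiteness of $h_{ij}$ closes the argument immediately.
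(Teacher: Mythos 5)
Your proof is correct, and it is essentially the argument the paper invokes by citing \cite[Lemma 7.1]{Ger13}: the special choice $\eta = 1/\sinh$ makes the lower-order terms cancel so that a critical point of $\chi$ forces $h_{ij}u^j = 0$, and strict convexity of the flow hypersurfaces then gives $u_i = 0$. The only difference is that you carry out the cancellation explicitly (via the two expressions for $v_i$ and the graph formula $v\,h_{ij} = -u_{:ij} + \coth u\,(g_{ij}+u_iu_j)$, which is consistent with \cite[(3.26)]{Ger11} as quoted in the paper) instead of referring to Gerhardt's computation.
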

\begin{proof}
Note that
\begin{equation}
\eta(r) = \frac{1}{\sinh r}
\end{equation}
solves the equation
\begin{equation}
\dot{\eta} = - \frac{\bar{H}}{n} \eta,
\end{equation}
hence the proof is same as those in \cite[Lemma 7.1]{Ger13}.
\end{proof}
Similar to \cite[Lemma 7.2, Corollary 7.3]{Ger13} we obtain
\begin{lem}
\label{rescaled-curvature-bounded-from-above}
There exists a uniform constant $c > 0$ such that
\begin{equation}
\Theta(t,T^*) F \leq c \quad \forall t \in [t_\delta,T^*),
\end{equation}
and that the rescaled principal curvatures $\tilde\kappa_i = \kappa_i \Theta$ satisfy
\begin{equation}
\label{upper-bound-rescaled-principal-curvature}
\tilde\kappa_i \leq c \quad \forall t \in [t_\delta,T^*).
\end{equation}
\qed
\end{lem}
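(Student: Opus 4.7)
The plan is to bound the scaling-invariant auxiliary function
\begin{equation*}
w := F\chi^{-1} = \frac{F\sinh u}{v}
\end{equation*}
from above on $[t_\delta,T^*)$, and then to convert this into the asserted bound on $\Theta F$. The choice of $w$ is dictated by the model solution: on a geodesic sphere of radius $u$ one has $F=\coth u$, $v=1$ and $\chi=1/\sinh u$, so $w=\cosh u$, which stays bounded as the sphere shrinks. Hence $w$ is the natural candidate for a universal upper bound, and the spherical solution already saturates it.

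First I compute $\dot w-F^{ij}w_{;ij}$. This uses the scalar flow equation $\dot u=-Fv$, the standard Huisken-type evolution equations for $F$ and $v$ under $\dot x=-F\nu$, and the constant-sectional-curvature identity $\bar R_{\alpha\beta\gamma\delta}=-(\bar g_{\alpha\gamma}\bar g_{\beta\delta}-\bar g_{\alpha\delta}\bar g_{\beta\gamma})$ of $\mathbb H^{n+1}$. The outcome is an equation with an algebraic part, involving $F$, $F^{kl}h_{rk}h^r_l$ and $v$, together with a gradient part collecting commutators of $F_i$, $u_i$ and $v_i$.

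At an interior space-time maximum of $w$ the vanishing of $\nabla w$ expresses $F_i/F$ as a combination of $u_i$ and $v_i$. Using the ODE $\dot\eta=-\bar H\,\eta/n$ satisfied by $\eta(r)=1/\sinh r$ (the mechanism isolated in \rl{chi-i-u-i}) this forces $\chi_i=0$, so that \rl{chi-i-u-i} itself yields $u_i=0$ and therefore $v=1$ at the maximum point. With $v=1$ and $u_i=0$, the $1$-homogeneity of $F$ and the pinching estimates reduce $F^{ij}w_{;ij}\geq\dot w$ to an algebraic inequality of the form $c_1 w^2\leq c_2 w$, whence $w\leq c$ uniformly on $[t_\delta,T^*)$. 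From $w\leq c$ and the uniform bound on $v$ (which on any compact sub-interval is \rl{smooth-estimates-u} and extends to $T^*$ by convexity of the flow hypersurfaces) it follows that $F\sinh u\leq c'$. The inradius/circumradius comparisons in \rl{compare-spherical-barrier} and \rl{shrink-the-convex-body} give $\sinh u\geq c_0\Theta$ on $[t_\delta,T^*)$, whence $\Theta F\leq c$. For the rescaled principal curvatures, the pinching $\kappa_1\geq\epsilon\kappa_n$ (from \rl{pinching-estimates-thm} under \ras{general-assmption}(1), and from the theorem that follows it under \ras{general-assmption}(2)) together with $1$-homogeneity of $F$ gives $F\geq c_3\kappa_n$, so
\begin{equation*}
\Theta\kappa_i\leq\Theta\kappa_n\leq c_3^{-1}\Theta F\leq c.
\end{equation*}

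The main technical obstacle is the maximum-principle step: without the factor $\chi^{-1}$, the gradient terms in the evolution of $F\sinh u$ alone would not combine with those of $v$ into a sign-definite expression, and the parabolic inequality would not close. It is precisely the ODE identity behind \rl{chi-i-u-i} that aligns the gradient contributions so that $\nabla w=0$ at an interior maximum forces $u_i=0$, after which the remaining analysis is a termwise check using $1$-homogeneity and the pinching estimates.
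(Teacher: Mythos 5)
Your central maximum-principle step has a genuine gap, in two places. First, the deduction ``at an interior maximum of $w=F\chi^{-1}$ the vanishing of $\nabla w$ forces $\chi_i=0$'' is a non sequitur: $\nabla w=0$ only gives $F_i\chi^{-1}=F\chi^{-2}\chi_i$, i.e.\ $F_i=w\,\chi_i$, which relates $F_i$ to $\chi_i$ but in no way forces $\chi_i=0$. \rl{chi-i-u-i} is applicable at a critical point of $\chi$ itself (or of a monotone function of $\chi$ alone) --- this is exactly how it is used later, at extrema of $u^{*}=-\mathrm{arcsinh}(v^{-1}\sinh u)$ in Section \ref{section:inverse-curvature-flows} --- but not at a critical point of the product $F\chi^{-1}$. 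Second, even if one had $v=1$, $u_i=0$ at the maximum, the resulting zeroth-order inequality does not close. Writing $\lambda=\chi^{-1}=-\ska{\nu,p_0}=v^{-1}\sinh u$ and $\phi=-\ska{x,p_0}=\cosh u$, the flow $\dot x=-F\nu$ in $\mathbb H^{n+1}$ gives
\begin{equation*}
\dot F-F^{ij}F_{;ij}=F\{F^{ij}h_{ik}h^k_j-F^{ij}g_{ij}\},\qquad
\dot\lambda-F^{ij}\lambda_{;ij}=F^{ij}h_{ik}h^k_j\,\lambda-2F\phi,
\end{equation*}
so that for $w=F\lambda$, at a spatial maximum (where moreover $-2F^{ij}F_i\lambda_j=2F\lambda^{-1}F^{ij}\lambda_i\lambda_j\geq0$),
\begin{equation*}
\dot w-F^{ij}w_{;ij}\;\geq\;2F^{ij}h_{ik}h^k_j\,w-F^{ij}g_{ij}\,w-2F^2\phi .
\end{equation*}
Under the pinching estimates one only knows $F^{ij}h_{ik}h^k_j\asymp F^2$ up to multiplicative constants, while $\phi\leq\cosh 1$; on the shrinking-sphere model the two leading terms $2F^{ij}h_{ik}h^k_j\,w$ and $2F^2\phi$ cancel \emph{exactly}, so their difference is borderline and its sign is not controlled by pinching --- for large $w$ the right-hand side is positive of size $F^2w$. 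Hence the asserted reduction to ``$c_1w^2\leq c_2w$'' is not available, and the function $F\chi^{-1}$ cannot be handled by a direct maximum principle.

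The bound the lemma asserts really does require an additional idea, which is what the cited argument of \cite[Lemma 7.2, Corollary 7.3]{Ger13} supplies: one works on time intervals $[t_1,t_2]$ with $\Theta(t_2,T^*)=\tfrac12\Theta(t_1,T^*)$, centers the coordinates at the inball center of $\hat M(t_2)$ (so that $\lambda$ has a lower bound of order $\Theta(t_2)$ by Lemmas \ref{compare-spherical-barrier} and \ref{shrink-the-convex-body}), and applies the maximum principle to a Tso-type quotient of the form $F/(\lambda-a)$ with $a\sim\Theta(t_2)$; there the shift $-a$ produces the extra negative term $-a\,F\,F^{ij}h_{ik}h^k_j/(\lambda-a)^2$ that dominates $2F^2\phi/(\lambda-a)^2$ once $F\gtrsim\Theta^{-1}$, and the cubic structure of the resulting ODE makes the bound at $t_2$ independent of the data at $t_1$. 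Your conversion steps (using $\sinh u\geq c\,\Theta$, the convexity-based bound on $v$, and pinching plus $1$-homogeneity to pass from $\Theta F\leq c$ to $\tilde\kappa_i\leq c$) are fine and coincide with Corollary 7.3 of \cite{Ger13}, but the core estimate $\Theta F\leq c$ is not established by your argument.
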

\begin{lem}
\label{estimates-varphi}
Let $t_1 \in [t_\delta, T^*)$ be arbitrary and let $t_2> t_1$ be such that
\begin{equation}
\Theta (t_2,T^*) = \tfrac{1}{2} \Theta(t_1, T^*).
\end{equation}
Let $x_0 \in \hat{M}(t_2)$ be the center of an geodesic inball and introduce polar coordinates around $x_0$ and write the hypersurface $M(t)$ as graphs
\begin{equation}
M(t) = \mathrm{graph}\, u(t, \cdot).
\end{equation}
Define $\vartheta$ by
\begin{equation}
\vartheta(r) = \sinh r,
\end{equation}
and
\begin{equation}
\varphi = \int_{r_2}^u \vartheta^{-1},
\end{equation}
where $r_2  = \Theta(t_2, T^*)$. Then $\varphi(t, \cdot)$ is uniformly bounded in $C^2( \mathbb{S}^n)$ for any $t_1 \leq t \leq t_2$ independent
of $t_1,t_2$. Furthermore, let $\varGamma_{ij}^k$ and $\tilde\varGamma_{ij}^k$ be the Christoffel symbols of the metrics $g_{ij}$ and $\sigma_{ij}$
respectively, then the tensor $\varGamma_{ij}^k - \tilde\varGamma_{ij}^k$ is also uniformly bounded independent of $t_1, t_2$.
\end{lem}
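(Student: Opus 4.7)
The plan is to convert the already-established bounds on $u$, $v$, and the principal curvatures (Lemmas \ref{compare-spherical-barrier}, \ref{shrink-the-convex-body}, \ref{smooth-estimates-u}, and \ref{rescaled-curvature-bounded-from-above}) into estimates on $\varphi$ via the defining identity $\varphi_i = u_i/\sinh u$, and then to read off the Christoffel bound from the explicit form of the induced metric $g_{ij}$. For the $C^0$ part, the choice $\Theta(t_2,T^*) = \tfrac{1}{2}\Theta(t_1,T^*)$ forces $\Theta(t,T^*) \in [r_2, 2r_2]$ on $[t_1,t_2]$. Since the flow is contracting, $u(t,\xi) \geq u(t_2,\xi) \geq \rho_-(t_2) \geq \tfrac{c}{2} r_2$ by the inradius/$\Theta$ comparison established inside the proof of Lemma \ref{smooth-estimates-u}, while the inclusion $\hat M(t) \subset \hat M(t_1)$ combined with the pinching estimate used in Lemma \ref{shrink-the-convex-body} gives $u(t,\xi) \leq 2\rho_+(t_1) \leq c_2 r_2$. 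Since $r_2$ is small, $\tanh(s/2) \sim s/2$ on the relevant interval, so
\[
\varphi = \log\tanh(u/2) - \log\tanh(r_2/2)
\]
is uniformly bounded in $C^0$.

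For the $C^1$ estimate I use the identity $\sigma^{ij}\varphi_i\varphi_j = \sinh^{-2} u\cdot \sigma^{ij}u_i u_j = v^2 - 1$, which is uniformly bounded by Lemma \ref{smooth-estimates-u}. For the $C^2$ estimate I rely on the standard formula for the second fundamental form of a graph $r = u(\xi)$ in the warped-product metric $dr^2 + \sinh^2 r\,\sigma_{ij}d\xi^i d\xi^j$, which expresses $u_{;ij}^{(\sigma)}$ as a linear combination of $v\, h_{ij}$, $u_i u_j$, and $\sinh u\cosh u\,\sigma_{ij}$ with coefficients depending only on $\sinh u$, $\cosh u$, and $v$. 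Substituting into $\varphi_{;ij}^{(\sigma)} = u_{;ij}^{(\sigma)}/\sinh u - (\cosh u/\sinh^2 u)\, u_i u_j$ produces $\varphi_{;ij}^{(\sigma)}$ as a combination of $h_{ij}/\sinh u$, $\sigma_{ij}$, and $\varphi_i \varphi_j$ with bounded coefficients. Because $\sinh u$ and $\Theta(t,T^*)$ are both comparable to $r_2$, Lemma \ref{rescaled-curvature-bounded-from-above} translates into $h_{ij}/\sinh u \leq C(\sigma_{ij} + \varphi_i \varphi_j)$ as quadratic forms, and together with the $C^1$ bound this delivers $|\varphi_{;ij}^{(\sigma)}|_\sigma \leq C$.

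For the second assertion, the induced metric takes the form $g_{ij} = \sinh^2 u\,(\sigma_{ij} + \varphi_i \varphi_j)$, and a direct computation expresses $\varGamma_{ij}^k - \tilde\varGamma_{ij}^k$ in terms of first derivatives of $\log\sinh u$ (namely $\cosh u \cdot \varphi_i$, bounded by Step 2 together with the uniform control of $\cosh u$) and first derivatives of $\sigma_{ij} + \varphi_i \varphi_j$ (which involve $\varphi_{;ij}^{(\sigma)}$, bounded by Step 3). The hard part is the $C^2$ step: the conformal/rescaling bookkeeping must be performed carefully enough that the bound $\kappa_i \Theta \leq c$ from Lemma \ref{rescaled-curvature-bounded-from-above} converts cleanly into a bound on $\varphi_{;ij}^{(\sigma)}$ tested against $\sigma_{ij} + \varphi_i \varphi_j$; everything else is essentially algebraic bookkeeping.
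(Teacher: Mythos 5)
Your $C^0$, $C^2$ and Christoffel steps follow essentially the same route as the paper (the $C^2$ bound via the graph representation of $h_{ij}$ together with $\kappa_i\Theta\leq c$ from Lemma \ref{rescaled-curvature-bounded-from-above}, and the Christoffel difference read off from $g_{ij}=\sinh^2u\,(\sigma_{ij}+\varphi_i\varphi_j)$), but there is a genuine gap at the $C^1$ step. You justify the bound on $v^2-1=\sigma^{ij}\varphi_i\varphi_j$ by citing Lemma \ref{smooth-estimates-u}. That lemma only provides a priori estimates on compact subintervals $[0,T]\subset[0,T^*)$, with constants that depend on $T$ (its proof uses $c^{-1}\leq u\leq c$ in a coordinate system centered at the inball center of $\hat{M}(T)$ for the \emph{fixed} endpoint $T$), so it gives nothing uniform as $t_2\to T^*$; moreover $v$ is not a geometric invariant but depends on the choice of polar center, which in Lemma \ref{estimates-varphi} is the inball center of $\hat{M}(t_2)$ and hence changes with $t_2$. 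Since the entire point of the lemma is uniformity independent of $t_1,t_2$, this citation does not close the step, and the gap propagates: your $C^2$ argument (equivalence of $\sigma^{ij}-v^{-2}\varphi^i\varphi^j$ with $\sigma^{ij}$, boundedness of the coefficients involving $v$) and the Christoffel bound both presuppose the uniform $v$-estimate.

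The correct, scale-invariant argument --- and the one the paper uses --- is the convexity gradient estimate of \cite[(2.7.83)]{Ger06}: for a convex graph one has $v\leq e^{\bar{\kappa}(u_{\max}-u_{\min})}$, where $\bar{\kappa}\leq \cosh u_{\max}/\sinh u_{\min}\leq c/u_{\min}$ bounds the principal curvatures of the coordinate slices meeting $M(t)$; combined with your own $C^0$ estimate $c^{-1}\Theta(t_2,T^*)\leq u\leq c\,\Theta(t_2,T^*)$ this gives $\bar{\kappa}(u_{\max}-u_{\min})\leq c(u_{\max}/u_{\min}-1)\leq c'$, hence a uniform bound on $v$ independent of $t_1,t_2$. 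With that substitution the remainder of your outline goes through and coincides with the paper's proof.
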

\begin{proof}
As in \cite[Lemma 7.4]{Ger13}, we conclude from Lemma \ref{compare-spherical-barrier} and Lemma \ref{shrink-the-convex-body}
that there exists a uniform constant $c>1$, independent of $t_1, t_2$, such that
\begin{equation}
\label{C0-estimates-u}
c^{-1} \Theta(t_2, T^*) \leq u(t,\xi) \leq c \Theta(t_2, T^*) \quad \forall t \in [t_1,t_2].
\end{equation}
Note that
\begin{equation}
\varphi= \{ \log \sinh (\tfrac{r}{2}) - \log \cosh (\tfrac{r}{2})\} \big|_{r_2}^u,
\end{equation}
thus we derive the $C^0$-estimates
\begin{equation}
\abs{\varphi} \leq \log c.
\end{equation}
As in the proof of \cite[Lemma 7.5]{Ger13}, an upper bound for the principal curvatures
of the slices $\{ x^0 = \mathrm{const}\}$ intersecting $M(t)$ satisfies
\begin{equation}
\bar{\kappa} \leq \frac{\sup \cosh u(0, \cdot)}{\sinh u_{\min}} \leq \frac{c}{u_{\min}},
\end{equation}
and from \cite[(2.7.83)]{Ger06} we infer that the uniformly boundedness of $v$.
\begin{equation}
v \leq e^{\bar{\kappa} (u_{\max} - u_{\min})} \leq e^{ c ( \frac{u_{\max}}{u_{\min}} ) - 1},
\end{equation}
concluding further that
\begin{equation}
\abs{D \varphi}^2 = v^2 -1 \leq c.
\end{equation}
Define
\begin{equation}
\tilde{g}^{ij} = \sigma^{ij} - v^{-2} \varphi^i \varphi^j,
\end{equation}
where
\begin{equation}
\varphi^i = \sigma^{ik} \varphi_k.
\end{equation}
Due to the boundedness of $v$ the metrics $\tilde{g}_{ij}$ and $\sigma_{ij}$ are equivalent, thus we can raise the indices of
$\varphi_{ij}$ by $\tilde{g}_{ij}$ and by employing
the relation \cite[(3.26)]{Ger11}
\begin{equation}
h^i_j = v^{-1} \vartheta^{-1} \{ - (\sigma^{ik} - v^{-2} \varphi^i \varphi^k) \varphi_{jk} + \dot{\vartheta} \delta^i_j \},
\end{equation}
we infer
\begin{equation}
\tilde{g}^{ik} \varphi_{jk} = -v \vartheta h^i_j + \dot{\vartheta} \delta^i_j,
\end{equation}
concluding further from (\ref{upper-bound-rescaled-principal-curvature})
\begin{equation}
\norm{ \varphi_{ij} }^2 \leq c (v^2 \vartheta^2 \abs{A}^2 + n \dot{\vartheta}^2)
\end{equation}
is bounded from above for all $t \in [t_1, t_2]$.
We choose coordinates such that $\tilde{\varGamma}_{ij}^k$ in a fixed point vanishes.
Denote the covariant derivative with respect to $\sigma_{ij}$ by a colon. In such coordinates
\begin{equation}
\varGamma_{ij}^k = \tfrac{1}{2} g^{km} (g_{mi:j} + g_{mj:i} - g_{ij:m}).
\end{equation}
From
\begin{equation}
\label{tilde-g_ij}
g^{ij} = \vartheta^{-2} \tilde{g}^{ij}
\end{equation}
we compute
\begin{equation}
g^{km} g_{mi:j} =\tilde{g}^{km} \{ \varphi_{mj} \varphi_i + \varphi_{ij} \varphi_m +
2 \cosh u \, \varphi_j (\varphi_m \varphi_i + \sigma_{mi}) \}.
\end{equation}
Using the estimates for $\varphi$ proved before, we conclude that
$\varGamma_{ij}^k - \tilde{\varGamma}_{ij}^k$ are uniformly bounded independent of $t_1$ and $t_2$.
\end{proof}

Define a new time parameter as
\begin{equation}
\tau = - \log \Theta,
\end{equation}
then
\begin{equation}
\frac{dt}{d \tau} = \Theta \frac{\sinh \Theta}{\cosh \Theta}.
\end{equation}
In the following we denote the differentiation with respect to $t$ by a dot and differentiation with respect to $\tau $ by a prime.
\begin{lem}
\label{estimates-tilde-F}
The rescaled quantity $\tilde{F} = F \Theta$ satisfies the inequality
\begin{equation}
\sup_{M(t_1)} \tilde{F} \leq c \inf_{M(t_2)} \tilde{F}
\end{equation}
with a uniform constant $c > 0$.
\end{lem}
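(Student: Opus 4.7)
The plan is to establish a parabolic Harnack inequality for $\tilde{F}$ on the rescaled interval $[t_1,t_2]$, which in the time parameter $\tau=-\log\Theta$ has length exactly $\log 2$, using the Krylov--Safonov theory applied in the rescaled spatial coordinates provided by Lemma \ref{estimates-varphi}.

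First, I would derive the evolution equation for $F$ under the flow $(\ref{flow-eq})$. Since $\mathbb{H}^{n+1}$ has constant sectional curvature $-1$, the standard computation yields
\begin{equation*}
\dot F - F^{ij} F_{;ij} = F\, F^{ij} h_i^k h_{kj} - F\bigl(F^{ij}g_{ij}\bigr) + \text{(lower order terms in } F, F^{ij}),
\end{equation*}
and a corresponding equation for $\tilde F = F\Theta$, using $\Theta'=-\Theta$ and $dt/d\tau=\Theta\tanh\Theta$. In the $\tau$-time this takes the schematic form
\begin{equation*}
\tilde F' - \Theta\tanh\Theta \cdot F^{ij} \tilde F_{;ij} = b^i \tilde F_{;i} + c\, \tilde F,
\end{equation*}
with a coefficient function $c$ controlled by $\Theta F^{ij}h_i^kh_{kj}$, $\Theta F^{ij} g_{ij}$ and lower order terms. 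Both $\Theta F^{ij} g_{ij}$ and $\Theta F^{ij} h_i^kh_{kj}$ are uniformly bounded thanks to Lemma \ref{rescaled-curvature-bounded-from-above} and the 1-homogeneity of $F$.

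Next, I would express this equation in the rescaled coordinates of Lemma \ref{estimates-varphi}. Writing $M(t)$ as $\mathrm{graph}\,u(t,\cdot)$ in the polar coordinates centered at $x_0\in\hat M(t_2)$ and passing to the rescaled graph function $\varphi$, the induced metric $g_{ij}$ is comparable to $\Theta^2$ times a bounded metric (equivalent to $\sigma_{ij}$), while the tensor $\Theta h_i^j$ is uniformly bounded above (Lemma \ref{rescaled-curvature-bounded-from-above}) and uniformly pinched below by the pinching estimates (Lemma \ref{pinching-estimates-thm}, resp.\ the $\tilde F$-pinching for assumption \ref{general-assmption}$(2)$). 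Consequently the rescaled operator $\Theta\tanh\Theta\cdot F^{ij}\partial_i\partial_j$ acts as a \emph{uniformly elliptic} operator in the fixed background metric $\sigma_{ij}$ on $\mathbb{S}^n$, with ellipticity constants independent of $t_1,t_2$; the bound on $\Gamma_{ij}^k-\tilde\Gamma_{ij}^k$ from Lemma \ref{estimates-varphi} converts covariant second derivatives into ordinary second derivatives plus bounded drift, and the coefficient $c$ is bounded by the same estimates.

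Finally, I would apply the Krylov--Safonov parabolic Harnack inequality to $\log \tilde F$ on the $\tau$-interval $[\tau_1,\tau_2]$ of fixed length $\log 2$. Because $\tilde F$ is bounded away from $0$ from below along the spherical comparison solution (as in the proof of Lemma \ref{rescaled-curvature-bounded-from-above}) and uniformly bounded from above, the Harnack inequality produces a uniform constant $c$ with $\sup_{M(t_1)} \tilde F \leq c \inf_{M(t_2)} \tilde F$.

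The main obstacle is the verification of uniform parabolicity in the rescaled coordinates: this requires simultaneously (i) the two-sided pinching $\kappa_1\geq\epsilon\kappa_n$, (ii) the rescaled upper bound $\kappa_i\Theta\le c$, and (iii) the $C^2$-control of $\varphi$ together with the bound on $\Gamma_{ij}^k-\tilde\Gamma_{ij}^k$, all of which are at hand from Lemma \ref{pinching-estimates-thm}, Lemma \ref{rescaled-curvature-bounded-from-above}, and Lemma \ref{estimates-varphi}. Once uniform parabolicity is established with constants independent of $t_1,t_2$, the rest is a direct application of the classical Harnack estimate, in complete analogy with \cite[Lemma 7.6]{Ger13}.
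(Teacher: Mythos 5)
Your proposal follows essentially the same route as the paper's proof: rewrite the evolution of $\tilde F=F\Theta$ in the time $\tau=-\log\Theta$, use the pinching estimates, the rescaled curvature bound and Lemma \ref{estimates-varphi} (equivalence of $\Theta^2F^{ij}$ with $\sigma^{ij}$, bounded $\varGamma_{ij}^k-\tilde\varGamma_{ij}^k$) to see that $\tilde F$ solves a uniformly parabolic equation with bounded coefficients on the cylinder $[\tau_1,\tau_2]\times\mathbb{S}^n$ of fixed length $\log 2$, and conclude with the parabolic Harnack inequality. Two small corrections: the zeroth-order coefficient is $\Theta\frac{\sinh\Theta}{\cosh\Theta}\,F^{ij}h_{ik}h^k_j\approx\Theta^2\sum_iF_i\kappa_i^2$, which is bounded by Lemma \ref{rescaled-curvature-bounded-from-above}, whereas the quantity $\Theta F^{ij}h_i^kh_{kj}$ you cite is of order $\Theta^{-1}$ and is not bounded; and the Harnack inequality is applied directly to the nonnegative solution $\tilde F$, so neither the passage to $\log\tilde F$ nor an a priori positive lower bound on $\tilde F$ is needed.
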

\begin{proof}
$\tilde{F}$ satisfies the equation
\begin{equation}
\tilde{F}' = \dot{F} \Theta^2 \frac{\sinh \Theta}{\cosh \Theta} - \tilde{F},
\end{equation}
and from the evolution equation of $F$ in \cite[(2.8)]{Ger13} we conclude further
\begin{equation}
\begin{split}
\label{eq-F-tilde}
\tilde{F}'  + \tilde{F} - \{ F^{ij} F_{;ij}
+ F^{ij} h_{ik} h^k_j  F + K_N F^{ij} g_{ij} F \}\Theta^2 \frac{\sinh \Theta}{\cosh \Theta}=0.
\end{split}
\end{equation}
We consider the non-trivial term in (\ref{eq-F-tilde})
\begin{equation}
- F^{ij} F_{;ij} \Theta^2 \frac{\sinh \Theta}{\cosh \Theta}.
\end{equation}
In view of (\ref{tilde-g_ij}), the pinching estimate and the boundedness of  $v$,
$\Theta^2 F^{ij}$ and $\sigma^{ij}$ are equivalent and hence uniformly positive definite.
Furthermore,
\begin{equation}
F_{;ij} = F_{:ij} - \{ \varGamma_{ij}^k - \tilde{\varGamma}_{ij}^k \} F_k.
\end{equation}
Hence we conclude from Lemma \ref{estimates-varphi} that $\tilde{F}$ satisfies a uniform parabolic equation of the form
\begin{equation}
\tilde{F}' - a^{ij} \tilde{F}_{:ij} + b^i \tilde{F}_i + c \tilde{F} =0
\end{equation}
in the cylinder $[\tau_1, \tau_2] \times \mathbb{S}^n$, where $\tau_i = - \log \Theta(t_i, T^*)$, with uniformly bounded coefficients.
The statement follows then from the parabolic Harnack inequality.
\end{proof}
\begin{cor}
\label{rescaled-curvature-bounded-from-below}
The rescaled principal curvatures $\tilde{\kappa}_i = \kappa \Theta$ are uniformly bounded from below.
\end{cor}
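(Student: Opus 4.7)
The plan is to exploit the Harnack inequality of \rl{estimates-tilde-F} after reducing the desired lower bound on $\tilde\kappa_i$ to a lower bound on $\tilde F$. Since $F$ is monotone, $1$\nbd homogeneous and normalised by $F(1,\dots,1)=1$, one has $\kappa_1\leq F\leq \kappa_n$ pointwise on $M(t)$, and hence $\tilde\kappa_1 \leq \tilde F \leq \tilde\kappa_n$. Combined with the pinching estimate $\kappa_1 \geq \epsilon \kappa_n$ of \rl{pinching-estimates-thm}, it therefore suffices to prove that $\tilde F$ is uniformly bounded below by a positive constant on $[t_\delta, T^*)$.

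The key step is to produce a uniform lower bound for the \emph{supremum} of $\tilde F$ on each slice. For this I would use an inside-touching comparison at a point $p_t \in M(t)$ where $M(t)$ touches its circumscribed geodesic ball $B_{\rho_+(t)}(y_t)\supset \hat M(t)$. Convexity forces $\kappa_i(p_t) \geq \coth\rho_+$ for every $i$, and the geometric bounds $\rho_+ \leq 4c\,\rho_- \leq 4c\,\Theta$ from \rl{shrink-the-convex-body} and \rl{compare-spherical-barrier}, combined with $\coth x \geq x^{-1}$, yield
\begin{equation*}
\tilde\kappa_i(p_t) = \kappa_i(p_t)\,\Theta \geq \Theta\coth\rho_+ \geq \Theta/\rho_+ \geq (4c)^{-1}.
\end{equation*}
Consequently $\sup_{M(t)} \tilde F \geq \tilde\kappa_1(p_t) \geq (4c)^{-1}$ for every $t\in[t_\delta,T^*)$.

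Once this seed is available the conclusion is immediate. For any $t$ with $\Theta(t,T^*)\leq\tfrac12\Theta(t_\delta,T^*)$ there is a unique $t'\in[t_\delta,t)$ satisfying $\Theta(t',T^*) = 2\Theta(t,T^*)$, and \rl{estimates-tilde-F} gives $\inf_{M(t)}\tilde F \geq c^{-1}\sup_{M(t')}\tilde F$, which is uniformly bounded below by the preceding step. On the remaining compact initial interval $[t_\delta,t^*]$, \rl{smooth-estimates-u} provides uniformly smooth strictly convex flow hypersurfaces with $\Theta$ bounded away from zero, so $\tilde F$ is bounded below there by direct compactness. This completes the reduction.

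The main obstacle is the first step: the Harnack inequality of \rl{estimates-tilde-F} propagates information only forward in time, so without the circumscribed-ball observation the scheme would be circular. Everything else is a direct combination of already established ingredients (pinching, monotonicity, and the Harnack estimate).
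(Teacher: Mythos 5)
Your proposal is correct and follows essentially the same strategy as the paper: a touching argument with an enclosing geodesic sphere of radius comparable to $\Theta$ (the paper touches the slice $\{x^0=u_{\max}\}$ and uses \re{C0-estimates-u}, you touch the circumscribed ball and use \rl{shrink-the-convex-body} together with \rl{compare-spherical-barrier}) to seed $\sup_{M(t)}\tilde F\geq c>0$, followed by the Harnack estimate of \rl{estimates-tilde-F} and the pinching estimates. The only addition is your explicit treatment of the initial compact interval, which the paper leaves implicit.
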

\begin{proof}
Consider a point $(t, \xi)$ in $M(t)$ such that
\begin{equation}
u(t,\xi) = \sup_ {M(t)} u.
\end{equation}
In view of \cite[(1.5.10)]{Ger06}, it holds in $(t, \xi)$
\begin{equation}
h_{ij} \geq \bar{h}_{ij}, \quad g_{ij} = \bar{g}_{ij}, \quad \kappa_i \geq \bar{\kappa}_i = \frac{\cosh u}{\sinh u},
\end{equation}
where we denote the quantity of the slices $\{ x^0 = \mathrm{const} \, \}$ with a bar.
In view of (\ref{C0-estimates-u})
\begin{equation}
\sup_{M(t)} \tilde{F} \geq F(\tilde{\kappa}_i (t,\xi)) \geq F \left( \frac{\cosh u(t,\xi)}{\sinh u(t, \xi)} \Theta(t, T^*) \right) \geq c >0.
\end{equation}
The statement follows from the pinching estimates and Lemma \ref{estimates-tilde-F}.
\end{proof}
Let $x_0 \in \mathbb{H}^{n+1}$ be the point the flow hypersurfaces are shrinking to and introduce geodesic polar coordinates around it. Write
$M(t) = \textrm{graph } u(t, \cdot)$
and let
\begin{equation}
\tilde{u} (\tau, \xi) = u(t, \xi) \Theta(t, T^*)^{-1},
\end{equation}
\begin{equation}
\tau_\delta = - \log \Theta(t_\delta, T^*), \quad Q(\tau_\delta, \infty) = [\tau_\delta, \infty) \times \mathbb{S}^n.
\end{equation}
Using the same argument as in \cite[Lemma 7.9, Lemma 7.10]{Ger13} we conclude that
\begin{lem}
The quantities $v$ and $|D \tilde{u}|$ are uniformly bounded form above and
$\tilde{u}$ is uniformly bounded from below and above in $Q( \tau_\delta, \infty) $. \qed
\end{lem}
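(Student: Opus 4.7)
The plan is to establish the four bounds in sequence: an upper bound for $\tilde{u}$, a positive lower bound for $\tilde{u}$, and, from these, upper bounds for $v$ and $|D\tilde{u}|$. I follow the strategy of \cite[Lemma 7.9, Lemma 7.10]{Ger13}, adapted to the hyperbolic ambient space.

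First I would derive the upper bound on $\tilde{u}$. By Lemma \ref{shrink-the-convex-body} the hyperbolic circumradius satisfies $\rho_+(t) \leq 4c\rho_-(t)$, while Lemma \ref{compare-spherical-barrier} applied with origin at an inball center yields $\rho_-(t) \leq \Theta(t, T^*)$. Since $x_0 \in \hat{M}(t)$ for every $t$, the graph function $u(t,\cdot)$ in geodesic polar coordinates around $x_0$ is bounded pointwise by the hyperbolic diameter of $\hat{M}(t)$, which is at most $2\rho_+(t) \leq C\,\Theta(t, T^*)$. Dividing by $\Theta$ yields $\tilde{u} \leq C$.

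For the lower bound on $\tilde{u}$, the claim is that $\inf_{M(t)} u \geq c\,\Theta(t, T^*)$ for a positive constant $c$ independent of $t$. The starting point is that the hyperbolic inradius satisfies $\rho_-(t) \geq c\,\Theta(t, T^*)$, again from Lemma \ref{compare-spherical-barrier} applied with origin at the inball center. Passing to the Klein (Beltrami) parametrization, $\hat{M}(t)$ is also Euclidean convex and the Euclidean inradius and circumradius are pinched by a uniform constant, as already used in the proof of Lemma \ref{shrink-the-convex-body}. Combining Euclidean convexity with the fact that $x_0$ is the interior point to which the nested convex bodies $\hat{M}(t)$ shrink, one shows that $x_0$ lies uniformly deep inside $\hat{M}(t)$ at the scale $\Theta$, i.e.\ the hyperbolic distance from $x_0$ to $M(t)$ in every direction $\xi$ is bounded below by a uniform multiple of $\Theta(t, T^*)$. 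Rescaling gives the desired positive lower bound for $\tilde{u}$.

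Given the two-sided $C^0$ bound on $\tilde{u}$, the gradient estimates follow from convexity. Reasoning as in Lemma \ref{estimates-varphi}, the estimate from \cite[(2.7.83)]{Ger06} gives $v \leq e^{\bar{\kappa}(u_{\max} - u_{\min})}$ with $\bar{\kappa} \leq c/u_{\min}$, so the ratio bound $u_{\max}/u_{\min} \leq C$ yields $v \leq C$. Finally, from $|D\tilde{u}|^2 = \Theta^{-2}\sinh^2 u \cdot (v^2 - 1)$ together with $\sinh u \leq C\,\Theta$ (valid since $u \leq C\,\Theta$ and $\Theta$ is small for $t$ near $T^*$), one concludes $|D\tilde{u}| \leq C$.

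The main obstacle is the uniform lower bound on $\tilde{u}$, that is, showing that the shrinking point $x_0$ does not approach the flow boundary $M(t)$ faster than the characteristic scale $\Theta$. This requires balancing the inradius--circumradius pinching with the geometric position of $x_0$ inside the nested convex bodies, for which the Klein model together with standard Euclidean convex geometry provides the right framework.
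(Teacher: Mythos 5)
Your overall architecture ($C^0$ bounds for $\tilde u$ first, then $v$ and $|D\tilde u|$ via the oscillation estimate $v\le e^{\bar\kappa(u_{\max}-u_{\min})}$ with $\bar\kappa\le c/u_{\min}$) matches what the paper does by quoting \cite[Lemma 7.9, Lemma 7.10]{Ger13}, and your upper bound $\tilde u\le C$ (limit point inside $\hat M(t)$, diameter $\le 2\rho_+\le 8c\rho_-\le C\Theta$) is correct. The problem is the step you yourself identify as the main obstacle: the lower bound. You assert that ``Euclidean convexity together with the fact that $x_0$ is the interior point to which the nested convex bodies shrink'' shows that $x_0$ lies at distance $\ge c\,\Theta(t)$ from $M(t)$, with only an appeal to standard convex geometry in the Klein model. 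This is not a convex-geometric fact: a nested family of geodesic balls internally tangent at a common point has perfect inradius--circumradius pinching, shrinks to that point, and yet the limit point lies \emph{on} every boundary, at distance zero. So nestedness plus the pinching $\rho_+\le 4c\rho_-$ and $\rho_-\sim\Theta$ cannot, by themselves, place $x_0$ uniformly deep inside $\hat M(t)$; no argument of the kind you sketch can close this step, and since the gradient and $|D\tilde u|$ bounds are deduced from $u_{\min}\ge c^{-1}\Theta$, the whole proof hinges on it.

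What is actually needed is dynamical information from the flow, which is available at this point of the paper. By Corollary \ref{rescaled-curvature-bounded-from-below} and the monotonicity, $1$-homogeneity and normalization of $F$, the normal speed satisfies $F\ge c^{-1}\Theta(s)^{-1}$ on $M(s)$ for $s\ge t_\delta$. Since $x_0\in\hat M(s)$ for all $s<T^*$ and the nearest point of $M(s)$ to $x_0$ moves towards $x_0$ with speed $F$, the Lipschitz function $d(s)=\operatorname{dist}(x_0,M(s))$ satisfies $d'(s)\le -\inf_{M(s)}F$ a.e., whence
\begin{equation}
\operatorname{dist}(x_0,M(t))\ \ge\ \int_t^{T^*}\inf_{M(s)}F\,ds\ \ge\ c^{-1}\int_t^{T^*}\Theta(s)^{-1}\,ds\ \ge\ c^{-1}\tanh\Theta(t)\ \ge\ \tfrac{1}{2}c^{-1}\Theta(t),
\end{equation}
where the time integral is computed from the ODE $\dot\Theta=-\coth\Theta$. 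This (or the equivalent barrier argument in \cite[Lemma 7.9, Lemma 7.10]{Ger13}, which the paper invokes) is the missing ingredient; once $c^{-1}\le\tilde u\le c$ is established this way, your derivation of the bounds on $v$ and $|D\tilde u|$ goes through as written.
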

Let
\begin{equation}
\varphi = - \int_u^{\Theta(0,T^*)} \vartheta^{-1},
\end{equation}
then
\begin{equation}
\varphi_i = \vartheta^{-1} u_i, \quad \varphi_{ij} = \vartheta^{-1} u_{ij} - \cosh u\, \vartheta^{-2} \, u_i \, u_j,
\end{equation}
and
\begin{equation}
\vartheta^{-2} |D^2 u|^2 + |D \tilde{u}|^4 \cosh^2 u - 2 \vartheta^{-1} |D^2 u| |D \tilde{u}|^2 \cosh u \leq |D^2 \varphi|^2.
\end{equation}
Since $|D^2 \varphi|$ and $|D \tilde{u}|$ are bounded, we conclude that the $C^2$-norm of $\tilde{u}$ is uniformly bounded, where
the covariant derivatives of $\tilde{u}$ and $\varphi$ are taken with respect to $\sigma_{ij}$.
From \cite[Remark 1.5.1, Lemma 2.7.6]{Ger06} we conclude that
\begin{equation}
\frac{\sinh \Theta}{\cosh \Theta} F v
= \varPhi (x, \tau , \tilde{u}, \tilde{u} e^{-\tau}, D\tilde{u}, D^2 \tilde{u}),
\end{equation}
where $\varPhi$ is a smooth function with respect to its arguments, and
\begin{equation}
\begin{split}
\varPhi^{ij} & \equiv \frac{\partial \varPhi}{\partial (-\tilde{u}_{ij})}
= F^{ij}  \Theta \frac{\sinh \Theta}{ \cosh \Theta},\\
\varPhi^{ij,kl} &= F^{ij,kl}  v^{-1} \Theta^2 \frac{ \sinh \Theta}{ \cosh \Theta}.
\end{split}
\end{equation}
Hence by applying first the Krylov and Safonov, then the Schauder estimates, we deduce (cf. \cite[Remark 2.6.2]{Ger06})
\begin{thm}
\label{regularity-rescaled-function}
The rescaled function $\tilde{u}$ satisfies the uniformly parabolic equation
\begin{equation}
\tilde{u}' = - \varPhi + \tilde{u}
\end{equation}
in $Q(\tau_\delta, \infty)$ and $\tilde{u}(\tau, \cdot)$ satisfies a priori estimates in $C^{\infty} (\mathbb{S}^n)$ independently of $\tau$.
\end{thm}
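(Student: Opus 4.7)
The plan is to first derive the parabolic equation for $\tilde{u}$, then verify uniform parabolicity, and finally bootstrap regularity. Starting from the standard scalar flow equation $\tfrac{\partial u}{\partial t} = -Fv$ for the graph function $u$ of $M(t)$ in geodesic polar coordinates centered at $x_0$, I would combine the time change $\tau = -\log\Theta$ (which gives $\tfrac{dt}{d\tau} = \Theta\sinh\Theta/\cosh\Theta$, via the ODE $\dot\Theta = -\coth\Theta$ of \rl{spherical-flow}) with the rescaling $\tilde{u} = u\Theta^{-1}$ (so that $\Theta' = -\Theta$) to compute directly
\[
\tilde{u}' \;=\; \Theta^{-1}\frac{\partial u}{\partial t}\frac{dt}{d\tau} + \tilde{u} \;=\; -\frac{\sinh\Theta}{\cosh\Theta}\,Fv + \tilde{u} \;=\; -\varPhi + \tilde{u},
\]
which is exactly the evolution equation in the statement.

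For uniform parabolicity I would use the explicit formula $\varPhi^{ij} = F^{ij}\Theta\sinh\Theta/\cosh\Theta$ listed just before the theorem. Three ingredients combine: first, the pinching estimate \rl{pinching-estimates-thm} together with \rl{rescaled-curvature-bounded-from-above} and \rc{rescaled-curvature-bounded-from-below} confines the rescaled principal curvatures $\tilde{\kappa}_i = \kappa_i\Theta$ to a fixed compact subset of $\varGamma_+$; second, $0$-homogeneity of $F^{ij}$ then forces $\Theta^2 F^{ij}$ to be uniformly equivalent to a positive multiple of $g^{ij}$; third, \rl{estimates-varphi} makes $g_{ij}$ comparable to $\vartheta^{2}\sigma_{ij}$, and since $\vartheta = \sinh u$ with $u \sim \Theta$ this gives $g^{ij} \sim \Theta^{-2}\sigma^{ij}$. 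Multiplying out, $\varPhi^{ij}$ is uniformly equivalent to $(\Theta^{-1}\tanh\Theta)\,\sigma^{ij}$, and $\Theta^{-1}\tanh\Theta$ stays between two positive constants on the bounded range of $\Theta$ (with limit $1$ as $\Theta\to 0$). The linearized operator is therefore uniformly elliptic with constants independent of $\tau$.

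With parabolicity secured, the preceding lemma already furnishes a uniform $C^2$ bound on $\tilde{u}$, and hence uniform bounds on all the coefficients of the quasilinearized equation together with their H\"older dependence on $(\tau,\xi)$ mediated through $\tilde{u}$ and its first derivatives. Concavity of $F$ passes to concavity of $\varPhi$ in the variable $-\tilde{u}_{ij}$ (this is exactly why $\varPhi^{ij,kl}$ is displayed with $F^{ij,kl}$), so the Krylov--Safonov interior estimates for concave fully nonlinear uniformly parabolic equations produce a uniform $C^{2,\alpha}(\mathbb{S}^n)$ bound on $\tilde{u}$. Once the coefficients are H\"older continuous, a classical parabolic Schauder bootstrap — exactly in the spirit of \cite[Remark~2.6.2]{Ger06} — upgrades this to $C^{k,\alpha}$ for every $k$, which is the asserted $C^\infty(\mathbb{S}^n)$ estimate independent of $\tau$.

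The principal obstacle is the matched scaling in the second step: the vanishing factor $\sinh\Theta/\cosh\Theta$ as the flow contracts must be precisely compensated by the $\Theta^{-2}$ blow-up of $F^{ij}$ relative to $g^{ij}$, and this cancellation is available only because the pinching estimate is uniform in $\tau$. Without uniform pinching, the two-sided bounds on $\tilde{\kappa}_i$ would fail, $\Theta^2F^{ij}$ would degenerate, and both the Krylov--Safonov and Schauder steps would break down at the blow-up time $T^*$.
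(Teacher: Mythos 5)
Your proposal is correct and follows essentially the same route as the paper: the scalar flow equation combined with the time change $\tau=-\log\Theta$ and the rescaling gives $\tilde{u}'=-\varPhi+\tilde{u}$, uniform parabolicity comes from the pinching estimates, the two-sided bounds on $\kappa_i\Theta$ and the equivalence $\Theta^2F^{ij}\sim\sigma^{ij}$ (already noted in the proof of Lemma \ref{estimates-tilde-F}), and then concavity of $F$ permits Krylov--Safonov followed by a Schauder bootstrap, exactly as in the text preceding the theorem. The only slight imprecision is attributing the uniform $C^2$ bound on $\tilde{u}$ to the preceding lemma (which gives only the $C^0$ and gradient bounds); in the paper it comes from the displayed estimate involving $|D^2\varphi|$, which in turn rests on Lemma \ref{estimates-varphi} and the rescaled curvature bounds you already invoke, so nothing essential is missing.
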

\section{Convergence to a sphere}
The aim of this section is to prove that $\tilde{u}$ converges exponentially fast to the constant function $1$ if $F$ is strictly concave or
$F = \frac{1}{n} H$. Comparing the proof in \cite[Section 8]{Ger13}, we should handle a term stemming from the negative
curvature of the ambient space $K_N <0$.
\begin{lem}
\label{lem-K_N-term}
There exists a positive constant $C$ such that
\begin{equation}
F^{kl} g_{kl} \abs{A}^2 -FH \leq C \sum_{i<j} (\kappa_i - \kappa_j)^2.
\end{equation}
\end{lem}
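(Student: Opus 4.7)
The plan is to diagonalize the second fundamental form at a fixed point and reduce the inequality to a purely algebraic estimate in the principal curvatures $\kappa_1,\dots,\kappa_n$. In such an orthonormal frame, writing $F_i = \partial F/\partial\kappa_i$, one has $F^{kl}g_{kl} = \sum_i F_i$, and Euler's identity for the $1$-homogeneous $F$ gives $F = \sum_i F_i\kappa_i$. Substituting these yields
$$F^{kl}g_{kl}\abs{A}^2 - FH = \sum_i F_i \sum_j \kappa_j^2 - \sum_i F_i\kappa_i \sum_j \kappa_j = \sum_{i,j} F_i\kappa_j(\kappa_j-\kappa_i).$$

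The next step is to symmetrize the sum under $i\leftrightarrow j$ and apply the elementary identity
$$F_i\kappa_j - F_j\kappa_i = \tfrac{1}{2}(F_i+F_j)(\kappa_j-\kappa_i) + \tfrac{1}{2}(\kappa_i+\kappa_j)(F_i-F_j).$$
This splits the expression into a diagonal piece $\tfrac{1}{4}\sum_{i,j}(F_i+F_j)(\kappa_i-\kappa_j)^2$, which is already of the desired form, together with a cross term $\tfrac{1}{4}\sum_{i,j}(\kappa_i+\kappa_j)(F_i-F_j)(\kappa_j-\kappa_i)$. Controlling the diagonal piece is immediate: the $F_i$ are $0$-homogeneous in $\kappa$, hence uniformly bounded on the compact set in which the rescaled principal curvatures $\tilde\kappa_i = \kappa_i\Theta$ are trapped by the pinching estimate together with Lemma \ref{rescaled-curvature-bounded-from-above} and Corollary \ref{rescaled-curvature-bounded-from-below}.

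The main obstacle is the cross term. To handle it, I would exploit the symmetry of $F$: if $\kappa'$ denotes the vector obtained from $\kappa$ by swapping the $i$-th and $j$-th entries, then symmetry forces $F_i(\kappa) = F_j(\kappa')$, and the $C^2$-regularity of $F$ on the compact range of $\tilde\kappa$ yields the Lipschitz-type bound
$$\abs{F_i(\kappa) - F_j(\kappa)} = \abs{F_i(\kappa) - F_i(\kappa')} \leq C \abs{\kappa_i - \kappa_j}.$$
Combined with the bound on $\kappa_i + \kappa_j$ coming from the uniform control of the $\tilde\kappa_i$, this estimates the cross term by $C\sum_{i<j}(\kappa_i-\kappa_j)^2$ as well. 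Since both sides of the stated inequality are $2$-homogeneous in $\kappa$, any constant obtained at the rescaled level transfers uniformly to the whole unrescaled flow, completing the proof.
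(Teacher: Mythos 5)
Your proposal is correct, but it takes a genuinely different route from the paper. The paper (following \cite[Lemma 8.2]{Ger13}) sets $\varphi(\kappa)=F^{kl}g_{kl}\abs{A}^2-FH$, checks that $\varphi$ and $D\varphi$ vanish at the umbilic point $(\kappa_n,\dots,\kappa_n)$ (using Euler's relation and the normalization $F(1,\dots,1)=1$), observes that $D^2\varphi$ is homogeneous of degree $0$ and hence uniformly bounded on the cone determined by the pinching estimate, and concludes by a second-order Taylor expansion. You instead compute the deficit explicitly as $\sum_{i,j}F_i\kappa_j(\kappa_j-\kappa_i)$, symmetrize, and split it into the manifestly controllable piece $\tfrac14\sum_{i,j}(F_i+F_j)(\kappa_i-\kappa_j)^2$ plus a cross term, which you handle by the symmetry-plus-mean-value bound $\abs{F_i-F_j}\leq C\abs{\kappa_i-\kappa_j}$; this is a valid and more explicit argument, since it exhibits the deficit directly as a weighted sum of $(\kappa_i-\kappa_j)^2$. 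The paper's Taylor argument buys uniformity for free: it uses only the pinching estimate (so $\kappa/\abs{A}$ stays in a compact subset of $\varGamma_+$) and degree-$0$ homogeneity of $D^2\varphi$, hence the constant is uniform on all of $[0,T^*)$ without invoking the rescaled flow. Your argument, as phrased, leans on the two-sided bounds for $\tilde\kappa_i=\kappa_i\Theta$ from Lemma \ref{rescaled-curvature-bounded-from-above} and Corollary \ref{rescaled-curvature-bounded-from-below}, which are only established for $t\in[t_\delta,T^*)$, whereas the lemma is subsequently used in integral estimates on all of $[0,T^*)$; this is not a real gap, since $[0,t_\delta]$ is covered by the a priori estimates of Lemma \ref{smooth-estimates-u}, and in fact you can avoid the rescaling altogether by normalizing with $\abs{A}$ instead of $\Theta^{-1}$: the $F_i$ are $0$-homogeneous and $D^2F$ is $(-1)$-homogeneous, so on the pinched cone $\abs{F_i-F_j}\leq C\abs{A}^{-1}\abs{\kappa_i-\kappa_j}$, which exactly compensates the factor $\kappa_i+\kappa_j\leq 2\abs{A}$ in your cross term and yields a constant depending only on the pinching constant, valid for all $t\in[0,T^*)$. (Also note that the segment joining $\kappa$ to the swapped point $\kappa'$ stays in the box $[\kappa_1,\kappa_n]^n$, so the mean-value step is legitimate.)
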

\begin{proof}
The proof is similar to \cite[Lemma 8.2]{Ger13}. Let
\begin{equation}
\varphi = F^{kl} g_{kl} \abs{A}^2 -FH. 
\end{equation}
Denote the partial derivatives of $\varphi$ with respect to $\kappa_i$ by $\varphi_i$, then
\begin{equation}
\varphi_j= \sum_{i=1}^n F_{ij} \abs{A}^2 + \sum_{i=1}^n 2 F_i \kappa_j - F_j H - F,\\
\end{equation}
\begin{equation}
\begin{split}
\varphi_{jk} = &\sum_{i=1}^n F_{ijk} \abs{A}^2 + \sum_{i=1}^n 2 F_{ij} \kappa_k + \sum_{i=1}^n 2 F_{ik} \kappa_j \\
 & +2 \delta_{jk} \sum_{i=1}^n F_i - F_{jk} H - F_j - F_k.
\end{split}
\end{equation}
Therefore
\begin{equation}
\varphi(\kappa_n, \cdots, \kappa_n) = 0, \quad \varphi_j(\kappa_n, \cdots, \kappa_n) = 0 \quad \forall j = 1, \dots n.
\end{equation}
by using the Euler's homogeneous relation and the normalization (\ref{normalization-condition}).
Furthermore, $\varphi_{jk}$ are uniformly bounded from above, since $\varphi_{jk}$ are homogeneous of grad 0 and $\frac{\kappa_i}{\abs{A}}$ are
compactly contained in the defining cone.
The statement follows by an argument using Taylor's expansion up to the second order similar to those in \cite[Lemma 8.2]{Ger13}.
\end{proof}
We want to estimate the function
\begin{equation}
f_\sigma = F^{-\alpha} (\abs{A}^2 - nF^2),
\end{equation}
where
\begin{equation}
\alpha = 2 - \sigma,
\end{equation}
and $0< \sigma <1$ small. For simplicity we drop the subscript $\sigma$ of $f_\sigma$.
In the following we always assume that $F$ satisfies the assumption \ref{general-assmption}.

By Lemma \ref{lem-K_N-term} we have the following inequality corresponding to \cite[Lemma 8.3]{Ger13}.
\begin{lem}
Let $F$ be strictly concave, then there exist uniform constants
$\epsilon>0$ and $C>0$, such that
\begin{equation}
\begin{split}
\label{ineq-f}
-F^{ij} f_{ij} + &2\epsilon^2 F^{ij} h_{ki} h^k_j f \leq \alpha F^{-1} F^{ij} F_{;ij} f + 2(\alpha-1) F^{-1} F^{ij} F_i f_j\\
& - 2 \{ h^{ij} - F n F^{ij} \} F^{-\alpha} F_{;ij} - 2 \epsilon^2 \abs{DA}^2 F^{-\alpha} + 2C f.
\end{split}
\end{equation}
\end{lem}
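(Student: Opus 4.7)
The strategy mirrors \cite[Lemma~8.3]{Ger13}; the essential new ingredient compared to the spherical case is the ambient curvature contribution from $K_N=-1$, which will be absorbed using the preceding Lemma \ref{lem-K_N-term}.

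First I expand $-F^{ij}f_{;ij}$ via Leibniz. With $f=F^{-\alpha}g$, $g=\abs{A}^2-nF^2$, and the chain rule $g_{;i}=2h^{kl}h_{kl;i}-2nFF_{;i}$, a routine contraction with $F^{ij}$, combined with the identity $F^{-\alpha}g_j=f_j+\alpha F^{-1}F_jf$, gives
\begin{equation*}
-F^{ij}f_{;ij}=\alpha F^{-1}F^{ij}F_{;ij}f+2\alpha F^{-1}F^{ij}F_if_j+\alpha(\alpha-1)F^{-2}F^{ij}F_iF_jf-F^{-\alpha}F^{ij}g_{;ij},
\end{equation*}
with $F^{ij}g_{;ij}=2F^{ij}h^{kl}{}_{;i}h_{kl;j}+2F^{ij}h^{kl}h_{kl;ij}-2nF^{ij}F_iF_j-2nFF^{ij}F_{;ij}$. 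The double-contracted Hessian $F^{ij}h^{kl}h_{kl;ij}$ is then reduced using the Ricci identity $h_{ij;kl}-h_{kl;ij}=R_{mikl}h^m_j+R_{mkjl}h^m_i$ (Codazzi is totally symmetric in constant sectional curvature) together with the Gauss equation $R_{ijkl}=h_{ik}h_{jl}-h_{il}h_{jk}+K_N(g_{ik}g_{jl}-g_{il}g_{jk})$, and I use $F_{;kl}=F^{ij}h_{ij;kl}+F^{ij,pq}h_{pq;k}h_{ij;l}$ to trade $F^{ij}h^{kl}h_{ij;kl}$ for $h^{kl}F_{;kl}$ plus a gradient term.

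The combination $-2F^{-\alpha}h^{kl}F_{;kl}+2nF^{1-\alpha}F^{ij}F_{;ij}$ collapses exactly to $-2\{h^{ij}-FnF^{ij}\}F^{-\alpha}F_{;ij}$, and the mismatch of the $F^{-1}F^{ij}F_if_j$ coefficients (difference $2\alpha-2(\alpha-1)=2$) is reabsorbed through $F^{-\alpha}g_j=f_j+\alpha F^{-1}F_jf$ into the $F^{ij}F_iF_jf$ pool. What remains splits into: (a) an algebraic residue dominated, via Euler's relation $F^{ij}h_{ij}=F$, by $\abs{K_N}(F^{ij}g_{ij}\abs{A}^2-FH)$ plus a multiple of $F^{ij}h_{ki}h^k_jf$; by Lemma \ref{lem-K_N-term} this is bounded by $C\sum_{i<j}(\kappa_i-\kappa_j)^2$, which under the pinching estimate of Lemma \ref{pinching-estimates-thm} is comparable to $g=F^{\alpha}f$ (both vanish precisely at an umbilic point), yielding the $2Cf$ on the right and the $-2\epsilon^2F^{ij}h_{ki}h^k_jf$ that I carry over to the left; and (b) a gradient residue $2F^{-\alpha}h^{kl}F^{ij,pq}h_{pq;k}h_{ij;l}-2F^{-\alpha}F^{ij}h^{kl}{}_{;i}h_{kl;j}$, to which I apply strict concavity.

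In a frame diagonalising $h$, the Andrews-type decomposition of $F^{ij,pq}$ combined with $\min_i\kappa_i\ge\epsilon F$ (from pinching) yields a uniform spectral gap on the subspace of $h_{ij;k}$ transverse to the radial direction of $\kappa$, giving the coercive bound $-2\epsilon^2\abs{DA}^2F^{-\alpha}$; the trace component $F^{pq}h_{pq;k}=F_{;k}$ contributes a multiple of $F^{-2}F^{ij}F_iF_jf$ that is absorbed into $\alpha(\alpha-1)F^{-2}F^{ij}F_iF_jf$ by choosing $\sigma>0$ small, noting $\alpha(\alpha-1)=(2-\sigma)(1-\sigma)>0$. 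The main obstacle is precisely this final step: strict concavity, rather than mere concavity, is exactly what is needed to dominate the wrong-sign gradient $F^{ij}h^{kl}{}_{;i}h_{kl;j}$ by $F^{ij,pq}h_{pq;k}h_{ij;l}$ in the non-radial directions, and the bookkeeping of the radial component against $F^{ij}F_iF_jf$ has to be arranged so that no uncontrolled residue is left over; the correct calibration of $\epsilon$ and $\sigma$ is the delicate part, after which collecting all contributions yields the stated inequality.
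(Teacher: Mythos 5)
Your overall route is the intended one: redo the computation of \cite[Lemma 8.3]{Ger13} for $f=F^{-\alpha}(\abs{A}^2-nF^2)$ and control the new ambient term coming from $K_N=-1$, namely $2F^{-\alpha}\{F^{kl}g_{kl}\abs{A}^2-HF\}$, by Lemma \ref{lem-K_N-term} together with $\sum_{i<j}(\kappa_i-\kappa_j)^2=n\abs{A}^2-H^2\le n(\abs{A}^2-nF^2)$, which produces the extra $2Cf$ (note this last inequality already follows from concavity, $F\le\tfrac1nH$; the pinching estimate is not what is needed there). However, the decisive step is missing. After the Simons-type commutation the cubic zero-order terms are $-2F^{-\alpha}\{F\tr(A^3)-\abs{A}^2F^{kl}h_{rk}h^r_l\}$, and to obtain the term $2\epsilon^2F^{ij}h_{ki}h^k_jf$ on the left one needs the uniform algebraic gap estimate $F\tr(A^3)-\abs{A}^2F^{ij}h_{ki}h^k_j\ge\epsilon_0\,F^{ij}h_{ki}h^k_j\,(\abs{A}^2-nF^2)$ on the pinching cone; this is exactly where strict concavity of $F$ is used (and why $F=\tfrac1nH$ has to be treated separately via Huisken's corresponding algebraic lemma in \cite{Hui84}). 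In your write-up this estimate is neither stated nor proved: you only assert that the ``algebraic residue'' consists of the $K_N$-term ``plus a multiple of $F^{ij}h_{ki}h^k_jf$'' and then carry $-2\epsilon^2F^{ij}h_{ki}h^k_jf$ to the left, which presupposes precisely the sign and uniform size of that multiple, i.e.\ the content of the lemma.

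Relatedly, you place strict concavity in the wrong spot. In the expansion of $-F^{-\alpha}F^{ij}(\abs{A}^2)_{;ij}$ the gradient term enters as $-2F^{-\alpha}F^{ij}h^{kl}{}_{;i}h_{kl;j}$, which is already favorable since $(F^{ij})>0$; combined with the uniform positive definiteness of $F^{ij}$ on the pinching cone (a consequence of monotonicity plus the pinching of Lemma \ref{pinching-estimates-thm}, not of strict concavity) it directly gives $-2\epsilon^2\abs{DA}^2F^{-\alpha}$. Likewise $+2F^{-\alpha}h^{kl}F^{ij,rs}h_{ij;k}h_{rs;l}\le0$ by mere concavity and $h^{kl}>0$. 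So the ``wrong-sign gradient'' you propose to dominate by a spectral-gap argument on $F^{ij,pq}$ is not the obstruction; the genuine difficulty is the zero-order gap estimate above, and leaving it out is a real gap. (Smaller points: converting $2F^{-1}F^{ij}F_if_j$ back through $F^{-\alpha}g_j=f_j+\alpha F^{-1}F_jf$ also produces a mixed $DF\cdot DA$ term that must be absorbed by Cauchy--Schwarz into the good $\abs{DA}^2F^{-\alpha}$ and $F^{-2}F^{ij}F_iF_jf$ terms, and your commutation formula for $h_{ij;kl}-h_{kl;ij}$ is not quite the correct space-form Simons identity, though both are repairable.)
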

Corresponding to  \cite[Lemma 8.5]{Ger13} we have
\begin{lem}
Let $F$ be strictly concave, then there exist positive constants $C$ and $c$ such that for any $p \geq 2$, any $\delta>0$
and any $0 \leq t< T^*$
\begin{equation}
\begin{split}
\label{ellip-int-ineq}
\epsilon^2 \int_M F^{ij} &h_{ki} h^k_j f^p \leq \{ \delta^{-1} c (p-1) +c \} \int_M F^{ij} f_i f_j f^{p-2}\\
& + \{ \delta c (p-1) +c \} \int_M \abs{DA}^2 F^{-\alpha} f^{p-1} + 2C \int_M f^p.
\end{split}
\end{equation}
\end{lem}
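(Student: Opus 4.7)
The plan is to derive the integral inequality by multiplying the pointwise inequality \eqref{ineq-f} by $f^{p-1}$ and integrating over the closed hypersurface $M$, then systematically integrating by parts on the terms containing second derivatives. Concretely, I would:

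First, multiply both sides of \eqref{ineq-f} by $f^{p-1}$ and integrate. The left-hand side produces the desired term $\epsilon^2 \int_M F^{ij}h_{ki}h^k_j f^p$ plus $-\int_M F^{ij} f_{ij} f^{p-1}$. Integrate the latter by parts: since $M$ is closed, this yields $(p-1)\int_M F^{ij} f_i f_j f^{p-2}$ plus a term $\int_M (F^{ij})_{;j} f_i f^{p-1}$. The divergence $(F^{ij})_{;j}$ is controlled via the Codazzi equation: in the ambient hyperbolic geometry this produces $F^{ij,kl} h_{kl;}{}^i$ up to a lower-order curvature term, which can be estimated pointwise by $|DA|$. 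A Cauchy–Schwarz applied to this cross term, with weight $\delta$ on the $|DA|^2 F^{-\alpha} f^{p-1}$ factor and weight $\delta^{-1}$ on the $F^{ij}f_if_j f^{p-2}$ factor, is exactly what produces the structure $\{\delta^{-1} c(p-1)+c\}$ and $\{\delta c(p-1)+c\}$ in the statement.

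Next, the terms on the right of \eqref{ineq-f} containing $F_{;ij}$ must be integrated against $f^{p-1}$ as well. I would integrate by parts the two expressions $\alpha F^{-1} F^{ij} F_{;ij} f \cdot f^{p-1}$ and $-2\{h^{ij}-FnF^{ij}\}F^{-\alpha} F_{;ij} f^{p-1}$. The differentiations that fall on $F^{-1}$, $F^{-\alpha}$, $f^{p-1}$, or on $\{h^{ij}-FnF^{ij}\}$ all yield factors involving either $F_k$ or $f_k$ or $h_{ij;k}$. Using homogeneity and concavity of $F$, one has $|DF|^2 \leq c F^{ij} F_i F_j$, and from the definition $f = F^{-\alpha}(|A|^2-nF^2)$ one has $|Df| \lesssim F^{-\alpha}|A||DA| + |f|\,|DF|/F$; both estimates let me convert all raw derivative terms into the two admissible types $F^{ij}f_if_j f^{p-2}$ and $|DA|^2 F^{-\alpha}f^{p-1}$. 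The remaining gradient-flux term already present in \eqref{ineq-f}, namely $2(\alpha-1)F^{-1}F^{ij}F_if_j\cdot f^{p-1}$, is absorbed into $\int F^{ij}f_if_j f^{p-2}$ after one further Cauchy–Schwarz. The last term $2Cf \cdot f^{p-1}$ contributes directly the $2C\int_M f^p$ on the right.

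The main obstacle I anticipate is keeping the bookkeeping clean when the factor $p$ appears in every integration by parts, so that in the end only the terms $(p-1)\delta^{-1}$ and $(p-1)\delta$ grow with $p$ while the remainder is uniformly bounded. The strategy for this is to apply Young's inequality in the form $ab \leq \tfrac{1}{2}\delta a^2 + \tfrac{1}{2}\delta^{-1}b^2$ only to the genuinely mixed products of $|DA|$ and $|Df|/\sqrt{f}$ with the chain-rule coefficient carrying the $(p-1)$, while absorbing all remaining $O(1)$-coefficient products by standard Cauchy–Schwarz. The ambient-curvature contribution $K_N F^{ij}g_{ij}F\cdot f^{p-1}$ coming from the Codazzi commutator in hyperbolic space can be estimated pointwise by $cf$ using Lemma~\ref{lem-K_N-term} (this is precisely where strict concavity of $F$ is needed, as it ensures $F^{kl}g_{kl}|A|^2 - FH \lesssim \sum_{i<j}(\kappa_i-\kappa_j)^2 \lesssim F^\alpha f$), and then absorbed into the $2C\int_M f^p$ term. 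Once all terms are classified this way, the inequality \eqref{ellip-int-ineq} follows exactly in the form stated.
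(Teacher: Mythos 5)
Your proposal follows essentially the same route as the paper, which at this point simply invokes Gerhardt's Lemma 8.5: multiply the pointwise inequality \eqref{ineq-f} by $f^{p-1}$, integrate by parts over the closed hypersurface $M$, control the divergence and $F_{;ij}$ terms by $|DF|\le c|DA|$ together with the pinching estimates, and split the $(p-1)$-weighted cross terms by Young's inequality with weights $\delta,\delta^{-1}$ so that everything reduces to the two admissible integrals plus $2C\int_M f^p$. One small misattribution — strict concavity is needed to establish \eqref{ineq-f} itself (it produces the good terms $2\epsilon^2 F^{ij}h_{ki}h^k_j f$ and $-2\epsilon^2|DA|^2F^{-\alpha}$), not for Lemma \ref{lem-K_N-term}, whose contribution is already absorbed in the $2Cf$ term of \eqref{ineq-f} — but this does not affect the argument.
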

Parallel to \cite[Lemma 8.6]{Ger13} we have
\begin{lem}
Let $F$ be strictly concave, then there exist $C_1>0$ and $\sigma_0>0$ such that for all
\begin{equation}
p \geq 4c \epsilon^{-2}, \quad \sigma \leq \min (\tfrac{1}{4} c^{-1} \epsilon^3 p ^{-1/2}, \sigma_0),
\end{equation}
the estimate
\begin{equation}
\norm{f}_{p,M} \leq C_1 \quad \forall t \in [0, T^*)
\end{equation}
holds, where $C_1 = C_1( M_0, p)$ and $\sigma_0 = \sigma_0(F, M_0)$.
\end{lem}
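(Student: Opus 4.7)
The plan is a Stampacchia-Moser type $L^p$ iteration on $f=f_\sigma$, modelled on \cite[Lemma~8.6]{Ger13} and adapted to compensate for the negatively curved ambient space. First I would multiply the pointwise inequality $(\ref{ineq-f})$ by $pf^{p-1}$ and integrate over $M(t)$. After integration by parts on the $-F^{ij}f_{ij}$ term, using the Codazzi equation to control the divergence of $F^{ij}$, the second-order contribution produces the positive gradient reservoir $p(p-1)\int_M F^{ij}f_i f_j f^{p-2}$ together with harmless lower-order remainders. The term $\alpha F^{-1}F^{ij}F_{;ij}f$ on the right of $(\ref{ineq-f})$ should be rewritten through the evolution equation for $F$ (as in the derivation of $(\ref{eq-F-tilde})$) so that, combined with the evolution of the measure $\dot{d\mu}=-FH\,d\mu$, it merges with $\tfrac{d}{dt}\int_M f^p\,d\mu_t$ modulo quadratic remainders. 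The term $-2\{h^{ij}-nFF^{ij}\}F^{-\alpha}F_{;ij}$ yields, after one more integration by parts, a contribution of the type $|DA|\cdot|Df|\cdot f^{p-2}$ which, together with the explicitly negative $-2\epsilon^2|DA|^2 F^{-\alpha}f^{p-1}$, provides additional negativity to absorb cross terms.

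Second, the surviving positive quantity $2\epsilon^2\int_M F^{ij}h_{ki}h^k_j f^p$ inherited from the left-hand side of $(\ref{ineq-f})$ is disposed of by the integral inequality $(\ref{ellip-int-ineq})$. Choosing $\delta=\delta(p)$ so that both $\delta c(p-1)+c$ and $\delta^{-1}c(p-1)+c$ fit inside the available gradient reservoirs on $f$ and on $A$ pins down the explicit thresholds $p\geq 4c\epsilon^{-2}$ and $\sigma\leq\tfrac{1}{4}c^{-1}\epsilon^3 p^{-1/2}$; the additional smallness $\sigma\leq\sigma_0(F,M_0)$ is what makes the Taylor expansion of Lemma~\ref{lem-K_N-term} and the initial-data prefactor in the inverse-concavity estimate on $F$ effective.

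The new feature compared with \cite{Ger13} is the unabsorbable remainder $2C\int_M f^p\,d\mu_t$ coming from the negative sectional curvature of $\mathbb{H}^{n+1}$ via Lemma~\ref{lem-K_N-term}. After all absorptions, the scheme collapses to a differential inequality of the form
\[
\frac{d}{dt}\int_M f^p\,d\mu_t\;\leq\;2Cp\int_M f^p\,d\mu_t,
\]
and since $T^*<\infty$ by Lemma~\ref{spherical-flow} and Lemma~\ref{compare-spherical-barrier}, Gronwall's inequality yields $\|f(t)\|_{p,M(t)}\leq e^{2CT^*}\|f(0)\|_{p,M_0}\leq C_1(M_0,p)$ uniformly on $[0,T^*)$. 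The main obstacle I anticipate is the bookkeeping in the absorption step: strict concavity of $F$ is precisely what gives the coercive control of $|DA|^2$ by $F^{ij}h_{ki}h^k_j$ times small factors via $(\ref{ellip-int-ineq})$, and without it the scheme breaks down; the explicit thresholds on $p$ and $\sigma$ are exactly what is required to keep this balance admissible, while the negatively curved ambient merely contributes the harmless Gronwall factor $e^{2CpT^*}$.
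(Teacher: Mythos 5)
Your proposal is correct and follows essentially the same route as the paper: multiply the parabolic inequality for $f$ by $pf^{p-1}$, integrate by parts using $\frac{d}{dt}d\mu_t=-FH\,d\mu_t$, absorb the bad term $\sigma p\int_M F^{ij}h_{ki}h^k_j f^p$ via the integral inequality (\ref{ellip-int-ineq}) with $\delta=\epsilon p^{-1/2}$ (which is exactly what forces $p\geq 4c\epsilon^{-2}$ and $\sigma\leq\tfrac14 c^{-1}\epsilon^3 p^{-1/2}$), and finish with Gronwall on the finite interval $[0,T^*)$, the curvature term $C\int_M f^p$ only entering the Gronwall constant. The only cosmetic difference is that the paper imports the parabolic inequality for $f$ directly from \cite[(8.30)]{Ger13} rather than re-deriving it from (\ref{ineq-f}) and the evolution of $F$ as you sketch.
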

\begin{proof}
Multiply \cite[(8.30)]{Ger13} with $p f^{p-1}$ and integrate by parts,
and note that
\begin{equation}
d \mu_t = \mu_t \, dx \, \textrm{ on }\, M_t,
\end{equation}
where
\begin{equation}
\frac{d}{dt} \mu_t = \frac{d}{dt} \sqrt{g} = \tfrac{1}{2} \mu_t g^{ij} \dot{g}_{ij} = - FH \mu_t,
\end{equation}
thus
\begin{equation}
\frac{d}{dt} \int_M f^p = p \int_M f^{p-1} f' - \int_M HF f^p,
\end{equation}
and
\begin{equation}
\begin{split}
\label{estimates-ddtf}
\frac{d}{dt} \int_M f^p + \tfrac{1}{2} p(p-1) \int_M F^{ij} f_i f_j f^{p-2} + \epsilon^2 p \int_M \abs{DA}^2 F^{- \alpha} f^{p-1}&\\
&\msp[-350] \leq \sigma p \int_M F^{ij} h_{ki} h^k_j f^p + 4Cp \int_M f^p.
\end{split}
\end{equation}
By choosing
\begin{equation}
c_0 = \tfrac{1}{4} c, \quad \sigma \leq \min (\epsilon^3 p ^{-1/2} {c_0}^{-1}, \sigma_0), \quad \delta = \epsilon p^{-1/2},
\end{equation}
and by using (\ref{ellip-int-ineq}), the right-hand side of inequality (\ref{estimates-ddtf}) can be estimated from above by
\allowdisplaybreaks[4]
\begin{equation}
\label{estimates-ddtf-part2}
\begin{split}
& \quad \epsilon p^{1/2} c_0^{-1} \{ \epsilon^2 \int_M F^{ij} h_{ki} h^k_j f^p \}+ 4Cp \int_M f^p \\
&\leq \epsilon p^{1/2} c_0^{-1} \{ \delta^{-1}c(p-1) +c \} \int_M F^{ij} f_i f_j f^{p-2}\\
&  \quad + \epsilon p^{1/2} c_0^{-1} \{ \delta c (p-1) + c\} \int_M \abs{DA}^2 F^{-\alpha} f^{p-1} +
\{ 2C \epsilon p^{1/2} c_0^{-1} + 4Cp\} \int_M f^p\\
&= c_0^{-1} \{ p(p-1)c + \epsilon p^{1/2} c\} \int_M F^{ij} f_i f_j f^{p-2}\\
& \quad + c_0^{-1} \{ \epsilon^2 (p-1)c
 + \epsilon p^{1/2} c\} \int_M \abs{DA}^2 F^{-\alpha} f^{p-1} + \{ 2C \epsilon p^{1/2} c_0^{-1} + 4Cp\} \int_M f^p\\
&\leq \tfrac{1}{2}p(p-1)  \int_M F^{ij} f_i f_j f^{p-2} + \tfrac12 \epsilon^2 (p-1) \int_M \abs{DA}^2 F^{-\alpha} f^{p-1}
+ 5C p \int_M f^p.
\end{split}
\end{equation}
From (\ref{estimates-ddtf}), (\ref{estimates-ddtf-part2}) we conclude that
\begin{equation}
\frac{d}{dt} \int_M f^p \leq 5Cp \int_M f^p,
\end{equation}
and the Gronwall's lemma leads to
\begin{equation}
\int_M f^p \leq {\int_M f^p } \big|_{t=0} \cdot \exp (5CpT^*),
\end{equation}
\begin{equation}
\norm{f}_p = \left( \int_M f^p \right)^\frac{1}{p} \leq e^{5CT^*} (\abs{M_0}+1) \sup_{0 \leq \sigma \leq 1/2} \sup_{M_0} f_\sigma.
\end{equation}
\end{proof}
To proceed further, we use the Stampacchia iteration scheme as in the Huisken's paper \cite[Theorem 5.1]{Hui84},
as well as \cite[Theorem 5.1]{Hui86}.
Note that $\mathbb{H}^{n+1}$ is simply connected and has constant sectional curvature $K_N = -1$,
thus the Sobolev inequality in \cite[Theorem 2.1]{Hof74} has the form
\begin{lem}
\label{sobolev-inequality}
Let $v$ be a nonnegative Lipschitz function on $M$, then there exists a constant $c=c(n)>0$, such that
\begin{equation}
(\int_M \abs{v}^{\frac{n}{n-1}})^{\frac{n-1}{n}} \leq c \{ \int_M \abs{Dv} + \int_M H \abs{v}\}.
\end{equation}
\end{lem}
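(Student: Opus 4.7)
The plan is to apply the Hoffman--Spruck Sobolev inequality (Theorem 2.1 of \cite{Hof74}) directly to our setting. That theorem establishes, for a nonnegative Lipschitz function $v$ on an $n$-dimensional submanifold $M$ of a Riemannian manifold $\tilde M$ whose sectional curvatures are bounded above by some $b^2 \geq 0$, the Sobolev-type inequality
\begin{equation*}
\left(\int_M v^{n/(n-1)}\right)^{(n-1)/n} \leq c(n,\alpha)\left\{\int_M \abs{Dv} + \int_M \abs{H}\,v\right\},
\end{equation*}
provided that two auxiliary hypotheses hold: a volume restriction on $\supp v$ of the form $b^2 (1-\alpha)^{-2/n} \omega_n^{-2/n} \abs{\supp v}^{2/n} \leq 1$ with $\alpha \in (0,1)$, and a mild condition on the injectivity radius of $\tilde M$ along $\supp v$.

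In our situation $\tilde M = \mathbb{H}^{n+1}$ has constant sectional curvature $-1 \leq 0$, so I may take $b = 0$ in the theorem. With this choice the volume condition becomes vacuous, because its left-hand side is identically $0$ regardless of $\abs{\supp v}$, and therefore every nonnegative Lipschitz $v$ is admissible. Moreover, since $\mathbb{H}^{n+1}$ is simply connected with nonpositive curvature, the Cartan--Hadamard theorem asserts that its exponential map is a global diffeomorphism, so the injectivity radius is infinite at every point; the injectivity-radius hypothesis of Hoffman--Spruck is then trivially met. Fixing an admissible value, e.g.\ $\alpha = 1/2$, absorbs that parameter into a constant depending only on $n$.

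Under these specializations the Hoffman--Spruck inequality collapses to exactly the inequality stated in the lemma. The entire argument is the observation that nonpositive ambient curvature together with simple-connectedness eliminates both auxiliary hypotheses of \cite{Hof74}, so no real obstacle remains; in particular, no curvature term appears on the right-hand side (as would be the case for $b > 0$), and the constant depends only on the dimension $n$.
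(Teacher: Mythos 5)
Your argument is correct and is exactly the paper's justification: the lemma is obtained by specializing Hoffman--Spruck's Theorem 2.1, noting that the constant sectional curvature $-1\le 0$ allows $b=0$ (making the volume restriction vacuous) and that simple connectedness plus nonpositive curvature gives infinite injectivity radius, so both auxiliary hypotheses drop out. Fixing $\alpha$ to absorb it into $c(n)$ is also what the paper implicitly does, so there is nothing to add.
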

Corresponding to \cite[Theorem 8.7]{Ger13}, we have
\begin{thm}
\label{decay-difference-principal-curvatures}
Let $F$ be strictly concave or $F = \frac{1}{n} H$, then there exist constants $\delta>0$ and $c_0>0$, such that
\begin{equation}
\abs{A}^2 - nF^2 \leq c_0 F^{2-\delta}.
\end{equation}
\end{thm}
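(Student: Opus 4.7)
The plan is to bootstrap the uniform $L^p$-bound on $f_\sigma$ established in the preceding lemma up to a uniform $L^\infty$-bound via the Stampacchia iteration technique employed by Huisken in \cite{Hui84,Hui86}. Once $\sup_{M \times [0,T^*)} f_\sigma \leq C$ is shown for a suitably fixed small $\sigma > 0$, the desired pointwise estimate
\begin{equation*}
\abs{A}^2 - nF^2 \leq C F^{\alpha} = C F^{2-\sigma}
\end{equation*}
follows at once from the definition $f_\sigma = F^{-\alpha}(\abs{A}^2 - nF^2)$ with $\alpha = 2-\sigma$, so that one takes $\delta = \sigma$ and $c_0 = C$.

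First I would refine the computation leading to \eqref{estimates-ddtf} by testing the evolution inequality for $f$ against $p(f-k)_+^{p-1}$ instead of $pf^{p-1}$, for a truncation level $k > 0$. Writing $f_k = (f-k)_+$ and $A(k,t) = \{\xi \in M : f(t,\xi) > k\}$, the same integration by parts, combined with Lemma \ref{lem-K_N-term} to absorb the contribution of $K_N = -1$ and with \eqref{ineq-f}, yields a differential inequality of the form
\begin{equation*}
\frac{d}{dt}\int_M f_k^p + c\, p(p-1)\int_M F^{ij}(f_k)_i (f_k)_j f_k^{p-2} \leq Cp \int_M f_k^p + C k^p \abs{A(k,t)}.
\end{equation*}
Feeding the Sobolev inequality of Lemma \ref{sobolev-inequality} into $v = f_k^{p/2}$, together with the uniform bound on $H$ coming from the pinching estimate and Lemma \ref{rescaled-curvature-bounded-from-above}, converts this into a reverse-H\"older type estimate $\norm{f_k}_{p\bar q, A(k,t)} \leq C(p) \norm{f_k}_{p, A(k,t)}$ with $\bar q = n/(n-1) > 1$.

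Next I would run the standard Stampacchia iteration. Setting $\Psi(k) := \sup_t \int_M f_k^p\, d\mu_t$, the previous step gives a discrete self-improving inequality $\Psi(k') \leq C (k'-k)^{-\gamma} \Psi(k)^{1+\beta}$ for $k' > k$ and some $\beta, \gamma > 0$. The classical iteration lemma then forces $\Psi$ to vanish above some threshold $k_\infty$, provided $\Psi(k_0)$ is small, which is guaranteed by choosing $k_0$ sufficiently large in terms of the $L^p$-bound from the preceding lemma. This yields $f_\sigma \leq k_\infty$ uniformly on $M \times [0, T^*)$.

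The main obstacle is controlling the extra term produced by the strictly negative ambient curvature $K_N = -1$ in the evolution equation for $\abs{A}^2 - nF^2$, which is absent in the spherical setting of \cite{Ger13}. This is precisely where Lemma \ref{lem-K_N-term} enters: the bad term $F^{kl}g_{kl}\abs{A}^2 - FH$ is dominated by $C\sum_{i<j}(\kappa_i - \kappa_j)^2$, and this latter quantity is in turn absorbed by the strict-concavity gradient term already on the right of \eqref{ineq-f} after $\epsilon$, and hence $\sigma$, is taken small enough. For the borderline case $F = \tfrac{1}{n}H$ strict concavity fails, but the identity $\abs{A}^2 - nF^2 = \tfrac{1}{n}\sum_{i<j}(\kappa_i - \kappa_j)^2$ reduces the statement to the classical umbilicity estimate, and the argument of \cite[Theorem 5.1]{Hui86} transfers directly by using the pinching $\kappa_1 \geq \epsilon\kappa_n$ from Lemma \ref{pinching-estimates-thm} in place of Huisken's original pinching hypothesis.
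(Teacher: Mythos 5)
Your proposal follows essentially the paper's own argument: the paper likewise runs Huisken's Stampacchia iteration from \cite[Theorem 5.1]{Hui84} on the truncations $f_{\sigma,k}=\max(f_\sigma-k,0)$ with $v=f_{\sigma,k}^{p/2}$, feeds in the Hoffman--Spruck Sobolev inequality of Lemma~\ref{sobolev-inequality}, relies on the preceding $L^p$-bounds (where the negative ambient curvature has already been absorbed via Lemma~\ref{lem-K_N-term}), and disposes of the case $F=\tfrac1n H$ by citing \cite{Hui86}. One small correction: $H$ is \emph{not} uniformly bounded on $[0,T^*)$ (only $H\Theta$ is), so the term $\int_{A(k)}H^2 f_\sigma^p$ cannot be removed by an $L^\infty$-bound on $H$; it has to be absorbed, exactly as in Huisken's iteration, by H\"older's inequality combined with the high-$p$ integral estimates for $f_\sigma$ at a slightly larger exponent.
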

\begin{proof}
As in the proof of \cite[Theorem 5.1]{Hui84} let $f_{\sigma,k}= \max (f_\sigma-k,0)$ for all $k \geq k_0 = \sup_{M_0} f_\sigma$ and denote
by $A(k)$ the set where $f_\sigma>k$. We obtain with $v= f_{\sigma,k}^{p/2}$ for $p \geq 4c \epsilon^{-2}$,
\begin{equation}
\begin{split}
\frac{d}{dt} \int_{A(k)} v^2 + \int_{A(k)} |Dv|^2 &\leq \sigma p \int_{A(k)} H^2 f_\sigma^p + 5Cp \int_{A(k)} f_\sigma^p\\
&\leq C(p) \int_{A(k)} H^2 f_\sigma^p.
\end{split}
\end{equation}
By applying Lemma \ref{sobolev-inequality} we can bound $f_\sigma$ for $\sigma$ small as in the proof of \cite[Theorem 5.1]{Hui84}.
The case $F = \frac{1}{n} H$ is proved in \cite[Lemma 5.1]{Hui86}.
\end{proof}
\begin{lem}
Let $F$ be strictly concave or $F= \frac{1}{n} H$ and $\tilde{M}(\tau)$ be the rescaled hypersurfaces,
then there are constants $c, \delta>0$ such that
\begin{equation}
\label{integral-estimates-D-tilde-A}
\int_{\tilde{M}} |D \tilde{A}|^2 \leq c e^{-\delta \tau} \quad \forall \tau_0 \leq \tau < \infty,
\end{equation}
where
\begin{equation}
\tau_0 = -\log \Theta(0,T^*), \quad |D \tilde{A}|^2 = \Theta^2 g^{ij} h^k_{l;i} \Theta h^l_{k;j} \Theta.
\end{equation}
\end{lem}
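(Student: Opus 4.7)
The strategy is to translate the pointwise pinching of Theorem \ref{decay-difference-principal-curvatures} to the rescaled picture, then combine it with an integrated Simons-type evolution equation for $|A|^2 - nF^2$, and finally upgrade the resulting time-integrated bound to a pointwise-in-$\tau$ bound via the $C^\infty$-regularity of Theorem \ref{regularity-rescaled-function}.

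\emph{Step 1 (pointwise rescaling).} First I would rescale the estimate $|A|^2 - nF^2 \leq c_0 F^{2-\delta}$. With $\tilde F = \Theta F$ and $|\tilde A|^2 = \Theta^2 |A|^2$, combined with the two-sided uniform bounds on $\tilde F$ from Lemma \ref{estimates-tilde-F} and Corollary \ref{rescaled-curvature-bounded-from-below} and the identity $\Theta = e^{-\tau}$, this becomes
\[
|\tilde A|^2 - n\tilde F^2 \leq C e^{-\delta\tau}.
\]
Since the area $|\tilde M(\tau)|$ is uniformly bounded (Theorem \ref{regularity-rescaled-function}), integration yields the $L^1$-decay $\int_{\tilde M}(|\tilde A|^2 - n\tilde F^2)\, d\mu_{\tilde M} \leq C e^{-\delta\tau}$.

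\emph{Step 2 (integral differential inequality).} Next I would use the evolution equation of $|A|^2 - nF^2$ under the flow $(\ref{flow-eq})$. Via Simons-type identities it produces the dissipation term $-2F^{ij}h^k_{l;i}h^l_{k;j}$ together with zeroth-order terms that are cubic or quartic in the curvatures and a contribution stemming from $K_N=-1$. Integrating over $M(t)$ (with measure evolution $\partial_t d\mu = -FH\, d\mu$) and passing to the rescaled time via $dt = \Theta \tanh\Theta\, d\tau$, I expect an inequality of the form
\[
\frac{d}{d\tau}\int_{\tilde M}(|\tilde A|^2 - n\tilde F^2)\, d\mu_{\tilde M} + c \int_{\tilde M}|D\tilde A|^2\, d\mu_{\tilde M} \leq C \int_{\tilde M}(|\tilde A|^2 - n\tilde F^2)\, d\mu_{\tilde M}.
\]
The coercivity $\tilde F^{ij}\tilde h^k_{l;i}\tilde h^l_{k;j} \geq c\, |D\tilde A|^2$ comes from the pinching $\tilde \kappa_1 \geq \epsilon\, \tilde \kappa_n$ (Lemma \ref{pinching-estimates-thm}) together with the assumption on $F$. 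The dangerous $K_N$-contribution is absorbed via Lemma \ref{lem-K_N-term}, which bounds it by $\sum_{i<j}(\kappa_i - \kappa_j)^2$, and hence by $|A|^2 - nF^2$ up to a constant.

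\emph{Step 3 (from time-integrated to pointwise).} Combining Steps 1 and 2 and integrating from $\tau$ to $\infty$ gives
\[
\int_\tau^\infty \int_{\tilde M}|D\tilde A|^2\, d\mu_{\tilde M}\, d\tau' \leq C e^{-\delta\tau}.
\]
The uniform $C^\infty$-estimates of Theorem \ref{regularity-rescaled-function} imply that the function $\tau \mapsto \int_{\tilde M}|D\tilde A|^2\, d\mu_{\tilde M}$ has uniformly bounded $\tau$-derivative; the elementary fact that a nonnegative function with bounded derivative and exponentially decaying tail integral is itself exponentially small then upgrades the above to the pointwise bound $\int_{\tilde M}|D\tilde A|^2\, d\mu_{\tilde M} \leq C e^{-\delta'\tau}$ (possibly with $\delta' = \delta/2$, which is still of the claimed form).

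\textbf{Main obstacle.} The main difficulty is Step 2: the Simons identity for the evolution of $|A|^2$ under a general curvature function $F$ generates several terms that cannot be individually signed, and the ambient curvature $K_N = -1$ produces a contribution with a sign opposite to the spherical situation of \cite{Ger13}. The role of Lemma \ref{lem-K_N-term} is precisely to rewrite this extra hyperbolic term using the pinching quantity, thereby keeping the gradient dissipation intact; tracking the powers of $\Theta$ carefully through the change of variables $dt = \Theta\tanh\Theta\, d\tau$ and $d\mu_M = \Theta^n d\mu_{\tilde M}$ then produces the clean form of the inequality above.
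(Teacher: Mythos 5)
Your plan is viable and cites the right ingredients, but it takes a more roundabout route than the paper, and its crux is only asserted. The paper's proof works at each fixed time and needs no evolution equation at all: it sets $f=F^{-2}\{\abs{A}^2-nF^2\}$ (the case $\alpha=2$), notes $f\leq c_0F^{-\delta}\leq c\,e^{-\delta\tau}$ by Theorem \ref{decay-difference-principal-curvatures} together with the two-sided bounds on $\tilde F=F\Theta$ (Lemma \ref{estimates-tilde-F}, Corollary \ref{rescaled-curvature-bounded-from-below}), observes that the uniform $C^\infty$-estimates of Theorem \ref{regularity-rescaled-function} give $\abs{D^mA}\leq c\abs{A}$, and then simply integrates the already established pointwise inequality \eqref{ineq-f} over $M$, using integration by parts and \eqref{estimates-DmA} to control all remaining terms; this yields $2\epsilon^2\int_M\abs{DA}^2F^{-2}\leq c\int_M f$ at every fixed time, and rescaling (using $\Theta=e^{-\tau}$ and $\abs{M}\sim\Theta^n$) finishes the proof. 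In particular no space-time integration and no upgrade from a tail integral to a pointwise bound are needed, so the loss from $\delta$ to $\delta/2$ in your Step 3 is avoided; your Step 3 itself is sound, since the boundedness of the $\tau$-derivative of $\int_{\tilde M}\abs{D\tilde A}^2$ does follow from the uniform parabolic estimates, and your interpolation lemma is correct.

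The genuine soft spot is Step 2, which you present as ``I expect an inequality of the form\dots''. Deriving it amounts to redoing the hardest computation of the section: besides the zeroth-order term coming from $K_N=-1$ (which Lemma \ref{lem-K_N-term} handles, as you say), the Simons-type identity for $\abs{A}^2-nF^2$ produces gradient terms with the wrong sign --- schematically the $nF^{kl}F_kF_l$-term, the $h^{ij}F^{kl,rs}h_{kl;i}h_{rs;j}$-term, and, after the spatial integration by parts, terms containing $F^{ij}{}_{;j}$ --- and these can only be absorbed while keeping a positive multiple of $\int\abs{DA}^2$ because of the strict concavity of $F$ (the source of the $\epsilon^2$-room in \eqref{ineq-f}), respectively the Kato-type inequality in the case $F=\tfrac1nH$, together with $\abs{D^mA}\leq c\abs{A}$ or the smallness of $(\abs{A}^2-nF^2)^{1/2}/\abs{A}$ furnished by Theorem \ref{decay-difference-principal-curvatures}. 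Your plan alludes to this only through ``the assumption on $F$'', and the coercivity of $F^{ij}$ that you do invoke is not where the difficulty lies. The economical repair is to quote the pointwise inequality \eqref{ineq-f} (with $\alpha=2$), which the paper proves exactly for this purpose: integrating it at fixed time gives a stronger, instantaneous version of your Step 2 and collapses your Steps 2--3 into the paper's argument.
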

\begin{proof}
Choose
\begin{equation}
f = F^{-2} \{ \abs{A}^2 - nF^2\}.
\end{equation}
From Theorem \ref{decay-difference-principal-curvatures} we infer
\begin{equation}
f \leq c_0 F^{-\delta} \leq c \Theta^{\delta} = c e^{- \delta \tau} \quad \forall \tau \geq \tau_0,
\end{equation}
and from Theorem \ref{regularity-rescaled-function} we obtain
\begin{equation}
\label{estimates-DmA}
|D^m A| \leq c |A| \quad \forall m \geq 1.
\end{equation}
Integrating inequality (\ref{ineq-f}) over $M$, using integraion by parts and using relation (\ref{estimates-DmA}), we infer
\begin{equation}
\label{integral-estimates-D-tilde-A-unrescaled}
2 \epsilon^2 \int_M |DA|^2 F^{-2} \leq c \int_{M} f.
\end{equation}
Hence (\ref{integral-estimates-D-tilde-A}) follows by rescaling (\ref{integral-estimates-D-tilde-A-unrescaled}).
\end{proof}
Using the same proof of \cite[Lemma 8.10]{Ger13} we have
\begin{lem}
There are positive constants $c$ and $\delta$ such that for all $\tau \geq \tau_0$
\begin{equation}
\tilde{F}_{\max} - \tilde{F}_{\min} \leq c e^{-\delta \tau} ,
\end{equation}
and
\begin{equation}
\label{decay-DF}
\norm{D \tilde{F}} \leq ce^{-\delta \tau}.
\end{equation}
\qed
\end{lem}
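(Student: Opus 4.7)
My plan is to derive both estimates from a single exponential $L^{\infty}$-decay of $D\tilde{A}$, obtained by interpolating the $L^{2}$-decay in (\ref{integral-estimates-D-tilde-A}) against the uniform smoothness furnished by Theorem \ref{regularity-rescaled-function}. This is the same strategy as in \cite[Lemma 8.10]{Ger13}; the hyperbolic setting enters only through the earlier preparatory estimates.

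Concretely, I would proceed as follows. First, Theorem \ref{regularity-rescaled-function} gives uniform (in $\tau$) $C^{\infty}(\mathbb{S}^{n})$-bounds on the rescaled graph function $\tilde{u}$; translating these to the rescaled second fundamental form $\tilde{A} = \Theta\, A$ yields
\begin{equation}
\norm{D^{m} \tilde{A}}_{L^{\infty}(\tilde{M}(\tau))} \leq C_{m} \quad \forall\, m \geq 0,
\end{equation}
where the covariant derivatives are taken with respect to the induced rescaled metric on $\tilde{M}(\tau)$. Combining this with (\ref{integral-estimates-D-tilde-A}), a standard Gagliardo--Nirenberg interpolation between the $L^{2}$-decay of $D\tilde{A}$ and the $L^{\infty}$-boundedness of $D^{m}\tilde{A}$ for $m \geq 2$ then produces a pointwise decay
\begin{equation}
\norm{D \tilde{A}}_{L^{\infty}(\tilde{M}(\tau))} \leq c\, e^{-\delta'\tau}
\end{equation}
with some (possibly smaller) $\delta' > 0$. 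Since $F \in C^{\infty}(\varGamma_{+})$ and by Lemma \ref{rescaled-curvature-bounded-from-above} together with Corollary \ref{rescaled-curvature-bounded-from-below} the rescaled principal curvatures $\tilde{\kappa}_{i}$ stay in a fixed compact subset of $\varGamma_{+}$, the chain rule yields
\begin{equation}
\norm{D \tilde{F}}_{L^{\infty}} \leq C\, \norm{D \tilde{A}}_{L^{\infty}} \leq c\, e^{-\delta'\tau},
\end{equation}
which is (\ref{decay-DF}). For the oscillation estimate, $\tilde{M}(\tau)$ is a closed, connected hypersurface whose intrinsic diameter in the rescaled metric is uniformly bounded in $\tau$ (this follows from the uniform $C^{0}$- and $C^{1}$-estimates on $\tilde{u}$); joining points where $\tilde{F}$ attains its maximum and minimum by a minimizing geodesic of uniformly bounded length and integrating $D\tilde{F}$ along it gives $\tilde{F}_{\max} - \tilde{F}_{\min} \leq c\, e^{-\delta'\tau}$.

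The main technical obstacle I anticipate is bookkeeping the scalings: verifying that the covariant derivatives behave as indicated under $h \mapsto \Theta\, h$, and that the $C^{\infty}$-estimates for $\tilde{u}(\tau,\cdot)$ indeed deliver the required uniform pointwise bounds on $D^{m}\tilde{A}$ for the interpolation to kick in. Once the scaling is pinned down, the negative ambient curvature $K_{N} = -1$ of $\mathbb{H}^{n+1}$ does not enter the final estimate (its contribution having been absorbed already in Lemma \ref{lem-K_N-term} and in the proof of Theorem \ref{decay-difference-principal-curvatures}), and the remainder of the argument reproduces \cite[Lemma 8.10]{Ger13} verbatim.
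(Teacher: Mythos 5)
Your proposal is correct and follows essentially the same route as the paper, which simply invokes \cite[Lemma 8.10]{Ger13}: there, too, the exponential $L^2$-decay of $D\tilde{A}$ is interpolated against the uniform $C^m$-bounds coming from the regularity of the rescaled flow to get pointwise decay of $D\tilde{A}$, hence of $D\tilde{F}$, and the oscillation bound follows by integrating along curves of uniformly bounded length. The scaling bookkeeping you flag is indeed the only thing to check, and it works out exactly as in the preceding lemma of the paper, where the bound $|D^m A|\leq c|A|$ is extracted from Theorem \ref{regularity-rescaled-function}.
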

\begin{lem}
There are positive constants $c$ and $\delta$ such that for all $\tau \geq \tau_0$
\begin{equation}
|D \tilde{u}| \leq ce^{-\delta \tau},
\end{equation}
where
\begin{equation}
|D \tilde{u}|^2 = \sigma^{ij} \tilde{u}_i \tilde{u}_j.
\end{equation}
\end{lem}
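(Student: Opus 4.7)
The plan is to pass from the exponential decay of $\|D\tilde F\|$ established in the preceding lemma to the exponential decay of $|D\tilde u|$ by exploiting the uniformly parabolic structure of the rescaled equation $\tilde u' = -\Phi + \tilde u$ from Theorem \ref{regularity-rescaled-function}, with $\Phi = \tilde F\, v\,\tanh(\Theta)/\Theta$. First I would use the graph formula of Lemma \ref{estimates-varphi}, namely $h^i_j = v^{-1}\vartheta^{-1}(\dot\vartheta\,\delta^i_j - \tilde g^{ik}\varphi_{jk})$ with $\varphi_k = \vartheta^{-1}u_k$, together with the $1$-homogeneity of $F$, to express $\tilde F = F(\Theta h^i_j)$ as a smooth concave fully nonlinear operator $\mathcal G(\tau,\tilde u,D\tilde u,D^2\tilde u)$ on $\mathbb S^n$. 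By the pinching (Lemma \ref{pinching-estimates-thm}), the rescaled curvature bounds (Lemma \ref{rescaled-curvature-bounded-from-above} and Corollary \ref{rescaled-curvature-bounded-from-below}), and the $C^\infty$ a~priori estimates of Theorem \ref{regularity-rescaled-function}, the linearization $\mathcal G^{ij}$ is uniformly equivalent to $\sigma^{ij}$ and all coefficients of $\mathcal G$ are uniformly bounded.

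Next I would apply the parabolic maximum principle to $w := |D\tilde u|^2$. Differentiating the rescaled flow equation gives
\[
w' = -2\sigma^{ij}\Phi_i\,\tilde u_j + 2w.
\]
At a spatial maximum of $w$, the identity $w_k = 2\tilde u^j\tilde u_{jk} = 0$ annihilates the $\partial_{\tilde u_k}\Phi$ contribution, and $w_{kl}\le 0$ combined with the positive definiteness of $-\partial_{\tilde u_{kl}}\Phi$ renders the top-order contribution non-positive modulo Ricci commutators on $\mathbb S^n$ of size $O(w)$. The explicit $\tilde F$-dependence of $\Phi$ produces a source $-2\,\partial_{\tilde F}\Phi\cdot\sigma^{ij}\tilde F_i\tilde u_j$, bounded by $c\,e^{-\delta\tau}\,w^{1/2}$ via Cauchy--Schwarz and the preceding lemma. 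What remains is a zeroth-order coefficient of $w$ coming from $2 - 2\partial_{\tilde u}\Phi$; linearizing $\mathcal G$ about the equilibrium $\tilde u\equiv 1$ (in the limit $\Theta\to 0$), this coefficient is strictly negative on the spherical harmonics of degree $\ell\ge 2$, while the modes $\ell = 0,1$---corresponding to perturbation of the extinction time and translation of the limiting point---are eliminated by the canonical choice of $T^*$ in Lemma \ref{spherical-flow} and by centering the polar coordinates at the actual shrinking point $x_0$. Assembling these ingredients yields, at spatial maxima of $w$, the differential inequality
\[
w'\le -\beta\, w + c\,e^{-\delta\tau}\,w^{1/2}
\]
for some $\beta>0$, and Gronwall together with Young's inequality delivers $w(\tau)\le c\,e^{-\delta'\tau}$ for some $\delta'>0$.

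The principal obstacle is the linear-stability step above: one must quantitatively separate the stable harmonics $\ell\ge 2$ from the gauge modes $\ell = 0,1$. I would adapt the argument of \cite[Lemma 8.11]{Ger13} to the present setting; the one genuine novelty is that the negative ambient sectional curvature $K_N = -1$ introduces additional terms in the evolution of the second fundamental form, and these must be absorbed into the error by invoking the exponential decays already established in Theorem \ref{decay-difference-principal-curvatures} and in equation (\ref{decay-DF}).
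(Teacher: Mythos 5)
Your setup starts out parallel to the paper—differentiate the rescaled scalar equation, evaluate at a spatial maximum of $w$, and use the exponential decay of $\norm{D\tilde F}$ from (\ref{decay-DF}) to control the curvature-gradient source—but the central step of your plan fails. The zeroth-order coefficient of $w$ in the evolution inequality at a spatial maximum is \emph{positive}, not negative: along the rescaled solution $\varPhi \approx \tilde F\,v\,\Theta^{-1}\tanh\Theta \approx \tilde u^{-1}$, so $\partial\varPhi/\partial\tilde u \approx -1$ and your coefficient $2-2\,\partial_{\tilde u}\varPhi$ is close to $+4$ (in the paper's variables $\varphi=\log\tilde u$, $w=\tfrac12|D\varphi|^2$, the coefficient is $2e^{-\varphi}\tilde F\Theta^{-1}\tanh\Theta\,v\approx+2$, uniformly bounded below by a positive constant thanks to the bounds on $\tilde u$, $\tilde F$ and $v$). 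Hence the inequality $w'\le -\beta w + c\,e^{-\delta\tau}w^{1/2}$ you want at spatial maxima cannot come out of this computation. The attempted repair—arguing that the coefficient becomes negative after discarding the spherical harmonics $\ell=0,1$ via the choice of $T^*$ and the center $x_0$—is a spectral statement about the linearized operator acting on functions; it has no meaning inside a pointwise maximum-principle inequality, and you give no mechanism for projecting out these modes pointwise. This is a genuine gap, not a technicality: without it your differential inequality has the wrong sign and Gronwall gives nothing.

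The paper (following \cite[Lemma 8.12]{Ger13}) exploits exactly the sign you tried to avoid. With $\varphi=\log\tilde u$ and $w=\tfrac12|D\varphi|^2$ one obtains, at points where $w$ attains its spatial maximum (so $w_k\varphi^k=0$, and a manifestly nonnegative term $R_2$ may be dropped),
\begin{equation*}
w'_{\max} \;\ge\; 2\lambda_0\, w_{\max} - c\,e^{-\delta\tau}\qquad\text{for a.e. } \tau\ge\tau_0,
\end{equation*}
where the forcing term is your $\tilde F$-gradient source controlled by (\ref{decay-DF}). Since $w$ is a priori bounded (Theorem \ref{regularity-rescaled-function} and the earlier gradient estimates), this \emph{unstable} differential inequality forces decay: if $w_{\max}-\tfrac{c}{\lambda_0}e^{-\delta\tau}$ were ever positive it would grow at least exponentially thereafter, contradicting boundedness; hence $w_{\max}\le \tfrac{c}{\lambda_0}e^{-\delta\tau}$. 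So you should replace your linear-stability/harmonics step by this boundedness-plus-reverse-inequality argument; the remaining ingredients of your outline (vanishing of $Dw$ at the maximum, favorable sign of the second-order term, exponential decay of the source via the preceding lemma) are consistent with the paper.
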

\begin{proof}
As in the proof of \cite[Lemma 8.12]{Ger13}, we let
\begin{equation}
\varphi = \log \tilde{u}, \quad w = \tfrac{1}{2} |D \varphi|^2,
\end{equation}
then
\begin{equation}
\label{expression-varphi}
\varphi' = - e^{- \varphi} \tilde{F} \Theta^{-1} \frac{\sinh \Theta}{\cosh \Theta} v +1.
\end{equation}
Differentiate now (\ref{expression-varphi}) with respect to $\varphi^k D_k$ we obtain
\begin{equation}
\begin{split}
w' = &2 e^{- \varphi} w \tilde{F} \Theta^{-1} \frac{\sinh \Theta}{\cosh \Theta} v
- e^{-\varphi} \tilde{F} \Theta^{-1} \frac{\sinh \Theta}{\cosh \Theta} v^{-1} \sinh^{-2} u \, u^2 w_k \varphi^k \\
 &+ R_1 + R_2,
\end{split}
\end{equation}
where
\begin{equation}
\begin{split}
R_1 &= - e^{-\varphi} \frac{\sinh \Theta}{\cosh \Theta} v F_k \varphi^k,\\
R_2 &=   e^{-\varphi} \tilde{F} \frac{\sinh \Theta}{\Theta \cosh \Theta} v^{-1} |D \varphi|^4 \sinh^{-3} u\{ u^3 \cosh u- u^2 \sinh u\} \geq 0.
\end{split}
\end{equation}
In view of (\ref{decay-DF}) $R_1$ decays exponentially. Thus the function
\begin{equation}
w_{\max} = \sup_{\tilde{M}(\tau)} w
\end{equation}
is Lipschitz and satisfies
\begin{equation}
w'_{\max} \geq 2 e^{- \varphi} w \tilde{F} \Theta^{-1} \frac{\sinh \Theta}{\cosh \Theta} v - c e^{-\delta \tau}
\end{equation}
for almost every $\tau \geq \tau_0$. Using the same argument as in \cite[Lemma 8.12]{Ger13} we conclude that
\begin{equation}
w_{\max} (\tau) \leq \tfrac{c}{\delta} e^{- \delta \tau} \quad \forall \tau \geq \tau_0.
\end{equation}
\end{proof}
The same arguments of \cite[Corollary 8.13]{Ger13} and the interpolation inequalities for the $C^m$-norms
 (cf. \cite[Corollary 6.2]{Ger11}) yield
\begin{thm}
\label{convergence-tilde-u-to-1}
Let $F$ be strictly concave or $F = \frac{1}{n} H$, then the rescaled function $\tilde{u}$ converges  in $C^{\infty}(\mathbb{S}^n)$
to the constant function $1$ exponentially fast. \qed
\end{thm}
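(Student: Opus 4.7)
The plan is to first establish $C^0$ exponential convergence of $\tilde{u}$ to the constant $1$, and then bootstrap to $C^\infty$ convergence via interpolation against the uniform smoothness bounds from Theorem \ref{regularity-rescaled-function}.

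For the $C^0$ step, the preceding lemma provides pointwise exponential decay of the spatial gradient,
\begin{equation}
|D\tilde{u}|(\tau, \cdot) \leq c e^{-\delta \tau},
\end{equation}
and integration along a great circle immediately yields the oscillation estimate
\begin{equation}
\osc_{\mathbb{S}^n} \tilde{u}(\tau, \cdot) \leq c e^{-\delta \tau}.
\end{equation}
Hence $\tilde{u}(\tau, \cdot)$ clusters exponentially fast around a single value $a(\tau)$, say the spherical mean. To identify $\lim a(\tau) = 1$ I would invoke Lemma \ref{compare-spherical-barrier}, which bounds the radial function of the flow by the spherical barrier, $\inf_{M(t)} u \leq \Theta(t,T^*) \leq \sup_{M(t)} u$. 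Dividing by $\Theta$ gives $\inf \tilde{u} \leq 1 \leq \sup \tilde{u}$ for every $\tau$, and combining this with the exponentially small oscillation forces
\begin{equation}
\|\tilde{u} - 1\|_{C^0(\mathbb{S}^n)} \leq c e^{-\delta \tau}.
\end{equation}

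For the $C^\infty$ upgrade I would use the standard interpolation inequalities for Hölder and $C^m$ norms on $\mathbb{S}^n$ as in \cite[Corollary 6.2]{Ger11}. By Theorem \ref{regularity-rescaled-function} the function $\tilde{u}$ enjoys a uniform a priori bound $\|\tilde{u}(\tau, \cdot)\|_{C^{m+1}(\mathbb{S}^n)} \leq C_m$ independent of $\tau$. Applying an interpolation inequality of the form
\begin{equation}
\|v\|_{C^m} \leq C \|v\|_{C^0}^{\theta_m} \|v\|_{C^{m+1}}^{1-\theta_m}, \qquad \theta_m = \tfrac{1}{m+1},
\end{equation}
to $v = \tilde{u} - 1$ gives $\|\tilde{u} - 1\|_{C^m(\mathbb{S}^n)} \leq C e^{-\theta_m \delta \tau}$ for every $m \in \mathbb{N}$. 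This is exactly the exponential convergence in $C^\infty(\mathbb{S}^n)$ claimed by the theorem, with an exponent that depends on $m$ but remains strictly positive.

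The only non-routine part is anchoring the $C^0$ limit to the particular constant $1$ rather than an unspecified number; once the barrier comparison pins this down, the rest is a clean interpolation argument. Neither step uses the strict concavity of $F$ (or the case $F = \tfrac{1}{n}H$) directly; these hypotheses enter only through the exponential gradient decay already established, so no further curvature-specific analysis is needed at this stage.
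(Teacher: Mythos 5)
Your proposal is correct and follows essentially the same route the paper takes (by deferring to \cite[Corollary 8.13]{Ger13} together with the interpolation inequalities of \cite[Corollary 6.2]{Ger11}): exponential decay of $|D\tilde{u}|$ gives exponential decay of the oscillation, the barrier comparison of Lemma \ref{compare-spherical-barrier} forces $\inf\tilde{u}\leq 1\leq\sup\tilde{u}$ and hence pins the $C^0$ limit at $1$, and interpolation against the uniform $C^m$ bounds of Theorem \ref{regularity-rescaled-function} upgrades this to exponential convergence in every $C^m(\mathbb{S}^n)$ norm.
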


\begin{lem}
Let $F$ be strictly concave or $F = \frac{1}{n} H$, then there exist positive constants $c$ and $\delta$ such that
\begin{equation}
|\tilde{F} (\tau, \cdot) - 1| \leq c e^{- \delta \tau} \quad \forall \tau \geq \tau_0.
\end{equation}
\end{lem}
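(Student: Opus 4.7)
The plan is to evaluate $\tilde F$ at the points where $u$ attains its maximum and minimum on $M(t)$, exploit the graph formula for the second fundamental form in geodesic polar coordinates, and then combine the resulting two-sided bounds with the decay of $\tilde F_{\max} - \tilde F_{\min}$ already established.

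First, at the point $p_{\max} \in M(t)$ where $u$ attains its maximum we have $u_i = 0$ and $D^2 u \leq 0$. Using the graph representation in geodesic polar coordinates and the fact that the slice $\{x^0 = u\}$ has second fundamental form $\bar{h}_{ij} = \sinh u \cosh u\, \sigma_{ij}$, one obtains (with $v = 1$) $h_{ij} \geq \bar{h}_{ij}$ as symmetric tensors and $g_{ij} = \bar{g}_{ij}$; hence all principal curvatures satisfy $\kappa_i \geq \coth u_{\max}$. Monotonicity, $1$-homogeneity, and the normalization $F(1,\ldots,1)=1$ then give $F(\kappa_i) \geq F(\coth u_{\max},\ldots,\coth u_{\max}) = \coth u_{\max}$, so
\begin{equation}
\tilde F(p_{\max}) \geq \Theta \coth u_{\max}.
\end{equation}
The symmetric argument at the minimum point $p_{\min}$ of $u$ yields $\tilde F(p_{\min}) \leq \Theta \coth u_{\min}$.

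Next I would show that both quantities $\Theta \coth u_{\max}$ and $\Theta \coth u_{\min}$ are exponentially close to $1$. By \rt{convergence-tilde-u-to-1}, $\tilde u \to 1$ in $C^\infty(\mathbb{S}^n)$ exponentially, so $u_{\max} = \Theta(1 + O(e^{-\delta\tau}))$ and likewise for $u_{\min}$. Using the Taylor expansion $s\coth s = 1 + \tfrac{1}{3}s^2 + O(s^4)$ for $s\to 0$ and the mean value estimate $|\coth u - \coth\Theta| \leq \sup_{\text{between}} \mathrm{csch}^2(\cdot) \cdot |u - \Theta|$, together with the fact $\mathrm{csch}^2 s \sim s^{-2}$ as $s\to 0$, one obtains
\begin{equation}
|\Theta\coth u_{\max} - 1| + |\Theta\coth u_{\min} - 1| \leq c e^{-\delta\tau}.
\end{equation}

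Finally, combining with the previous lemma $\tilde F_{\max} - \tilde F_{\min} \leq ce^{-\delta\tau}$ yields
\begin{equation}
\tilde F_{\min} \geq \tilde F(p_{\max}) - ce^{-\delta\tau} \geq 1 - ce^{-\delta\tau},\qquad \tilde F_{\max} \leq \tilde F(p_{\min}) + ce^{-\delta\tau} \leq 1 + ce^{-\delta\tau},
\end{equation}
which is precisely the claim. The main delicacy in this plan is step three: one must control $\Theta \coth u$ as $\Theta \to 0$, where the factor $\mathrm{csch}^2(\Theta)$ blows up like $\Theta^{-2}$, and verify that the exponential smallness of $\tilde u - 1$ beats this blow-up (which it does, since $|u - \Theta| \leq c\Theta e^{-\delta\tau}$ carries the extra factor of $\Theta$ needed to cancel). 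Apart from this careful bookkeeping, each ingredient is already available from the preceding lemmas.
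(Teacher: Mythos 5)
Your argument is correct, but it is not the route the paper takes: the paper offers no self-contained proof at this point, it merely records the hyperbolic Taylor-type estimate $\lvert\tanh\Theta-\Theta\rvert\le c\Theta^{2}$ and then declares the remainder identical to the spherical argument of \cite[Lemma 8.16]{Ger13}, which is carried out on the rescaled scalar flow equation. You instead pin the limiting value $1$ down directly by a barrier comparison at the spatial extrema of $u$ --- the same device the paper already uses in Corollary \ref{rescaled-curvature-bounded-from-below} --- giving $\tilde F(p_{\max})\ge\Theta\coth u_{\max}$ and, by the mirror-image inequality $h_{ij}\le\bar h_{ij}$ at an interior minimum, $\tilde F(p_{\min})\le\Theta\coth u_{\min}$; Theorem \ref{convergence-tilde-u-to-1} then makes both bounds equal to $1+O(e^{-\delta\tau})$, and the oscillation estimate $\tilde F_{\max}-\tilde F_{\min}\le ce^{-\delta\tau}$ of the preceding lemma spreads this to every point of $M(t)$. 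Your bookkeeping at the delicate step is right: $\lvert u_{\max}-\Theta\rvert\le c\,\Theta e^{-\delta\tau}$ supplies exactly the factor of $\Theta$ needed against $\sup\operatorname{csch}^{2}\sim\Theta^{-2}$ once you multiply by $\Theta$, and $\Theta\coth\Theta-1=O(\Theta^{2})=O(e^{-2\tau})$ plays precisely the role of the paper's $\tanh$-estimate (recall $\Theta=e^{-\tau}$). All three ingredients are established in the paper with the same hypotheses (strict concavity of $F$ or $F=\tfrac1nH$ enters only through Theorem \ref{convergence-tilde-u-to-1} and the preceding lemma), and the only cosmetic point left implicit is that on an initial compact $\tau$-range, before the asymptotic estimates apply, one simply enlarges $c$ using the uniform two-sided bounds on $\tilde F$ from Lemma \ref{rescaled-curvature-bounded-from-above} and Corollary \ref{rescaled-curvature-bounded-from-below}. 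What your route buys is a proof independent of the internal details of \cite[Lemma 8.16]{Ger13}; what the paper's route buys is brevity and strict parallelism with the spherical case.
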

\begin{proof}
Observe that for $\tau_1$ sufficiently large we have
\begin{equation}
\left| \frac{\sinh \Theta}{\cosh \Theta} - \Theta \right| \leq c \Theta^2 \quad \forall \tau \geq \tau_1.
\end{equation}
The rest of the proof is identical to \cite[Lemma 8.16]{Ger13}.
\end{proof}
\section{Inverse curvature flows}
\label{section:inverse-curvature-flows}
Let $M(t)$ be the flow hypersurfaces of the direct flow in $\mathbb{H}^{n+1}$ and write $M(t)$ as graphs $M(t) = \textrm{graph} \, u(t, \cdot).$
with respect to the geodesic polar coordinates centered in the point where the direct flow shrinks to.
By applying an isometry we may assume that the point $x_0$ is the Beltrami point.
The polar hypersurfaces $M(t)^*$ are the flow hypersurfaces of the corresponding inverse curvature flow in the de Sitter space. Write
$M(t)^* = \textrm{graph} \, u^*(t, \cdot)$ over $\mathbb{S}^n$.
\begin{lem}
The functions $u, u^*$ satisfy the relations
\begin{equation}
\label{relation-u-u*-1}
u_{\max} = - u^*_{\min} \quad \forall t \in [t_\delta, T^*),
\end{equation}
\begin{equation}
\label{relation-u-u*-2}
u_{\min} = - u^*_{\max} \quad \forall t \in [t_\delta, T^*).
\end{equation}
\end{lem}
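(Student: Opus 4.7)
The plan is to compute $u^*$ at the Gauss image of the extrema of $u$ by a direct Minkowski calculation, and then to invoke the duality $(M(t)^*)^* = M(t)$ from Lemma~\ref{duality-result} to obtain the matching inequalities in the opposite direction.

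Let $\xi_0\in\mathbb{S}^n$ attain $u_{\max}$. Then $u_j(\xi_0)=0$ and $M(t)$ is tangent from the inside to the geodesic sphere $S_{u_{\max}}$ at this point. In the Minkowski embedding of Section~\ref{section:general-facts} the point on $M(t)$ reads $x(\xi_0)=(\cosh u_{\max},\sinh u_{\max}\,\xi_0)$, the tangent vectors collapse to $x_j(\xi_0)=(0,\sinh u_{\max}\,\xi_j)$, and a short calculation identifies the unique outward unit normal (as a vector in $\mathbb{R}^{n+1,1}$ tangent to $\mathbb{H}^{n+1}$) as
\begin{equation*}
\tilde{x}(\xi_0) = (\sinh u_{\max},\cosh u_{\max}\,\xi_0),
\end{equation*}
which is precisely the outward normal to $S_{u_{\max}}$ at the same point. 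By Lemma~\ref{duality-result} this vector represents a point of $M(t)^*$, and with the eigentime convention $\tau = -\mathrm{arcsinh}\,x^0$ adopted there, the eigentime of this point is $-u_{\max}$ while the angular coordinate is $\xi_0$. Hence $u^*(\xi_0) = -u_{\max}$, giving $u^*_{\min}\le -u_{\max}$. The analogous computation at a minimizer $\xi_1$ of $u$, where $M(t)$ is tangent to $S_{u_{\min}}$ from the outside but the outward normals still coincide, yields $u^*(\xi_1) = -u_{\min}$ and thus $u^*_{\max}\ge -u_{\min}$.

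For the reverse inequalities I would run the identical argument with $M(t)$ and $M(t)^*$ interchanged, which is legitimate by $(M(t)^*)^*=M(t)$. At an extremum $\eta$ of $u^*$ we have $u^*_j(\eta)=0$, so $M(t)^*$ is tangent to the coordinate slice $\{\tau=u^*(\eta)\}\subset N$; a dual Minkowski computation shows that the polar in $\mathbb{H}^{n+1}$ of the slice $\{\tau=\tau_0\}$ is the geodesic sphere of radius $-\tau_0$ centered at the Beltrami point, and the polar map preserves the angular coordinate at a critical point. Consequently $u(\eta) = -u^*(\eta)$ at every extremum of $u^*$, giving $u_{\min}\le -u^*_{\max}$ and $u_{\max}\ge -u^*_{\min}$. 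Combined with the inequalities from the previous paragraph these yield the two identities \eqref{relation-u-u*-1} and \eqref{relation-u-u*-2}.

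The only real obstacle is sign bookkeeping: the convention $\tau = -\mathrm{arcsinh}\,x^0$, introduced when the light cone in $N$ was switched in the proof of Lemma~\ref{duality-result}, together with the matching choice of past-directed normal on $M(t)^*$, must be verified to be consistent with the outward normal on $M(t)\subset\mathbb{H}^{n+1}$ in the sense that the Gauss image of the point over $u_{\max}$ lands on $\{\tau = -u_{\max}\}$ and not $\{\tau = +u_{\max}\}$. This is settled directly by the explicit Minkowski formula for $\tilde{x}(\xi_0)$ above, after which both identities are immediate.
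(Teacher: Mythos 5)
Your forward step is sound and agrees with a direct check: at a critical point of $u$ the Minkowski normal is $(\sinh u,\cosh u\,\xi_0)$, its Gauss image is the graph point of $M(t)^*$ over $\xi_0$, and in the switched convention its eigentime is $-u$; this gives $u^*_{\min}\le -u_{\max}$ and $u^*_{\max}\ge -u_{\min}$. The gap is in the reverse step. The two facts you invoke there --- that the polar of the slice $\{\tau=\tau_0\}$ is the geodesic sphere of radius $-\tau_0$ centered at the Beltrami point, and that the polar map preserves the angular coordinate at a critical point of $u^*$ --- presuppose $\tau_0=u^*(\eta)<0$. Indeed, the dual of the graph point of $M(t)^*$ over $\eta$ with $u^*_j(\eta)=0$ is $x=\bigl(\cosh u^*(\eta),\,-\sinh u^*(\eta)\,\eta\bigr)$: this lies over $\eta$ with radial value $-u^*(\eta)$ only if $u^*(\eta)<0$; if $u^*(\eta)>0$ it lies over the antipode $-\eta$ (and if $u^*(\eta)=0$ it is the Beltrami point itself), in which case the inequality $u_{\min}\le -u^*_{\max}$ does not follow. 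Your forward computation only controls $u^*$ at the images of the critical points of $u$, hence only shows $u^*_{\min}<0$; it says nothing about the sign of $u^*$ at its own maximum, so your closing remark that the explicit formula for $\tilde x(\xi_0)$ settles all sign questions is not correct for this step.

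The missing ingredient is $M(t)^*\subset\{\tau<0\}$, i.e.\ $u^*<0$ everywhere, and it is easy to supply: at a general graph point the time component of the outward normal of $M(t)$ is $\nu^0=\tilde v\,\sinh u>0$ with $\tilde v$ a positive gradient factor (equivalently $\ska{\nu,p}<0$, because the center $p$ of the polar coordinates lies in the interior of the convex body $\hat{M}(t)$), so after switching the light cone every point of $M(t)^*$ has negative eigentime. This is in effect how the paper argues: its proof works with the global relation $u^*=-\mathrm{arcsinh}(\tilde v\sinh u)$, which simultaneously gives $u^*<0$ and, via Lemma \ref{chi-i-u-i} applied at an extremum of $u^*$ (where the gradient factor equals $1$), yields \eqref{relation-u-u*-1} and \eqref{relation-u-u*-2} without a separate bidual step. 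Your pointwise critical-point argument on both sides plus biduality is a legitimate alternative route, and once you insert the sign statement above it closes as intended; without it, the second half of the argument is incomplete.
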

\begin{proof}
We use the relation \cite[(10.4.65)]{Ger06}
\begin{equation}
\tilde{x}^0 = \frac{r}{\sqrt{1-r^2}},
\end{equation}
and note that by comparing \cite[(10.2.5)]{Ger06} and the metric in the eigentime coordinate system in $N$  (\ref{eigen-time-coordinates})
we infer that
\begin{equation}
\cosh^2 u^* = 1+ \abs{\tilde{x}^0}^2.
\end{equation}
From (\ref{relation-rho-r}) we infer that
\begin{equation}
r = \tanh u.
\end{equation}
Since we have switched the light cone such that the uniformly convex slices are contained in $\{ \tau <0 \}$, we deduce that
\begin{equation}
u^* = - \mathrm{arcsinh} (\tilde{v} \sinh u) = -\mathrm{arcsinh}  \tilde{\chi}.
\end{equation}
In a point where $u^*$ attains its minimum, there holds $v=1$ in view of Lemma \ref{chi-i-u-i}. Thus $u = -u^*$ and $u$ attains its maximum in
such a point. This proves (\ref{relation-u-u*-1}). The proof of (\ref{relation-u-u*-2}) is similar.
\end{proof}
\begin{cor}
There exists a positive constant $c$ such that
\begin{equation}
- c \leq w \equiv  u^* \Theta^{-1} \leq -c^{-1} \quad \forall t \in [ t_\delta, T^*).
\end{equation}
\qed
\end{cor}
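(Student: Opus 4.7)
The plan is to derive the bounds on $w = u^*\Theta^{-1}$ by combining the pointwise identities $u_{\max} = -u^*_{\min}$ and $u_{\min} = -u^*_{\max}$ from the preceding lemma with the $C^0$-control on $u$ already established in the rescaled flow analysis.

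First I would recall that the earlier $C^0$-estimates on the rescaled function $\tilde u = u\Theta^{-1}$ (see the statements leading up to Theorem \ref{regularity-rescaled-function}, together with Lemma \ref{compare-spherical-barrier} and Lemma \ref{shrink-the-convex-body}) provide a uniform constant $c > 0$, independent of $t \in [t_\delta, T^*)$, such that
\begin{equation*}
c^{-1} \;\leq\; \frac{u_{\min}(t)}{\Theta(t,T^*)} \;\leq\; \frac{u_{\max}(t)}{\Theta(t,T^*)} \;\leq\; c.
\end{equation*}
This is precisely the content of the uniform two-sided bound on $\tilde u$ derived when showing that $\tilde u$ is bounded in $Q(\tau_\delta,\infty)$.

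Next I would simply transfer these inequalities to $u^*$ using the previous lemma: since $u^*_{\max} = -u_{\min}$ and $u^*_{\min} = -u_{\max}$, dividing by $\Theta > 0$ yields
\begin{equation*}
-c \;\leq\; -\frac{u_{\max}}{\Theta} \;=\; \frac{u^*_{\min}}{\Theta} \;\leq\; w(t,\cdot) \;\leq\; \frac{u^*_{\max}}{\Theta} \;=\; -\frac{u_{\min}}{\Theta} \;\leq\; -c^{-1},
\end{equation*}
for every $t \in [t_\delta, T^*)$, which is exactly the claimed estimate.

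There is no real obstacle here; the work has already been done in the preceding lemma (which gives the dual pointwise identities relating extrema of $u$ and $u^*$) and in the rescaled flow section (which gives the uniform two-sided control of $u/\Theta$). The only thing to verify carefully is that the constant $c$ coming from the rescaled $C^0$-estimate on $\tilde u$ is indeed uniform on the whole interval $[t_\delta, T^*)$ and not merely on compact subintervals; this follows from the uniform bound on $\tilde u$ in $Q(\tau_\delta,\infty)$ noted just before Theorem \ref{regularity-rescaled-function}.
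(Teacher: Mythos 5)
Your proposal is correct and is exactly the argument the paper intends: the corollary is stated with no written proof because it follows immediately from the identities $u_{\max}=-u^*_{\min}$, $u_{\min}=-u^*_{\max}$ of the preceding lemma combined with the uniform two-sided bound on $\tilde u = u\Theta^{-1}$ in $Q(\tau_\delta,\infty)$ established in the rescaled-flow section. Your caveat about uniformity of the constant on all of $[t_\delta,T^*)$ is also the right point to check, and it is indeed covered by the lemma preceding Theorem \ref{regularity-rescaled-function}.
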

Define $\vartheta (u) = \cosh (u)$ and $\bar{g}_{ij} = \vartheta^2 \sigma_{ij}$.
We prove in the following that $w$ is uniformly bounded in $C^{\infty}(\mathbb{S}^n)$.
For simplicity, we write in the following $u$ instead $u^*$ for the graphs of the flow hypersurfaces in the de Sitter space.
The proof of $C^1$-estimates of $w$ is similar to \cite[Theorem 2.7.11]{Ger06}.
\begin{lem}
There exists a positive constant $c$ such that
\begin{equation}
\abs{D w}^2  \equiv \sigma^{ij} w_i w_j \leq c \quad \forall t \in [t_\delta, T^*).
\end{equation}
\end{lem}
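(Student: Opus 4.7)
The plan is to mimic the $C^1$-estimate for spacelike graphs in Lorentzian ambient spaces (Gerhardt, \cite[Theorem 2.7.11]{Ger06}), but carried out in the rescaled variables. Since $M(t)^{*}$ is a smooth spacelike graph in $N$ with the eigentime metric $-d\tau^{2}+\cosh^{2}\tau\,\sigma_{ij}d\xi^{i}d\xi^{j}$, we automatically have
\begin{equation*}
\tilde v^{-2}=1-\cosh^{-2}u^{*}\,\sigma^{ij}u^{*}_{i}u^{*}_{j}>0,
\end{equation*}
and therefore
\begin{equation*}
\sigma^{ij}w_{i}w_{j}=\Theta^{-2}\cosh^{2}(u^{*})\,(1-\tilde v^{-2}).
\end{equation*}
Because $u^{*}=\Theta w$ and $w$ is already bounded by the preceding corollary, $\cosh^{2}u^{*}\to 1$ and the claim is equivalent to the decay estimate $1-\tilde v^{-2}=O(\Theta^{2})$; equivalently, $\tilde v\to 1$ at rate $\Theta^{2}$.

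First I would write down the scalar flow equation for $u^{*}$ coming from $\dot{\tilde x}=-\tilde F^{-1}\tilde\nu$ in the eigentime coordinates on $N_{-}$; this is the analogue, in the dual picture, of the scalar equation derived in the proof of \rl{duality-result}. Passing to the rescaled time $\tau=-\log\Theta$ and to $w=u^{*}\Theta^{-1}$, one obtains an equation of the form
\begin{equation*}
w'=-\tilde F^{-1}\Theta^{-1}\tilde v+w+\text{(curvature–rescaling terms)},
\end{equation*}
in which, by the duality $\tilde\kappa_{i}=\kappa_{i}^{-1}$ together with \rl{rescaled-curvature-bounded-from-above} and \rc{rescaled-curvature-bounded-from-below}, the coefficient $\tilde F^{-1}\Theta^{-1}$ is uniformly bounded above and below. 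I would then differentiate and combine with the standard evolution equation for $\tilde v$ on spacelike hypersurfaces in $N$ (obtained from the Gauss/Weingarten formulae in \cite[Section 2.4]{Ger06}) to derive a uniformly parabolic inequality for either $\tilde v$ or directly for the scalar quantity $\varphi=\tfrac12\sigma^{ij}w_{i}w_{j}$. The shape of this inequality will be
\begin{equation*}
\varphi'-a^{ij}\varphi_{;ij}\le -\lambda\,\varphi+b^{i}\varphi_{i}+c_{0},
\end{equation*}
with $a^{ij}$ uniformly elliptic on $\mathbb{S}^{n}$ (the pinching of $\kappa_{i}\Theta$ supplies this) and $\lambda,c_{0}$ controlled by the $C^{0}$-bound on $w$.

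At the last step I would apply the parabolic maximum principle to an auxiliary function of the form $\tilde v\,e^{\mu w}$ (or equivalently $\varphi\,e^{\mu w}$) with $\mu>0$ chosen so that the $w$-dependent contribution in the linear term combines with the good sign produced by the de Sitter metric $\bar R_{\alpha\beta\gamma\delta}$ to dominate the forcing $c_{0}$. Since $w$ is already sandwiched between two negative constants, $e^{\mu w}$ is bounded away from $0$ and $\infty$, so a bound for the auxiliary quantity translates into the desired bound for $\varphi$.

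The main obstacle is bookkeeping the $\Theta$-factors: the evolution of $\tilde v$ in the original time $t$ contains a term proportional to $\tilde F^{-2}\tilde h^{k}_{i}\tilde h_{kj}\cdots$, which after the substitution $\tau=-\log\Theta$ produces factors of $\Theta$ that must be shown to be tame in the rescaled picture. This is precisely where the two-sided rescaled curvature bounds $\tilde\kappa_{i}\Theta^{-1}\simeq 1$ from the dual of \rl{rescaled-curvature-bounded-from-above} and \rc{rescaled-curvature-bounded-from-below} are indispensable. Once these are in place, the inequality for $\varphi$ becomes uniformly parabolic with bounded coefficients on the cylinder $[\tau_{\delta},\infty)\times\mathbb{S}^{n}$, and the maximum principle yields the desired uniform bound on $|Dw|^{2}$.
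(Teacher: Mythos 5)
Your reduction $\sigma^{ij}w_iw_j=\Theta^{-2}\cosh^2(u^*)\,\bigl(1-\tilde v^{-2}\bigr)$ is correct, but from that point on the argument is a programme rather than a proof, and the quantitative claims it hinges on are not established. First, the rescaled scalar equation: since $dt/d\tau=\Theta\tanh\Theta$, the flow term in the equation for $w$ carries a factor $\tanh\Theta\approx\Theta$, i.e.\ it reads (up to sign) $\tilde F^{-1}\tanh\Theta\,\tilde v+w$; the coefficient you display, $\tilde F^{-1}\Theta^{-1}\sim F\,\Theta^{-1}\sim\Theta^{-2}$, is \emph{not} bounded, so the $\Theta$-bookkeeping you yourself flag as ``the main obstacle'' is already off in the one equation you write down. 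Second, and more seriously, the asserted parabolic inequality $\varphi'-a^{ij}\varphi_{;ij}\le-\lambda\varphi+b^i\varphi_i+c_0$ with a uniform $\lambda>0$ is exactly the content of the lemma and is nowhere derived: differentiating the rescaling term $+w$ in the $w$-equation produces a contribution of the form $+2\varphi$ with the \emph{wrong} sign, and you never identify which term of the (never written) evolution equation for $\tilde v$ supplies a uniformly negative zeroth-order coefficient that dominates it on the infinite cylinder $[\tau_\delta,\infty)\times\mathbb{S}^n$; ``the good sign produced by the de Sitter metric'' is asserted, not computed. Without $\lambda>0$ the maximum principle gives at best linear growth in $\tau$, so as it stands there is a genuine gap.

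For comparison, the paper needs no evolution equation here: at each fixed time it maximizes the elliptic test function $G=\tfrac12\log\bigl(\norm{Du}^2\Theta^{-2}\bigr)+\lambda u\,\Theta^{-1}$ over $\mathbb{S}^n$, combines the first-order condition with the graph identity $h_{ij}v^{-1}=-u_{ij}-\dot\vartheta\vartheta\sigma_{ij}$, and uses two inputs your plan never invokes: the convexity $h_{ij}>0$ of the dual hypersurfaces (coming from the duality with the hyperbolic flow) and the qualitative gradient bound of \cite[Theorem 2.7.11]{Ger06}; choosing $\lambda$ sufficiently negative then absorbs the $\dot\vartheta\vartheta$-term, which is controlled precisely because $\dot\vartheta\,\Theta^{-1}\le c$. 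Convexity is the decisive sign input there. If you want to salvage the parabolic route, you must actually derive the evolution equation of $\tilde v$ (or of $\varphi$) under the dual flow, carry out the rescaling with the correct $\tanh\Theta$ factors, and exhibit explicitly the term that plays the role $h_{ij}u^iu^j>0$ plays in the paper's argument.
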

\begin{proof}
Since
\begin{equation}
\norm{Du}^2 \equiv g^{ij} u_i u_j = v^{-2} \bar{g}^{ij} u_i u_j \equiv v^{-2} \abs{Du}^2,
\end{equation}
we first estimate $\norm{Du} \Theta^{-1}$.
Let $\lambda$ be a real parameter to be specified later and define
\begin{equation}
G = \tfrac{1}{2} \log (\norm{Du}^2 \Theta^{-2}) + \lambda u \Theta^{-1}.
\end{equation}
There is $x_0 \in \mathbb{S}^n$ such that
\begin{equation}
G(x_0) = \sup_{\mathbb{S}^n} G,
\end{equation}
and thus in $x_0$
\begin{equation}
0 = G_i = \norm{Du}^{-2} u_{ij} u^j + \lambda u_i \Theta^{-1},
\end{equation}
where the covariant derivatives are taken with respect to $g_{ij}$ and
\begin{equation}
u^i = g^{ij} u_j = v^{-2} \bar{g}^{ij} u_j.
\end{equation}
Since
\begin{equation}
h_{ij} v^{-1} = - u_{ij} - \dot\vartheta \vartheta \sigma_{ij},
\end{equation}
we infer that
\begin{equation}
\begin{split}
\lambda \norm{Du}^{-4} \Theta^{-4} &= - u_{ij} u^i u^j \Theta^{-3}\\
& = v^{-1} h_{ij} u^i u^j \Theta^{-3} + \dot\vartheta \vartheta \sigma_{ij} u^i u^j \Theta^{-3}.\\
\end{split}
\end{equation}
By considering the dual flow in the hyperbolic space, we conclude that $h_{ij}>0$. Furthermore,
\begin{equation}
\begin{split}
\dot\vartheta \vartheta \sigma_{ij} u^i u^j \Theta^{-3} &= (\dot\vartheta \Theta^{-1}) \vartheta^{-1} v^{-2} \norm{Du}^2 \Theta^{-2}.
\end{split}
\end{equation}
By applying \cite[Theorem 2.7.11]{Ger06} directly, we conclude that $v^{-2}$ is uniformly bounded. Note $\dot\vartheta \Theta^{-1} \leq c$.
Let $c_0$ be an upper bound for $(\dot\vartheta \Theta^{-1}) \vartheta^{-1} v^{-2}$ and by choosing $\lambda < -c_0$ we conclude that
$\norm{Du} \Theta^{-1}$ can not be too large in $x_0$. Thus $\norm{Du} \Theta^{-1}$ is uniformly bounded from above. We conclude that
\begin{equation}
\sigma^{ij} w_i w_j = \norm{Du}^2 \Theta^{-2} \theta^2 v^2
\end{equation}
is uniformly bounded.
\end{proof}
\begin{lem}
There exists a positive constant $c$ such that for all $m \geq 2$
\begin{equation}
|D^m w|^2  \leq c \quad \forall t \in [t_\delta, T^*).
\end{equation}
\end{lem}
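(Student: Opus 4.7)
The plan is to transfer the $C^\infty$ regularity already established for the rescaled hyperbolic flow (Theorem \ref{regularity-rescaled-function}) to the dual flow in de Sitter space via the Gauss map, and then bootstrap by parabolic regularity applied directly to the scalar equation for $w$.

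First I would obtain uniform $C^2$ bounds on $w$. By Lemma \ref{duality-result} the principal curvatures of $\tilde M(t)$ and $M(t)$ are related by $\tilde\kappa_i = \kappa_i^{-1}$, so Lemma \ref{rescaled-curvature-bounded-from-above} and Corollary \ref{rescaled-curvature-bounded-from-below} immediately give uniform upper and lower bounds on the rescaled curvatures $\tilde\kappa_i \Theta^{-1}$ of $\tilde M(t)$. Writing $\tilde M(t) = \graph u^*$ in the eigentime coordinates \re{eigen-time-coordinates}, the second fundamental form has the schematic form
\begin{equation}
\tilde h_{ij} \tilde v^{-1} = -u^*_{ij} + \dot\vartheta\, \vartheta\, \sigma_{ij},
\end{equation}
with $\vartheta = \cosh u^*$, so the already-established $C^0$ and $C^1$ bounds on $w$ combined with the bound on $\tilde\kappa_i\Theta^{-1}$ control $D^2 w$ uniformly.

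Next I would derive the scalar evolution equation for $w$. Exactly as in the hyperbolic case leading to Theorem \ref{regularity-rescaled-function}, a direct computation using the flow equation $\dot{\tilde x} = -\tilde F^{-1} \tilde\nu$ and the rescaling $w = u^* \Theta^{-1}$, $\tau = -\log\Theta$, yields a scalar equation of the form
\begin{equation}
w' = \tilde\varPhi(x,\tau,w, w e^{-\tau}, Dw, D^2 w) + w,
\end{equation}
whose linearization in $-w_{ij}$ is, up to a positive factor, the dual curvature operator $(\tilde F^{-1})^{ij}$. The pinching $\tilde\kappa_i\Theta^{-1} \asymp 1$ together with the $C^1$ bound on $w$ makes this equation uniformly parabolic on $[\tau_\delta,\infty)\times \mathbb S^n$, with coefficients uniformly bounded in $C^\alpha$ thanks to the $C^2$ bound already established.

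Finally, I would bootstrap. The operator $\tilde F^{-1}$ inherits concavity in $(\tilde\kappa_i^{-1})$ from the concavity of $F$ (so, as a function of the Hessian entries, the principal part is concave after the usual reformulation). This is exactly the setup under which the Krylov–Safonov theorem applies and yields $C^{2,\alpha}$ estimates on $w$. Schauder estimates for uniformly parabolic equations then give $C^{m,\alpha}$ bounds for every $m$, which is the claim. The main potential obstacle is ensuring the concavity/convexity structure needed to invoke Krylov; however, this is already implicit in Assumption \ref{general-assmption} and was used in the dual argument in \cite[Theorem 6.6]{Ger13}, so the same reasoning transfers without change.
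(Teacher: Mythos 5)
Your first step (the uniform $C^2$ bound on $w$ from the $C^0$, $C^1$ estimates together with the two-sided bounds on the rescaled curvatures transferred by $\tilde\kappa_i=\kappa_i^{-1}$) is fine and agrees with the paper. The gap is in your bootstrap. For the fully nonlinear parabolic equation satisfied by $w$ on the de Sitter side, the Krylov--Safonov/Evans--Krylov $C^{2,\alpha}$ step requires the operator to be concave (or convex) \emph{as a function of $D^2w$}, i.e.\ of the second fundamental form $\tilde h_{ij}$ of $\tilde M(t)$. Your justification -- ``$\tilde F^{-1}$ inherits concavity in $(\tilde\kappa_i^{-1})$ from the concavity of $F$'' -- is concavity in the wrong variables: concavity of $F$ in $\kappa$ says nothing about concavity or convexity of the speed $\tilde F^{-1}$ in $\tilde\kappa$ (equivalently in $\tilde h_{ij}$). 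Under Assumption \ref{general-assmption}~(1), where $\tilde F$ is concave, one indeed gets that $-\tilde F^{-1}$ is concave and your argument could be pushed through; but under Assumption \ref{general-assmption}~(2) the hypothesis is only that $\tilde F$ is \emph{convex}, and then $\tilde F^{-1}$ need be neither concave nor convex. For instance $\tilde F=\abs{A}$ is admissible there, and $1/\abs{A}$ is neither concave nor convex on the positive cone, so the ``setup under which Krylov--Safonov applies'' is not available and the $C^{2,\alpha}$, hence the $C^m$, bootstrap on the de Sitter scalar equation breaks down. So the claim that the needed structure ``is already implicit in Assumption \ref{general-assmption}'' is not justified.

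The paper avoids this entirely: it never invokes fully nonlinear regularity theory on the de Sitter side. Since Theorem \ref{regularity-rescaled-function} gives uniform $C^\infty(\mathbb S^n)$ bounds for $h_i^j\Theta$ on the hyperbolic side, and $c_1\delta_i^j\le h_i^j\Theta\le c_2\delta_i^j$, the dual relation $\tilde h_i^j=(h_i^j)^{-1}$ shows that $\tilde h_i^j\Theta^{-1}$ is uniformly bounded in $C^\infty(\mathbb S^n)$ as the inverse matrix. With the curvature thus already smooth and controlled, the graph relation $u^*_{:ij}=-v\tilde h_{ij}+2\vartheta^{-1}\dot\vartheta u^*_iu^*_j-\vartheta\dot\vartheta\sigma_{ij}$ gives the $C^2$ bound directly, and contracting with $g^{ij}$ yields a uniformly elliptic (quasilinear, time being only a parameter) equation for $w$ with bounded, already-smooth data, to which elliptic Schauder bootstrapping applies to give all $C^m$ bounds. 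If you want to salvage your route, you should either restrict to Assumption~(1) and argue concavity of $-\tilde F^{-1}$ in $\tilde h_{ij}$ honestly, or, better, replace the parabolic Krylov/Schauder step by this transfer-of-curvature-regularity argument.
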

\begin{proof}
Let $(\hat{h}^{ij}) = (h_{ij})^{-1}$ be the inverse of the second fundamental form in $\mathbb{H}^{n+1}$ and $\tilde{h}_{ij}$ the second fundamental
form in $N$. We consider the mixed tensor
\begin{equation}
\hat{h}_i^j = g_{ik} \hat{h}^{kj}, \quad \tilde{h}_i^j = \tilde{g}^{kj} \tilde{h}_{ki},
\end{equation}
where $g_{ij}$ and $\tilde{g}_{ij} = h_i^k h_{kj}$ are the metrics of hypersurfaces in $\mathbb{H}^{n+1}$ resp. $N$.
From the relation
\begin{equation}
\tilde{\kappa}_i = \kappa_i^{-1},
\end{equation}
we infer that
\begin{equation}
\tilde{h}_i^j  = \hat{h}_i^j.
\end{equation}
From Theorem \ref{regularity-rescaled-function} we infer that $h_i^j \Theta$ are uniformly bounded in $C^{\infty} (\mathbb{S}^n)$ and due to
Lemma \ref{rescaled-curvature-bounded-from-above} and Corollary \ref{rescaled-curvature-bounded-from-below} there are constants $c_1, c_2>0$ such that
\begin{equation}
0 < c_1 \delta_i^j \leq h_i^j \Theta \leq c_2 \delta_i^j,
\end{equation}
and thus $\tilde{h}_i^j \Theta^{-1} = \hat{h}_i^j \Theta^{-1}$, as the inverse of $h_i^j \Theta$, are uniformly bounded in $C^{\infty} (\mathbb{S}^n)$.
We switch now our notation by considering the quantities in $N$ without writing a tilde.
Denote the covariant derivatives with respect to $\bar{g}_{ij}$ resp. $\sigma_{ij}$ by a semiconlon resp. a colon.
In view of \cite[Remark 1.6.1, Lemma 2.7.6]{Ger06} we have
\begin{equation}
\begin{split}
v^{-1} h_{ij} &= -v^{-2} u_{;ij} - \dot\vartheta \vartheta \sigma_{ij}\\
& = -v^{-2} \{ u_{:ij} - \tfrac{1}{2} \bar{g}^{km} \left( (\vartheta^2)_j \sigma_{mi} + (\vartheta^2)_i \sigma_{mj} -
(\vartheta^2)_m \sigma_{ij} \right) u_k \}
- \dot\vartheta \vartheta \sigma_{ij}.\\
\end{split}
\end{equation}
Therefore,
\begin{equation}
\label{relation-u_ij-h_ij}
u_{:ij} = -v h_{ij} + 2 \vartheta^{-1} \dot\vartheta u_i u_j - \vartheta \dot\vartheta \sigma_{ij}.
\end{equation}
By considering the dual flow in hyperbolic space, we infer that
\begin{equation}
\abs{A} \Theta^{-1} \leq c,
\end{equation}
and note that
\begin{equation}
\bar{g}^{ij} \leq \bar{g}^{ij} + v^{-2} \check{u}^i \check{u}^j = g^{ij},
\end{equation}
where
\begin{equation}
\check{u}^i = \bar{g}^{ij} u_j,
\end{equation}
we conclude that
\begin{equation}
\sigma^{ik} \sigma^{jl} h_{ij} h_{kl} \leq c |A|^2.
\end{equation}
In view of $\dot\vartheta \Theta^{-1} \leq c$ we conclude that $|D^2 w|^2$ is uniformly bounded.\\
Contract $(\ref{relation-u_ij-h_ij})$ with $g^{ij}$ we conclude further
\begin{equation}
\label{elliptic-eq-w}
- g^{ij} w_{:ij} -\vartheta^{-3} \dot\vartheta \Theta v^{-2} |Dw|^2 + v H \Theta^{-1} + n \vartheta^{-1} \dot\vartheta \Theta^{-1} =0.
\end{equation}
Since $v$ is uniformly bounded, (\ref{elliptic-eq-w}) is a uniformly elliptic equation in $w$ with bounded coefficients.
A bootstrapping procedure with Schauder theory yields for all $m \in \mathbb{N}$
\begin{equation}
|w|_{m, \mathbb{S}^n} \leq c_m \quad \forall t \in [0, T^*).
\end{equation}
\end{proof}
From Lemma \ref{convergence-tilde-u-to-1} and preceding results in Section \ref{section:inverse-curvature-flows} we conclude
\begin{thm}
Let the geodesic polar coordinates $(\tau, \xi^i)$ of $N$ be specified in Section $2$. Represent the inverse curvature flow
$(\ref{inverse-flow-N})$ in $N$ as graphs over $\mathbb{S}^n$,
$M(t)^* = \textrm{graph} \, u^*(t, \cdot)$,
where the curvature function $\tilde{F}$ satisfies the assumption $\ref{general-assmption}$. Then $u^*$ converges to the constant function $0$
in $C^\infty (\mathbb{S}^n)$. The rescaled function $w = u^* \Theta^{-1}$ are uniformly bounded in $C^{\infty} (\mathbb{S}^n)$. When
the curvature function $F$ of the corresponding contracting flow is strictly concave or $F = \frac{1}{n} H$,
then $w(\tau, \cdot)$ converges in $C^{\infty} (\mathbb{S}^n)$ to
the constant function $-1$ exponentially fast. \qed
\end{thm}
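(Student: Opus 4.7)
The three assertions to establish are:
\textbf{(i)} $u^* \to 0$ in $C^\infty(\mathbb{S}^n)$;
\textbf{(ii)} $w = u^* \Theta^{-1}$ is uniformly bounded in $C^\infty(\mathbb{S}^n)$;
\textbf{(iii)} under the additional hypothesis that $F$ is strictly concave or $F = \tfrac{1}{n} H$, $w \to -1$ in $C^\infty(\mathbb{S}^n)$ exponentially fast. The plan is to deduce (i)--(iii) by assembling the preceding lemmas and transferring the convergence statements already proved for the direct flow in $\mathbb{H}^{n+1}$ to the dual flow in $N$ via the extremal-point identities $u_{\max} = -u^*_{\min}$ and $u_{\min} = -u^*_{\max}$.

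For (ii), the uniform $C^\infty(\mathbb{S}^n)$ bounds on $w$ are exactly what the $C^0$, $C^1$ and $C^m$ estimates obtained in the earlier lemmas of this section yield via the dual pinching estimates in $\mathbb{H}^{n+1}$, the uniformly elliptic equation (\ref{elliptic-eq-w}) satisfied by $w$, and a standard bootstrap with Schauder theory. For (i), I would simply note that $u^* = w\Theta$ and that $\Theta(t,T^*) = \mathrm{arccosh}\, e^{T^*-t} \to 0$ as $t \to T^*$; together with (ii) this forces $\|u^*\|_{C^m(\mathbb{S}^n)} \to 0$ for every $m$, i.e.~convergence of $u^*$ to $0$ in $C^\infty(\mathbb{S}^n)$.

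For (iii), I would divide the extremal-point identities (\ref{relation-u-u*-1}), (\ref{relation-u-u*-2}) by $\Theta$ to obtain
\begin{equation*}
\tilde u_{\max} \;=\; -\,w_{\min}, \qquad \tilde u_{\min} \;=\; -\,w_{\max}.
\end{equation*}
Theorem~\ref{convergence-tilde-u-to-1} yields $\tilde u \to 1$ in $C^\infty(\mathbb{S}^n)$ exponentially fast, so in particular $\tilde u_{\max}$ and $\tilde u_{\min}$ converge exponentially to $1$, whence $w_{\max}$ and $w_{\min}$ converge exponentially to $-1$. Since $w_{\min} \leq w(\xi) \leq w_{\max}$ pointwise, this already gives exponential decay of $\|w+1\|_{C^0(\mathbb{S}^n)}$. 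Combining this $C^0$ decay with the uniform $C^\infty$ bounds from (ii) through the interpolation inequalities for $C^m$-norms (cf.~\cite[Corollary 6.2]{Ger11}) yields exponential decay of $\|w+1\|_{C^m(\mathbb{S}^n)}$ for every $m$.

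The only real subtlety is the interpolation step upgrading $C^0$ exponential decay to $C^\infty$ exponential decay; this costs a factor in the exponent but is completely standard given uniform $C^\infty$ bounds. No further estimates specific to the dual flow are required: the entire theorem reduces, through the Gauß-map duality and the two extremal-point identities, to facts already established for the contracting flow in hyperbolic space.
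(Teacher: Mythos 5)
Your proposal is correct and follows essentially the same route as the paper, which proves this theorem precisely by assembling the preceding results of the section: the extremal-value identities \eqref{E:relation-u-u*-1}, \eqref{E:relation-u-u*-2} divided by $\Theta$, the uniform $C^\infty(\mathbb{S}^n)$ bounds on $w$ from the $C^0$, $C^1$ and $C^m$ lemmas (via the dual pinching and the elliptic equation \eqref{E:elliptic-eq-w} with Schauder bootstrap), Theorem \ref{T:convergence-tilde-u-to-1} for the exponential $C^0$ decay of $w+1$, and interpolation to upgrade to $C^m$ decay. Your filling in of the squeeze argument $w_{\min}\le w\le w_{\max}$ and the interpolation step matches the intended argument, which the paper leaves implicit.
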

\enlargethispage{1cm}
\bibliographystyle{hamsplain}
\bibliography{mrabbrev,publications}

\begin{thebibliography}{5}
\bibitem[1]{And94} Ben Andrews,
\emph{Contraction of convex hypersurfaces in Euclidean space},
Calc. Var. Partial Differential Equations \textbf{2} (1994), no.~2, 151--171.
\bibitem[2]{And94-2} \bysame,
\emph{Contraction of convex hypersurfaces in Riemannian spaces},
J. Diff. Geom. \textbf{39} (1994), no.~2, 407--431.
\bibitem[3]{And07} \bysame,
\emph{Pinching estimates and motion of hypersurfaces by curvature functions},
J. Reine Angew. Math. \textbf{608} (2007), 17--33.
\bibitem[4]{Bia09} Baojun Bian and Pengfei Guan,
\emph{A microscopic convexity principle for nonlinear partial differential equations},
Inventiones mathematicae \textbf{177} (2009), 307--335.
\bibitem[5]{Ger06} Claus Gerhardt,
\emph{Curvature Problems}, Series in Geometry and Topology, Vol.~39,
International Press, Somerville, MA, 2006.
\bibitem[6]{Ger11} \bysame,
\emph{Inverse curvature flows in hyperbolic space}, J. Diff. Geom. \textbf{89} (2011), 487--527.
\bibitem[7]{Ger13} \bysame,
\emph{Curvature flows in the sphere}, J. Differential Geom. \textbf{100} (2015), no.~ 2, 301--347.
\bibitem[8]{Ger15} \bysame,
\emph{Pinching estimates for dual flows in hyperbolic and de Sitter space}, preprint, 2015, arXiv:1510.03747.
\bibitem[9]{Hof74} David Hoffman and Joel Spruck,
\emph{Sobolev and Isoperimetric Inequalities for Riemannian Submanifolds}, Comm. Pure Appl. Math. \textbf{27} (1974), 715--727.
\bibitem[10]{Hui84} Gerhard Huisken,
\emph{Flow by mean curvature of convex surfaces into spheres},  J. Diff. Geom. \textbf{20} (1984), no.~1, 237--266.
\bibitem[11]{Hui86} \bysame,
\emph{Contracting convex hypersurfaces in Riemannian manifolds by
  their mean curvature},  Invent. Math. \textbf{84} (1986), 463--480.
\bibitem[12]{Hui99} Gerhard Huisken and Carlo Sinestrari:
\emph{Convexity estimates for mean curvature flow
and singularities of mean convex surfaces},  Acta Math. \textbf{183} (1999), no.~1, 45--70.
\bibitem[13]{Mak12} Matthias Makowski,
\emph{Mixed volume preserving curvature flows in hyperbolic space}, preprint, 2012, arXiv:1208.1898.
\bibitem[14]{Sch12} Julian Scheuer,
\emph{Non-scale-invariant inverse curvature flows in hyperbolic space},  Calc. Var. Partial Differ. Equ. \textbf{53} (2015), no.~ 1-2, 91--123.
\bibitem[15]{Wan14} Guofang Wang and Chao Xia,
\emph{Isoperimetric type problems and Alexandrov - Fenchel type inequalities in the hyperbolic space},  Adv. Math. \textbf{259} (2014), 532--556.
\end{thebibliography}



\end{document}